\newtheorem{theorem}{Theorem}[section]
\newtheorem{proposition}[theorem]{Proposition}
\newtheorem{lemma}[theorem]{Lemma}
\newtheorem{corollary}[theorem]{Corollary}
\theoremstyle{definition}
\newtheorem{definition}[theorem]{Definition}
\newtheorem{example}[theorem]{Example}
\theoremstyle{remark}
\newtheorem{remark}[theorem]{Remark}
\title{On the classification of $3$-dimensional complex hom-Lie algebras}
\def\blfootnote{\xdef\@thefnmark{}\@footnotetext}
\renewcommand\@date
\author{}
\begin{document}

\maketitle

\begin{abstract}
We classify hom-Lie structures with nilpotent twisting map on $3$-dimensional complex Lie algebras, up to isomorphism, and classify all
degenerations in such family. The ideas and techniques presented here can be easily extrapolated to study similar problems in other
algebraic structures and provide different perspectives from where one can tackle classical open problems of interest in rigid Lie algebras.
\end{abstract}

\section{Introduction}

The concept of hom-Lie algebra has its origin in \textit{Quantum Calculus}. In the discrete context,
when one replace the usual way of differentiating functions by a $\sigma$-derivation, it is expected that similar properties, identities and
formulas to the ones in differential calculus appear (e.g. \textit{Twisted Leibniz rule}). In the case of the notion of \textit{hom-Lie algebra}, such definition was introduced in the Daniel Larsson's doctoral thesis, \cite{Larsson}, (see also publications resulting from this thesis, as \cite{HLS})
and it is motivated by some examples of \textit{quantum deformations} of algebras of vector fields which satisfied a \textit{twisted Jacobi identity}
with respect to an appropriate (Lie) bracket operation. Potential applications of such notion were considered in the mentioned thesis
and subsequently in \cite{Larsson2}. Over the last fifteen years, the hom-Lie algebras have become a research topic in algebra and
many papers on such algebraic structures have been dedicated to obtain results about (regular) hom-Lie algebras generalizing notions and ideas well-known
from Lie algebra theory (in some cases, \textit{mutatis mutandis}).

\newpage

Our motivation to study hom-Lie algebras comes from classical open problems in Lie theory, such as a conjecture due to Mich\`{e}le Vergne, which states
that there are no rigid complex nilpotent Lie algebras in the algebraic set of all $n$-dimensional complex Lie algebras (\cite[p. 83]{V}), or more generally the Grunewald-O'Halloran Conjecture: every complex nilpotent Lie algebra is the degeneration of another nonisomorphic Lie algebra.
Both of these problems are relevant for the understanding of the geometry and algebraic properties of Nilpotent Lie groups. For reasons that we will explain in
\textit{Remark} \ref{vergne}, \textit{regular hom-Lie} algebra structures on a Lie algebra can be used to deform it linearly. It is also of interest to study
how much we can say of a Lie algebra about its different hom-Lie algebra structures; which correspond to the solution set of a linear equation system
depending on the Lie algebra. A precursor result about this kind of ideas is the well known Jacobson's theorem concerning Lie algebras
admitting non-singular derivations and its very nice generalization due to Wolfgang Moens (see \cite{Moens}), or more generally, the work of Alice Fialowski, Abror Khudoyberdiyev and Bakhrom Omirov for the Leibniz algebra case in \cite{FialowskiKhudoyberdiyevOmirov}.

In this paper, we tackle the problem of classifying hom-Lie structures on $3$-dimension complex Lie algebras and their degenerations.
We give more importance to the techniques and ideas for doing so, we then focus our attention to present only the case when the twisting map
is a nilpotent operator, although the general case can be obtained in the same way. Of the different ways to obtain the classification, we
have preferred a more practical approach, while avoiding symbolic computations or any no easy verifiable result. As far as degenerations
of the mentioned structures are concerned, we use the classification of orbit closures of $3$-dimensional complex Lie algebras to reduce
our exposition and do a more substantial presentation of the results and techniques, which can be implemented in many other problems of a similar nature
(\cite{F,Rojas}); in fact, we are guided by the forgotten ancient proverb that says

\begin{quote}
    \textit{``Algebras are like persons; to known how an algebra is, we need to observe what your algebra does, likes ...
    and how your algebra behaves towards all things around it''.}
\end{quote}

Clearly results such as \'{E}lie Cartan's criterion for semisimplicity (for solvability), the Jacobson-Moens theorem mentioned earlier,
the work of Dietrich Burde and Manuel Ceballos in \cite{B3}, the results obtained by Valerii Filippov in \cite{Filippov1,Filippov2} or by George Leger and Eugene Luks in \cite{LegerLuks} can be considered as illustrative examples of this saying. Here, we can also mention the  Ji\v{r}\'{i} Hrivn\'{a}k's doctoral thesis \cite{Hrivnak} (and its publications \cite{HJ,NH}) which introduced and studied some Lie algebra invariants defined by slightly modifying the linear equations corresponding to the notions of \textit{derivation} of a Lie algebra and \textit{cocycle} of the adjoint representation, and which have been used effectively to re-examine the classification of Lie algebras and their degenerations in low dimensions.

Originally, the study of degenerations of algebraic structures, especially in the context of Lie algebras or associative algebras,
was largely driven by  applications in Mathematical physics (\cite{NP}), Geometry and Algebra: examples include the solution by Peter Gabriel to a question
of Maurice Auslander concerning associative algebras of finite representation type (\cite{Gabriel}), results by Johannes Grassberger, Alastair King and Paulo Tirao on the homology of $2$-step nilpotent Lie algebras in \cite[Section 4]{GrassbergerKingTirao}, and in Geometry, we might mention Ernst Heintze's result
\cite[Theorem 1]{Heintze} and the Theorem 2.5 in the Milnor's seminal paper \cite{Milnor13}; both of these results use (implicitly) degenerations of Lie algebras to prove existence the Riemannian metrics on Lie groups with certain curvature properties. More recently, Yuri Nikolayevsky and Yurii Nikonorov
have given results on the curvature of solvable Lie groups by using explicitly degenerations (see \cite{NN13}).

At present, there is an overwhelmed proliferation of papers dealing with degenerations of different kinds of (linear) algebraic structures (including some families of Lie algebras), some of these ones are motivated by abstruse incentives and purposes. We expect our techniques would motivate more developments and
different strategies to attempt solving the above-mentioned problems in Lie theory or new potential applications of the notion of hom-Lie algebra in Lie theory.

\section{Preliminaries}

\subsection{Classification of $3$-dimensional complex Lie algebras and their automorphisms}

The classification of low dimensional complex Lie algebras was started earlier by Sophus Lie himself (see \cite[Abtheilung VI, Kap. 28, \S 136]{Lie}). 
Three-dimensional real Lie algebras have been investigated and completely classified by Luigi Bianchi in \cite[\S 198-199]{Bianchi} 
and have distinguished role not only in algebra and geometry, but also in cosmology; which was first noted by the Kurt G\"odel in 1949
(see \cite{Jantzen}). Since then, the classification of the Lie algebras mentioned above have been revised in several different ways by
using methods from representation theory, Lie theory or algebraic geometry (see \cite{Popovych2}, which is an arXiv extended version of  \cite{Popovych1}), or can be studied as an (guided) exercise in a usual Lie algebra course (see for instance  \cite[\textsc{Chapter} I, Section 15, Problems 28-35]{Knapp}).

\begin{theorem}[{\cite[Lemma 2]{B2}}]\label{3Lie}
Every complex Lie algebra of dimension $3$ is isomorphic to one and only one of the Lie algebras in the following list:
  \begin{itemize}
    \item $\mathfrak{L}_{0}$:= $\mathfrak{a}_{3}(\mathbb{C})$, the $3$-dimensional abelian Lie algebra.
    \item $\mathfrak{L}_{1}$:= $\mathfrak{n}_{3}(\mathbb{C})$, the $3$-dimensional Heisenberg Lie algebra: $[e_1,e_2]=e_3$.
    \item $\mathfrak{L}_{2}$:= $\mathfrak{r}_{3}(\mathbb{C})$ : $[e_1,e_2]=e_2,\,[e_1,e_3]=e_2+e_3$.
    \item $\mathfrak{L}_{3}$:= $\mathfrak{r}_{3,1}(\mathbb{C})$: $[e_1,e_2]=e_2,\,[e_1,e_3]=e_3$.
    \item $\mathfrak{L}_{4}$:= $\mathfrak{r}_{3,-1}(\mathbb{C})$, the Poincar\'e algebra $\mathfrak{p}(1,1)$: $[e_1,e_2]=e_2,\,[e_1,e_3]=-e_3$.
    \item $\mathfrak{L}_{5}(z)$:= $\mathfrak{r}_{3,z}(\mathbb{C})$ with $0<|z|<1$ or, $|z|=1$ and $\mathfrak{Im}(z)>0$: $[e_1,e_2]=e_2,\,[e_1,e_3]=ze_3$.
    \item $\mathfrak{L}_{6}$:= $\mathfrak{r}_{3,0}(\mathbb{C}) = \mathfrak{r}_{2}(\mathbb{C}) \oplus \mathbb{C}$: $[e_1,e_2]=e_2$.
    \item $\mathfrak{L}_{7}$:= $\mathfrak{sl}_{2}(\mathbb{C}) \cong \mathfrak{so}_{3}(\mathbb{C})$: $[e_1,e_2]=e_3,\,[e_2,e_3]=e_1,\,[e_3,e_1]=e_2$.
  \end{itemize}
\end{theorem}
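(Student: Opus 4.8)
The plan is to organize the proof by the dimension $d:=\dim[\mathfrak{g},\mathfrak{g}]$ of the derived algebra, which is an isomorphism invariant taking the values $0,1,2,3$, and to show that within each value of $d$ the remaining structure is controlled by a small amount of linear-algebra data. The two extreme values are immediate. If $d=0$ then $\mathfrak{g}$ is abelian and equals $\mathfrak{L}_0$. If $d=3$ then $\mathfrak{g}$ is perfect, hence not solvable; its radical $\mathfrak{r}$ makes $\mathfrak{g}/\mathfrak{r}$ semisimple of positive dimension, and since the smallest complex semisimple Lie algebra has dimension $3$ we get $\mathfrak{r}=0$, so $\mathfrak{g}$ is semisimple of dimension $3$ and therefore $\mathfrak{g}\cong\mathfrak{sl}_2(\mathbb{C})=\mathfrak{L}_7$.

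For $d=1$ I would write $[\mathfrak{g},\mathfrak{g}]=\mathbb{C}z$ and distinguish whether $z$ is central. If $z$ is central, then $\mathfrak{g}$ is two-step nilpotent and the bracket induces an isomorphism $\Lambda^2(\mathfrak{g}/\mathbb{C}z)\cong\mathbb{C}z$ of one-dimensional spaces, which is exactly the Heisenberg relation, giving $\mathfrak{L}_1$. If $z$ is not central, I would pick $x$ with $[x,z]=z$ (possible after rescaling, since $\operatorname{ad}(x)$ maps into $\mathbb{C}z$); an easy change of basis kills the remaining structure constants and splits off a central direction, so that $\mathbb{C}x\oplus\mathbb{C}z\cong\mathfrak{r}_2(\mathbb{C})$ and $\mathfrak{g}\cong\mathfrak{r}_2(\mathbb{C})\oplus\mathbb{C}=\mathfrak{L}_6$. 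These two are told apart by nilpotency, equivalently by whether the centre meets $[\mathfrak{g},\mathfrak{g}]$.

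The case $d=2$ is the heart of the classification. Here $\mathfrak{g}/[\mathfrak{g},\mathfrak{g}]$ is one-dimensional, so $\mathfrak{g}$ is solvable; by the standard corollary of Lie's theorem the derived algebra $[\mathfrak{g},\mathfrak{g}]$ is then nilpotent, and a two-dimensional nilpotent Lie algebra is abelian. Choosing $x\notin[\mathfrak{g},\mathfrak{g}]$ and writing $\mathfrak{g}=\mathbb{C}x\oplus[\mathfrak{g},\mathfrak{g}]$, the whole bracket is encoded by the operator $\phi:=\operatorname{ad}(x)|_{[\mathfrak{g},\mathfrak{g}]}$, and since $[\mathfrak{g},\mathfrak{g}]=\phi([\mathfrak{g},\mathfrak{g}])$ this $\phi$ is invertible, i.e.\ $\phi\in GL_2(\mathbb{C})$; the Jacobi identity imposes no further condition because $[\mathfrak{g},\mathfrak{g}]$ is abelian. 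Replacing $x$ by $cx+v$ (with $c\neq0$, $v\in[\mathfrak{g},\mathfrak{g}]$) scales $\phi$ by $c$, and changing the basis of $[\mathfrak{g},\mathfrak{g}]$ conjugates $\phi$, so isomorphism classes correspond to conjugacy classes of invertible $2\times2$ complex matrices up to a nonzero scalar. Reading off the Jordan forms then produces the list: a single nondiagonalizable Jordan block gives $\mathfrak{L}_2$, while a diagonalizable $\phi$ with eigenvalues normalized to $1$ and $z$ gives $\mathfrak{r}_{3,z}(\mathbb{C})$, specializing to $\mathfrak{L}_3$ when $z=1$, to $\mathfrak{L}_4$ when $z=-1$, and to the family $\mathfrak{L}_5(z)$ otherwise.

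Finally I would verify that the representatives obtained are pairwise non-isomorphic. Across different values of $d$ this is automatic, and for $d=1$ nilpotency separates $\mathfrak{L}_1$ from $\mathfrak{L}_6$. The only genuinely delicate bookkeeping, and the step I expect to be the main obstacle, is the fine structure of the family $\mathfrak{L}_5(z)$: since the two eigenvalues of $\phi$ are determined only as an unordered pair and only up to common rescaling, the invariant is the class of $z$ under $z\mapsto z^{-1}$ (equivalently the value $z+z^{-1}$), and one must check that the prescribed region $\{\,0<|z|<1\,\}\cup\{\,|z|=1,\ \mathfrak{Im}(z)>0\,\}$ meets each such orbit in exactly one point. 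This amounts to exhibiting a fundamental domain for the inversion $z\mapsto z^{-1}$ on $\mathbb{C}^{\ast}$ and confirming that its fixed points $z=1$ and $z=-1$ (which give $\mathfrak{L}_3$ and $\mathfrak{L}_4$) are correctly excluded from the $\mathfrak{L}_5$ range. Everything else reduces to routine linear algebra together with the invariance of $d$.
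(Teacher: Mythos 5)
Your proof is correct. The only step taken on faith is the identification of a three-dimensional semisimple complex Lie algebra with $\mathfrak{sl}_2(\mathbb{C})$ in the case $d=3$: as written this invokes the rank-one classification, though a short direct argument exists (by Engel some $\operatorname{ad}(x)$ is non-nilpotent, since $\mathfrak{g}$ is perfect; an $\operatorname{ad}(x)$-eigenvector with nonzero eigenvalue then generates an $\mathfrak{sl}_2$-triple) — a citation, not a gap. Everything else checks out on inspection: in the $d=1$ non-central case, with $[x,z]=z$, $[x,y]=\alpha z$, $[y,z]=\beta z$, the element $y-\alpha z-\beta x$ is indeed central; in the $d=2$ case the reduction to an invertible $\phi\in\operatorname{GL}_2(\mathbb{C})$ up to conjugacy and nonzero scalars is the right normal-form problem, and your fundamental-domain check for $z\mapsto z^{-1}$ is consistent with the paper's remark that $\mathfrak{r}_{3,z}\cong\mathfrak{r}_{3,\widetilde{z}}$ if and only if $(z-\widetilde{z})(z\widetilde{z}-1)=0$.

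As to comparison: the paper does not prove Theorem \ref{3Lie} at all — it is imported from \cite{B2} — but Proposition \ref{almost3Lie} is its internal restatement of the same data, and it packages the argument differently from yours. There \emph{every} solvable case, not only $d=2$, is presented uniformly as an almost abelian algebra $\mathfrak{r}_A$ with $A\in\operatorname{End}(\mathbb{C}^2)$, and the isomorphism type is read off from $\operatorname{Trace}(A)$, $\operatorname{Det}(A)$ and whether $\operatorname{Trace}(A)^2=4\operatorname{Det}(A)$ — exactly your ``conjugacy class of $\phi$ modulo a scalar'' invariants, but extended to singular $A$, so that the abelian, Heisenberg and $\mathfrak{r}_2\oplus\mathbb{C}$ algebras (your ad hoc $d\leq 1$ analysis) fall under the same roof. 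The uniform almost-abelian picture buys scalar polynomial invariants that replace Jordan-form and derived-series casework, and it is what the rest of the paper actually computes with; your stratification by $\dim[\mathfrak{g},\mathfrak{g}]$ buys a self-contained elementary proof, at the cost of treating $d\leq 1$ separately — and both routes must handle the simple algebra $\mathfrak{so}_3(\mathbb{C})\cong\mathfrak{sl}_2(\mathbb{C})$ outside the solvable framework.
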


\begin{remark}
  It is important to note that $\mathfrak{r}_{3,z} \cong \mathfrak{r}_{3,\widetilde{z}}$
  if and only if $(z-\widetilde{z})(z\widetilde{z}-1) = 0$.
\end{remark}

\begin{definition}
An \textit{almost Abelian Lie algebra} is a Lie algebra with a codimension one Abelian ideal.
\end{definition}

It is shown by Dietrich Burde and Manuel Ceballos in \cite[Proposition 3.1]{B3} that
a Lie algebra $\mathfrak{g}$  is almost Abelian if and only if $\mathfrak{g}$ has a codimension one Abelian \textbf{subalgebra}.
Any $n+1$-dimensional almost Abelian Lie algebra is in particular a solvable Lie algebra and can be identified with a square matrix
of size $n$. In fact, if $\mathfrak{g}$ is an almost Abelian Lie algebra, $\mathfrak{h}$ is a codimension $1$ Abelian ideal of $\mathfrak{g}$ and
$\operatorname{span}_{\mathbb{C}}(v_0)$ is a linear complement of $\mathfrak{h}$, then $\mathfrak{g}$ is determined by
the linear endomorphism $A=\operatorname{ad}(v_0)|_{\mathfrak{h}}$ ($\mathfrak{g} \cong \operatorname{Span}_{\mathbb{C}}(v_0) \ltimes \mathfrak{h}$). Conversely, given a linear endomorphism $A$ of $\mathbb{C}^{n}$, it is defined the almost Abelian Lie algebra $\mathfrak{r}_{A}$ by
the Lie algebra structure on the vector space $\mathbb{C}A \oplus \mathbb{C}^{n}$ satisfying $[A,v]=Av$
for any $v \in \mathbb{C}^{n}$ and $[v,w]=0$ for any $v,w$ in $\mathbb{C}^{n}$.

The following result is a rewrite of Theorem \ref{3Lie} and will be used throughout the paper.

\begin{proposition}\label{almost3Lie}
  Let $\mathfrak{g}$ be a $3$-dimensional complex Lie algebra. Then $\mathfrak{g}$ is a simple Lie algebra
  or $\mathfrak{g}$ is a solvable Lie algebra. Moreover, if $\mathfrak{g}$ is simple then $\mathfrak{g}$
  is isomorphic to $\mathfrak{so}(3,\mathbb{C})$ ($\cong \mathfrak{sl}(2,\mathbb{C})$), and
  if $\mathfrak{g}$ is solvable then $\mathfrak{g}$ is isomorphic to an almost Abelian Lie algebra
  $\mathfrak{r}_{A}$, which is isomorphic to
  \begin{itemize}
   \item the $3$-dimensional Abelian Lie algebra if and only if $A=0$,
   \item the Heisenberg Lie algebra if and only if $A\neq0$, $\operatorname{Det}(A) =0$ and $\operatorname{Trace}(A)=0$,
   \item $\mathfrak{r}_{3,1}(\mathbb{C})$ if and only if $A = t\operatorname{Id}$ for some $t \in \mathbb{C}^{\star}$.
   \item $\mathfrak{r}_{3}(\mathbb{C})$ if and only if $A \neq t\operatorname{Id}$ for all $t \in \mathbb{C}$,
   $\operatorname{Trace}(A)^2 = 4 \operatorname{Det}(A)$ and $\operatorname{Trace}(A)\neq 0$,
   \item $\mathfrak{r}_{3,-1}(\mathbb{C})$ if and only if $\operatorname{Trace}(A) = 0$ and $ \operatorname{Det}(A) \neq0$,
   \item $\mathfrak{r}_{2}(\mathbb{C}) \times \mathbb{C}$ if and only if $\operatorname{Trace}(A) \neq 0$ and $ \operatorname{Det}(A)  = 0$,
   \item $\mathfrak{r}_{3,z}(\mathbb{C})$ for some $z \in \mathbb{C}$ such that $z(z^2-1)\neq0$ if and only if $\operatorname{Trace}(A)^2 \neq 4 \operatorname{Det}(A)$,           $\operatorname{Trace}(A) \neq 0$ and $ \operatorname{Det}(A) \neq0$.
  \end{itemize}

\end{proposition}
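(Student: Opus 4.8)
The plan is to derive the statement entirely from Theorem~\ref{3Lie} together with the almost-Abelian description recalled just above, thereby reducing the classification to elementary linear algebra of $2\times 2$ complex matrices.

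For the dichotomy I would read it directly off Theorem~\ref{3Lie}: the algebras $\mathfrak{L}_0,\dots,\mathfrak{L}_6$ are solvable (in each the derived series reaches $0$), while $\mathfrak{L}_7=\mathfrak{sl}_2(\mathbb{C})\cong\mathfrak{so}_3(\mathbb{C})$ is simple. Equivalently, from the Levi decomposition $\mathfrak{g}=\mathfrak{s}\ltimes\operatorname{rad}(\mathfrak{g})$ and the fact that a nonzero semisimple complex Lie algebra has dimension at least $3$, with equality only for $\mathfrak{sl}_2(\mathbb{C})$, one concludes that either the Levi factor $\mathfrak{s}$ vanishes, whence $\mathfrak{g}$ is solvable, or $\mathfrak{s}=\mathfrak{g}$ is simple and isomorphic to $\mathfrak{sl}_2(\mathbb{C})$.

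For the solvable case, each $\mathfrak{L}_i$ with $0\le i\le 6$ has $\mathfrak{h}=\operatorname{span}_{\mathbb{C}}(e_2,e_3)$ as a codimension-one Abelian ideal, so $\mathfrak{L}_i=\mathfrak{r}_{A_i}$ with $A_i=\operatorname{ad}(e_1)|_{\mathfrak{h}}$; reading off the brackets produces the representatives $A_i$, and I would record $\operatorname{Trace}(A_i)$ and $\operatorname{Det}(A_i)$ for each. The only structural input needed is the easy implication that if $B=c\,gAg^{-1}$ for some $c\in\mathbb{C}^{\star}$ and $g\in GL_2(\mathbb{C})$, then $\mathfrak{r}_A\cong\mathfrak{r}_B$: the map sending $v_0^A\mapsto c^{-1}v_0^B$ and acting as $g$ on $\mathfrak{h}$ is a Lie isomorphism, since $[\,c^{-1}v_0^B,\,gv\,]=c^{-1}Bgv=gAv=g\,[\,v_0^A,\,v\,]$, the middle equality being precisely the relation $B=c\,gAg^{-1}$. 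It then suffices to sort an arbitrary $A\in M_2(\mathbb{C})$ by Jordan type modulo conjugation and nonzero scaling. Since $\operatorname{Trace}$ scales linearly and $\operatorname{Det}$ quadratically under $A\mapsto c\,gAg^{-1}$, the vanishing of each and the relation $\operatorname{Trace}(A)^2=4\operatorname{Det}(A)$ (which detects a repeated eigenvalue) are genuine invariants, and I would run through the seven cases: $A=0$; nonzero nilpotent; nonzero scalar; non-scalar with a repeated nonzero eigenvalue; diagonalisable with opposite nonzero eigenvalues; diagonalisable with exactly one zero eigenvalue; and diagonalisable with distinct nonzero eigenvalues of nonzero sum. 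In each case Jordan form shows $A$ is conjugate up to a scalar to the matching representative $A_i$ (in the last case to $\operatorname{diag}(1,z)$ with $z(z^2-1)\neq0$), so the easy implication gives $\mathfrak{r}_A\cong\mathfrak{L}_i$, and a direct computation confirms that the stated condition on $(\operatorname{Trace},\operatorname{Det})$ characterises that case.

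Finally, the converse directions of each ``if and only if'' are immediate: the seven conditions partition $M_2(\mathbb{C})$, so every $A$ satisfies exactly one, and because $\mathfrak{L}_0,\dots,\mathfrak{L}_6$ are pairwise non-isomorphic by Theorem~\ref{3Lie}, $\mathfrak{r}_A$ can be isomorphic to $\mathfrak{L}_j$ only when $A$ lies in the $j$-th class. I expect the main obstacle to be the bookkeeping in establishing mutual exclusivity and exhaustiveness of these conditions on the pair $(\operatorname{Trace}(A),\operatorname{Det}(A))$, refined by the scalar/non-scalar and diagonalisable/non-diagonalisable dichotomies---in particular verifying that $\operatorname{Trace}(A)^2=4\operatorname{Det}(A)$ together with non-scalarness isolates exactly the single-Jordan-block case $\mathfrak{r}_3(\mathbb{C})$---but this reduces to a finite and elementary check.
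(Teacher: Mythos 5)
Your proposal is correct and follows essentially the route the paper itself intends: the paper states Proposition \ref{almost3Lie} as a ``rewrite'' of Theorem \ref{3Lie}, and you simply make that rewrite explicit by exhibiting each solvable $\mathfrak{L}_i$ as $\mathfrak{r}_{A_i}$ with $A_i=\operatorname{ad}(e_1)|_{\operatorname{span}_{\mathbb{C}}(e_2,e_3)}$ and sorting arbitrary $A\in M_2(\mathbb{C})$ by Jordan type modulo conjugation and nonzero scaling, using the (correctly verified) lemma that $B=c\,gAg^{-1}$ implies $\mathfrak{r}_A\cong\mathfrak{r}_B$. Your case analysis on $(\operatorname{Trace}(A),\operatorname{Det}(A))$ refined by the scalar/non-scalar dichotomy is exhaustive and mutually exclusive, and the converse-via-partition argument is sound, so the proof is complete.
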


The automorphisms of all three-dimensional real Lie algebras, as far as we know, were first investigated by Alex Harvey in \cite{Harvey}, where it is given
the identity component of each group.

\begin{proposition}\label{autoLie3}
The automorphism group of each Lie algebra given in Theorem \ref{3Lie} is given in the following list,
by identifying an automorphism with its matrix representation in the ordered basis $\{e_1, e_2, e_3\}$:

\begin{center}
    \begin{tabular}{c : p{3cm} : c : p{3cm}}
     Lie algebra & Automorphism& Lie algebra & \multicolumn{1}{c}{Automorphism}\\
    \hdashline
      $\mathfrak{a}_{3}(\mathbb{C})$ &  any $g \in \operatorname{GL}(3,\mathbb{C})$  &
      $\mathfrak{so}_{3}(\mathbb{C})$ &  any $g \in \operatorname{SO}(3,\mathbb{C})$\\
    \hdashline
      $\mathfrak{n}_{3}(\mathbb{C})$ & $\left(\begin{array}{cc | c} a & b & 0 \\ c & d & 0 \\ \hline x & y & ad-bc \\ \end{array}\right)$ &
    $\mathfrak{r}_{3,1}(\mathbb{C})$ &  $\left(\begin{array}{c | cc} 1 & 0 & 0 \\ \hline x & a & b \\ y & c & d \\ \end{array}\right)$\\
    \hdashline
      $\mathfrak{r}_{3}(\mathbb{C})$ &  $\left(\begin{array}{c| cc} 1 & 0 & 0 \\ \hline x & a & w \\ y & 0 & a \\ \end{array}\right)$ &
      \begin{tabular}{c}
        $\mathfrak{r}_{3,z}(\mathbb{C})$ \\ $(z^2-1)\neq0$ \end{tabular} &  $\left(\begin{array}{c | cc} 1 & 0 & 0 \\ \hline x & a & 0 \\ y & 0 & b \\ \end{array}\right)$\\
    \hdashline
      $\mathfrak{r}_{3,-1}(\mathbb{C})$ & \multicolumn{3}{l}{  $\left(\begin{array}{c | cc} 1 & 0 & 0 \\ \hline x & a & 0 \\ y & 0 & b \\ \end{array}\right)$
      or
      $\left(\begin{array}{c | cc} -1 & 0 & 0 \\ \hline x & 0 & a \\ y & b & 0 \\ \end{array}\right)$ }
      \\
    \hdashline
    \end{tabular}
\end{center}
In all cases the complex matrix must be non singular.

\end{proposition}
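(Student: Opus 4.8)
The plan is to treat an automorphism $\phi$ as an invertible linear map satisfying $\phi([x,y])=[\phi x,\phi y]$, to record it as a matrix $g=(g_{ij})$ in the basis $\{e_1,e_2,e_3\}$ (so the $j$-th column is $\phi(e_j)$), and to impose the defining brackets of each algebra in Theorem \ref{3Lie} to read off the constraints on the entries. Rather than expand the full nonlinear system each time, I would first cut down the freedom using characteristic subspaces: every automorphism preserves the center, the derived algebra, and the nilradical. The abelian algebra $\mathfrak a_3(\mathbb C)$ is then immediate, since all brackets vanish and every $g\in\operatorname{GL}(3,\mathbb C)$ is an automorphism.

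For the simple algebra $\mathfrak{so}_3(\mathbb C)$, whose bracket is the cross product on $\mathbb C^3$ (i.e. $[e_i,e_j]=\varepsilon_{ijk}e_k$), I would invoke the identity $(gx)\times(gy)=\det(g)\,(g^{-1})^{\mathsf T}(x\times y)$. The automorphism condition $g(x\times y)=(gx)\times(gy)$ must hold for all $x,y$; since $x\times y$ ranges over all of $\mathbb C^3$, this forces $g=\det(g)\,(g^{-1})^{\mathsf T}$, that is $g^{\mathsf T}g=\det(g)\operatorname{Id}$. Taking determinants yields $\det(g)^2=\det(g)^3$, hence $\det(g)=1$, and then $g^{\mathsf T}g=\operatorname{Id}$; thus $g\in\operatorname{SO}(3,\mathbb C)$, and conversely every such $g$ is an automorphism.

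The solvable cases I would organize through the almost-Abelian picture of Proposition \ref{almost3Lie}: write $\mathfrak g=\mathbb C e_1\ltimes\mathfrak h$ with $\mathfrak h=\operatorname{span}(e_2,e_3)$ the (characteristic) nilradical and $A=\operatorname{ad}(e_1)|_{\mathfrak h}$. An automorphism then satisfies $\phi(\mathfrak h)=\mathfrak h$ and $\phi(e_1)=p\,e_1+h$ with $p\neq0$ and $h\in\mathfrak h$; applying $\phi$ to $[e_1,v]=Av$ for $v\in\mathfrak h$ and using that $\mathfrak h$ is Abelian yields the key intertwining relation $BA=pAB$, where $B=\phi|_{\mathfrak h}$. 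So $A$ and $pA$ are conjugate, which pins down the admissible $p$ from the eigenvalues of $A$, and then $B$ is forced to lie in the appropriate (twisted) centralizer. I would run this through each eigenvalue type: $A=\operatorname{Id}$ gives $p=1$ with $B\in\operatorname{GL}(2,\mathbb C)$ arbitrary ($\mathfrak r_{3,1}$); a single Jordan block gives $p=1$ with $B$ upper triangular of equal diagonal entries ($\mathfrak r_3$); distinct eigenvalues $\{1,z\}$ with $z^2\neq1$ force $p=1$ and $B$ diagonal ($\mathfrak r_{3,z}$, including $z=0$). The Heisenberg algebra I would treat separately via its one-dimensional center $=[\mathfrak g,\mathfrak g]=\operatorname{span}(e_3)$: writing $\phi(e_3)=\lambda e_3$ and expanding $\phi(e_3)=[\phi e_1,\phi e_2]$ shows that $\lambda$ equals the determinant of the map induced by $\phi$ on $\mathfrak g/Z$, namely the $(3,3)$-entry $ad-bc$ of the stated matrix.

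The main obstacle, and the place where the classification genuinely branches, is the relation $BAB^{-1}=pA$ in the case $\mathfrak r_{3,-1}$, where $A=\operatorname{diag}(1,-1)$ so that $\{1,-1\}$ is invariant under negation. Here the eigenvalue matching $\{1,-1\}=\{p,-p\}$ admits both $p=1$ (giving $B$ diagonal, the first matrix family) and $p=-1$ (forcing $B$ to interchange the two eigenlines, i.e. $B$ anti-diagonal, the second family with $-1$ in the corner). Verifying that these are the only two components — and, dually, that the analogous second pairing $p=z$, $pz=1$ is excluded precisely when $z^2\neq1$ — is exactly what distinguishes $\mathfrak r_{3,-1}$ from the generic $\mathfrak r_{3,z}$ and accounts for the two-matrix description. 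Once the admissible pairs $(p,B)$ are determined in each case, I would close the argument by checking the converse, namely that every matrix of the listed shape does preserve the defining brackets, which is a direct computation.
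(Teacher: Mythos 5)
Your proposal is correct, but it cannot be compared line-by-line with an argument in the paper, because the paper offers none: Proposition \ref{autoLie3} is stated as a known fact, with a pointer to Harvey's computation of the automorphism groups of the real three-dimensional (Bianchi) Lie algebras, and the complex verification is left implicit. What you do differently is supply a uniform, self-contained proof. The abelian case is immediate; for $\mathfrak{so}_{3}(\mathbb{C})$ the identity $(gx)\times(gy)=\det(g)\,(g^{-1})^{\mathsf T}(x\times y)$ reduces the automorphism condition to $g^{\mathsf T}g=\det(g)\operatorname{Id}$, whence $\det(g)=1$ and $g\in\operatorname{SO}(3,\mathbb{C})$, matching the table; and for the solvable algebras the almost-abelian normal form $\mathfrak{g}=\mathbb{C}e_1\ltimes\mathfrak{h}$ of Proposition \ref{almost3Lie}, together with the observation that $\mathfrak{h}=\operatorname{span}_{\mathbb{C}}\{e_2,e_3\}$ is the nilradical (hence characteristic) in every non-nilpotent case, yields the intertwining relation $BAB^{-1}=pA$ with $p\neq0$, after which spectral matching of $\{1,z\}$ with $\{p,pz\}$ pins down $p$ and places $B$ in the appropriate (twisted) centralizer: $p=1$ with $B$ arbitrary for $A=\operatorname{Id}$; $p=1$ with $B=a\operatorname{Id}+wN$ for the Jordan block of $\mathfrak{r}_{3}(\mathbb{C})$; $p=1$ with $B$ diagonal when $z^2\neq1$ (including $z=0$, i.e.\ $\mathfrak{r}_{2}(\mathbb{C})\times\mathbb{C}$); and the extra component $p=-1$ with anti-diagonal $B$ exactly when $z=-1$, which is the one genuinely branching case and which you isolate correctly. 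Your separate treatment of $\mathfrak{n}_{3}(\mathbb{C})$ is not merely convenient but necessary: there the nilradical is the whole algebra, $\operatorname{span}_{\mathbb{C}}\{e_2,e_3\}$ is not characteristic (e.g.\ the automorphism swapping $e_1$ and $e_2$ does not preserve it), so the intertwining argument is unavailable, and passing to the characteristic center $\operatorname{span}_{\mathbb{C}}\{e_3\}=[\mathfrak{g},\mathfrak{g}]$ and reading $\phi(e_3)=(ad-bc)e_3$ off $[\phi e_1,\phi e_2]$ is the right substitute. Compared with the paper's citation, your route buys a transparent structural explanation of why $\mathfrak{r}_{3,1}$ and $\mathfrak{r}_{3,-1}$ have strictly larger automorphism groups than the generic $\mathfrak{r}_{3,z}$ (the alternative pairing $p=z$, $pz=1$ is available only when $z^2=1$), at the cost of the routine converse verification, which you rightly flag, that every matrix of the listed shape preserves the defining brackets.
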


\subsection{hom-Lie algebras}

\begin{definition}[{\cite[Definition 14]{HLS}}, {\cite[Definition 1.3]{MS1}} and {\cite[Definition 2.1]{Sheng}}]

  A complex \textit{hom-Lie algebra} is a triple $(V,\cdot, A)$ where $V$ is a complex vector space,
  $(V,\cdot)$ is a \textit{skew-symmetric algebra} (which is the same as an \textit{anti-commutative algebra}), i.e.
  \begin{eqnarray*}
    x\cdot y & = & - y \cdot x,\, \forall x,y \in V
  \end{eqnarray*}
   and $A:V\rightarrow V$ is a linear transformation, called the \textit{twisting map}, satisfying the \textit{hom-Jacobi identity}:
   \begin{eqnarray}
   \operatorname{Jac}_{(\cdot,A)}(x_1,x_2,x_3) & = & \sum_{\sigma \in S_{3}} \operatorname{sign}(\sigma)Ax_{\sigma(1)}\cdot (x_{\sigma(2)} \cdot x_{\sigma(3)})\\
\nonumber                              & = & 0.
   \end{eqnarray}
   for any $x_1, x_2$ and $x_3$ in $V$. Here, $S_3$ is the permutation group of degree $3$ and $\operatorname{sign}( \sigma)$ is the signum of a permutation $\sigma$.
   If $A$ is also an algebra homomorphism of $(V,\cdot)$, i.e.
   \begin{eqnarray*}
    A(x\cdot y) = Ax\cdot Ay,\, \forall x,y \in V,
   \end{eqnarray*}
   then the hom-Lie algebra $(V,\cdot,A)$ is called \textit{multiplicative}. A \textit{regular hom-Lie algebra} is one for which
   the twisting map is an algebra automorphism.
\end{definition}

\begin{remark}
  Note that any Lie algebra defines a hom-Lie algebra by taking the twisting map to be the identity map.
  More generally, as it is noted in \cite[Corollary 2.6]{Yau}, it is easy to check that if $( V , \cdot)$ is a skew-symmetric algebra and $A$ is
  an algebra automorphism of $( V , \cdot)$, then $( V , \cdot , A)$ is a hom-Lie algebra
  if and only if $(V,[-,-])$ is a Lie algebra where $[-,-]$ is defined by
  \begin{eqnarray}
\label{asociada}    [x,y] & := & A^{-1}(x\cdot y),\, \forall x,y \in V.
  \end{eqnarray}

  Note that any regular hom-Lie algebra $(V,\cdot,A)$ and its associated Lie algebra $(V,[-,-])$, as it is defined in (\ref{asociada}),
  share so many algebraic properties. For instance, the most obvious observations are that $A$ is an algebra automorphism
  of both algebras $(V,\cdot)$ and $(V,[-,-])$ and also, such algebras have the same \textit{lower central series}
  and the \textit{derived series}.
 \end{remark}

\begin{remark}\label{vergne}
  Our primary interest in hom-Lie structures on Lie algebras is mainly motivated by
  the apparent relation between the hom-Jacobi identity and \textit{deformations of Lie algebras}.
  In fact, given a Lie algebra $(V,[-,-])$, we can naturally consider the vector space
  \begin{eqnarray}
     Z:=\left \{A \in \mathfrak{gl}(V) : \sum_{\sigma \in S_{3}} \operatorname{sign}(\sigma) [x_{\sigma(1)}, A[x_{\sigma(2)} , x_{\sigma(3)}]] = 0 \right\}.
  \end{eqnarray}
  It is easy to check that any $A\in Z$ defines a deformation of the Lie algebra $(V,[-,-])$ given by $\varphi:=A[-,-]$. Even better,
  the skew-symmetric bilinear map $\varphi$ is a \textit{trivial solution} to the \textit{Maurer-Cartan (deformation) equation} of the Lie algebra $(V,[-,-]=:\mu)$
  \begin{eqnarray}
  \delta_{\mu} \varphi + \frac{1}{2}[\![\varphi,\varphi ]\!]=0,
   \end{eqnarray}
  since $\varphi$ is a Lie bracket on $V$ and is a 2-cocycle for the adjoint representation of the Lie algebra $(V,\mu)$;
  and so $\mu + t\varphi$ is a Lie bracket on $V$ for all $t\in \mathbb{C}$.

  If $A$ is an algebra automorphism of $(V,[-,-]=\mu)$, then $A\in Z$ if and only if $(V,[-,-],A^{-1})$ is a regular hom-Lie algebra.
\end{remark}

\begin{definition}\label{derivation}
A \textit{derivation} of a hom-Lie algebra $(V,\cdot,A)$ is a linear transformation $D:V\rightarrow V$ such that
$D$ is a usual derivation of the algebra $(V,\cdot)$, i.e.
\begin{eqnarray}
  D (x\cdot y) & = &(Dx)\cdot y + x\cdot (Dy),\, \forall x,y \in V,
\end{eqnarray}
and also, $D$ commutes with the twisting map $A$. We denote by $\operatorname{Der}(V,\cdot,A)$
the Lie algebra of all derivations of the hom-Lie algebra $(V,\cdot,A)$.
\end{definition}

\begin{remark}
  In \cite[Definition 1.7]{Pasha}, it is introduced a well-founded notion of \textit{$A$-derivation of a hom-Lie algebra} $(V,\cdot,A)$;
  among others substantiated definitions so that these notions can be analogous concepts from Lie algebra theory.
  Thus, in \cite[Section 5]{Pasha}, the authors ask whether the Jacobson's result about invertible derivations
  is true for hom-Lie algebras. In fact, it is easy to check that if a regular hom-Lie algebra $(V,\cdot,A)$ admits an invertible $A$-derivation $D$,
  then $(V,\cdot,A)$ must be nilpotent (in the sense described in \cite[Definition 3.1]{Pasha}), since $A^{-1}D$ is a invertible
  derivation of the Lie algebra $(V,[-,-])$ with $[-,-]$ as it is defined in Equation (\ref{asociada}). But it is not true in general:
  consider the Lie algebra $\mathfrak{r}_{2} \times \mathbb{C}$ with multiplicative hom-Lie algebra structure
  given by the twisting map $A = \operatorname{diag}(1,0,1)$ and $A$-derivation defined by $D = \operatorname{diag}(1,1,1)$;
  here $A$ and $D$ are written with respect to the ordered basis $\{e_1,e_2,e_3\}$ given in Theorem \ref{3Lie}.

  In our case, the definition given in \ref{derivation} is  different from that of  \cite[Definition 1.7]{Pasha}
  and is related to what could be the \textit{automorphism group} of a hom-Lie algebra.
\end{remark}

\begin{definition}[{\cite[Pag. 331]{HLS}},{\cite[Section 2]{Sheng}}]\label{morfismo}
If $(V,\cdot,A)$ and $(U,\ast,B)$ are hom-Lie algebras,
\textit{a homomorphism of hom-Lie algebras from $(V,\cdot,A)$ to $(U,\ast,B)$ }
is a linear transformation $g:V \rightarrow U$ such that $g$ is an algebra homomorphism
between $(V,\cdot)$ and $(U,\ast)$, i.e.
\begin{eqnarray*}
  g(x\cdot y) = (gx)\ast(gy),\, \forall x\,y \in V,
\end{eqnarray*}
and also
\begin{eqnarray*}
  g \circ A = B \circ g.
\end{eqnarray*}
An invertible hom-Lie algebra homomorphism is called a \textit{hom-Lie algebra isomorphism}.
\end{definition}

From now on, we identify our $n$-dimensional complex vector space $V$ with $\mathbb{C}^{n}$
and we denote by $\{e_1,e_2,\ldots, e_n\}$ the canonical basis of $\mathbb{C}^{n}$.

\begin{definition}
Let $W$ be the vector space $C^{2}\times C^{1}$ where $C^{k}$ (with $k\geq 1$) is the vector space
\begin{eqnarray*}
\left \{ \varphi: \mathbb{C}^{n}  \times \ldots \times \mathbb{C}^{n} \rightarrow \mathbb{C}^{n} : \varphi \mbox{ is an alternating multilinear map of $k$ variables}  \right\},
\end{eqnarray*}
i.e. $C^{k} \cong \operatorname{Hom}(\Lambda^{k}\mathbb{C}^{n},\mathbb{C}^n) \cong \Lambda^{k}\mathbb{C}^{n} \otimes \mathbb{C}^n$.

We denote by $\operatorname{hom}\text{-}\mathcal{L}_{n}(\mathbb{C})$  the subset of $W$ of all hom-Lie algebra structures on $\mathbb{C}^{n}$.
\end{definition}
Note that $\operatorname{hom}\text{-}\mathcal{L}_{n}(\mathbb{C})$ is an affine algebraic subset of $W$. If we consider
structure constants of a element $(\mu,A)$ in $W$, $\mu(e_i,e_j) = \sum c_{i,j}^{k}e_k$ and $Ae_l = \sum a_{h,l}e_h$,
we have $(\mu,A) \in \operatorname{hom}\text{-}\mathcal{L}_{n}(\mathbb{C})$ if and only if  $\{c_{i,j}^{k}\} \cup \{a_{h,l}\}$
satisfy the polynomial relations determined by the hom-Jacobi identity
\begin{eqnarray*}
  \sum_{p,q =1}^{n} (a_{p,i}c_{j,k}^{q} + a_{p,j}c_{k,i}^{q}+a_{p,k}c_{i,j}^{q})c_{p,q}^{l} = 0.
\end{eqnarray*}
for all $1\leq i,j,k,l \leq n$. In practice, it is very easy to determine whether a given pair $(\mu,A) \in W$ is a hom-Lie algebra structure on $\mathbb{C}^{n}$, since the map $\operatorname{Jac}_{\mu,A}$ is an alternating tensor, and so we only need
to check that $\operatorname{Jac}_{\mu,A}(e_i,e_j,e_k) = 0$ for any $i,j,k$ with $1\leq i<j<k \leq n$ (one straightforward verification is enough in dimension $3$).

Recall that the complex general linear group $\operatorname{GL}(n,\mathbb{C})$ acts on $C^{k}$ ($k\geq 1$)
via \textit{change of basis}, i.e., given $\varphi \in C^{k}$
\begin{eqnarray}
  g \bullet \varphi(x_1,x_2,\ldots,x_k) := g\varphi(g^{-1}x_1, g^{-1}x_2,\ldots,g^{-1}x_k)
\end{eqnarray}
for $g \in \operatorname{GL}(n,\mathbb{C})$ and $x_1,x_2,\ldots, x_k \in \mathbb{C}^{n}$.
And so, $\operatorname{GL}(n,\mathbb{C})$ acts on the vector space $W$ in a natural way and $\operatorname{hom}\text{-}\mathcal{L}_{n}(\mathbb{C})$
is a $\operatorname{GL}(n,\mathbb{C})$-invariant subset of $W$. The \textit{orbit} of a point $(\mu,A)$ in $W$,
denoted by $O(\mu,A)$ (or by $\operatorname{GL}(n,\mathbb{C}) \bullet  (\mu,A)$) is the set of images of $(\mu,A)$ under the action by elements of $\operatorname{GL}(n,\mathbb{C})$:
\begin{eqnarray*}
  O(\mu,A) &: =& \left\{ g\bullet (\mu,A) : g \in \operatorname{GL}(n,\mathbb{C}) \right\}.
\end{eqnarray*}
Given a hom-Lie algebra structure $(\mu,A)$ in $\operatorname{hom}\text{-}\mathcal{L}_{n}(\mathbb{C})$,
it follows from Definition \ref{morfismo} that the orbit $ O(\mu,A)$ corresponds to the isomorphism class
of the hom-Lie algebra $(\mathbb{C}^{n},\mu,A)$.

For each $(\mu,A) \in \operatorname{hom}\text{-}\mathcal{L}_{n}(\mathbb{C})$,
the \textit{isotropy group} of  $(\mu,A)$ is the set of elements of $\operatorname{GL}(n,\mathbb{C})$
that fix $(\mu,A)$, i.e., $g \in \operatorname{GL}(n,\mathbb{C})$ such that $g\bullet \mu = \mu$
and $g \bullet A = A$ (or equivalently, $g A g^{-1} = A $). We can think of such group
as the \textit{automorphism group} of the hom-Lie algebra $(\mathbb{C}^{n},\mu,A)$, $\operatorname{Aut}(\mathbb{C}^{n},\mu,A)$;
in this way $\operatorname{Der}(\mathbb{C}^{n},\mu,A)$ (see Definition \ref{derivation}) is the Lie algebra of $\operatorname{Aut}(\mathbb{C}^{n},\mu,A)$.

The \textit{Zariski closure} of an arbitrary subset $Y \subseteq W$, denoted by $\overline{Y}^{\operatorname{Z}}$, is the smallest \textit{Zariski closed set} in $W$ containing $Y$; which is $\mathcal{V}(\mathcal{I}(Y))$.
In the case $Y=O(\mu,A)$ with $(\mu,A)$ a hom-Lie algebra structure on $\mathbb{C}^{n}$,
since $\operatorname{hom}\text{-}\mathcal{L}_{n}(\mathbb{C})$ is a Zariski closed set of $W$,
$\overline{O(\mu,A)}^{\operatorname{Z}} \subseteq \operatorname{hom}\text{-}\mathcal{L}_{n}(\mathbb{C})$.
Even though an orbit $O(v)$ in a $G$-variety is not necessarily a Zariski closed set, the \textit{dimension of an orbit}
is by definition the dimension of its Zariski closure and it is well-known that the isotropy group of $v$, $G_{v}$, is
an algebraic subgroup of $G$ and
\begin{eqnarray*}
  \operatorname{dim}(O(v)) & = & \operatorname{dim}(G) - \operatorname{dim}(G_{v})\\
                           & = & \operatorname{dim}(\operatorname{Lie}G) -\operatorname{dim}(\operatorname{Lie}G_{v});
\end{eqnarray*}
see for instance \cite[Sections 21.4, 23.2]{Tauvel}.

\begin{definition}[Degeneration]
  Let $(\mu,A)$ and $(\lambda,B)$ be two hom-Lie structures on $\mathbb{C}^{n}$. It is said that
  \textit{$(\mu,A)$ degenerates to $(\lambda,B)$} if $(\lambda,B) \in \overline{O(\mu,A)}^{\operatorname{Z}}$
  and this is denoted by $ (\mu,A) \xrightarrow{\text{\,\,deg\,\,}} (\lambda,B)$.
  The degeneration $ (\mu,A) \xrightarrow{\text{\,\,deg\,\,}} (\lambda,B)$ is called \textit{proper}
  if $(\lambda,B)$ is in the boundary of the orbit $O(\mu,A)$, or equivalently,
  if $(\mathbb{C}^{n},\mu,A)$ and $(\mathbb{C}^{n},\lambda,B)$ are not isomorphic hom-Lie algebras.
\end{definition}

The following proposition is due to Armand Borel (see \cite[Proposition 15.4]{Borel1}) and is very important to study degenerations of linear (algebraic) structures:

\begin{proposition}[{\cite[\textit{Closed orbit lemma}]{Borel2}}, {\cite[Proposition 21.4.5]{Tauvel}}]\label{orden}
Let $G$ be an algebraic group acting morphically on a non-empty
variety $\mathcal{Z}$. Then each orbit is a smooth variety which is open in its closure in
$\mathcal{Z}$. Its boundary is a union of orbits of strictly lower dimension. In particular,
the orbits of minimal dimension are closed.
\end{proposition}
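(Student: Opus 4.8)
The plan is to deduce everything from two structural facts: the orbit map is a morphism of varieties, and $G$ acts by variety automorphisms, so the orbit is homogeneous. Fix $x \in \mathcal{Z}$ and consider the orbit morphism $\alpha \colon G \to \mathcal{Z}$, $g \mapsto g\cdot x$, whose image is the orbit $O := G\cdot x$. The Zariski closure $\overline{O}$ is $G$-invariant, since for each $g \in G$ the map $z \mapsto g\cdot z$ is a homeomorphism of $\mathcal{Z}$ carrying $O$ onto itself, hence $\overline{O}$ onto itself. To see that $O$ is \emph{open} in $\overline{O}$, I would first invoke Chevalley's theorem: the image $O$ of the morphism $\alpha$ is a constructible subset of $\mathcal{Z}$, and every constructible set contains a nonempty subset $U$ that is open and dense in its own closure $\overline{O}$. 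Now homogeneity takes over: given any $y \in O$, pick $u \in U \subseteq O$; since $O$ is a single orbit there is $g \in G$ with $g\cdot u = y$, and then $g\cdot U$ is an open neighbourhood of $y$ in $\overline{O}$ contained in $g\cdot O = O$. Covering $O$ by such translates, $O = \bigcup_{g \in G} g\cdot U$ is open in $\overline{O}$, so each orbit is open in its closure.

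The smoothness of $O$ follows from the same homogeneity principle: the smooth locus of the variety $O$ is a nonempty open subset, and it is preserved by every automorphism $z \mapsto g\cdot z$; since $G$ acts transitively on $O$, either every point is a smooth point or none is, which forces $O$ to be smooth. For the boundary, note that $\partial O := \overline{O}\setminus O$ is closed in $\overline{O}$ (because $O$ is open there) and $G$-invariant (being the difference of two $G$-invariant sets), hence a union of orbits. Since $O$ is open and dense in $\overline{O}$, the closed set $\partial O$ cannot contain any top-dimensional component of $\overline{O}$, so $\dim \partial O < \dim \overline{O} = \dim O$; therefore every orbit contained in $\partial O$ has dimension strictly smaller than $\dim O$. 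Finally, if $O$ has minimal dimension among all orbits, no orbit of strictly smaller dimension exists, so $\partial O = \emptyset$ and $O = \overline{O}$ is closed, giving the last assertion.

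The hard part is the single input from algebraic geometry that makes all the homogeneity arguments run, namely Chevalley's theorem that the image of a morphism of varieties is constructible, together with the elementary topological fact that a constructible set contains a dense open subset of its closure. Once this is in hand every remaining step is a formal consequence of the transitive group action. The only point requiring mild care is the inequality $\dim \partial O < \dim O$ when $\overline{O}$ is reducible (which occurs precisely when $G$ is disconnected, so that $O$ is a finite union of $G^{\circ}$-orbits); there I would argue componentwise, using that $O$ meets each top-dimensional irreducible component of $\overline{O}$ in a dense open set and hence no such component can lie entirely in $\partial O$.
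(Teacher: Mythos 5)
Your proof is correct and follows essentially the same route as the source: the paper states this proposition without proof, citing Borel's \textit{Closed orbit lemma} and Tauvel--Yu, and the argument in those references is precisely your combination of Chevalley constructibility (an orbit, being the image of the orbit morphism, contains a subset open in its closure) with homogeneity to spread openness and smoothness over the whole orbit, followed by the dimension count on the $G$-invariant boundary. The only blemish is your parenthetical aside that $\overline{O}$ is reducible ``precisely when $G$ is disconnected'': a disconnected $G$ can perfectly well have an irreducible orbit closure (the $G^{\circ}$-orbit closures may coincide), so only one implication holds --- but your componentwise argument never uses that equivalence, so the proof is unaffected.
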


An immediate consequence of the proposition above is that the dimension
of the algebra of derivations of a hom-Lie algebra is an \textit{obstruction}
to study its degenerations.

\begin{corollary}\label{niceDer}
  If $ (\mu,A) \xrightarrow{\text{\,\,deg\,\,}} (\lambda,B)$ is a proper degeneration
  then
  \begin{eqnarray*}
  \operatorname{dim}(\operatorname{Der}) (\mathbb{C}^{n},\mu,A) &<& \operatorname{dim}(\operatorname{Der}) (\mathbb{C}^{n},\lambda,B).
  \end{eqnarray*}
\end{corollary}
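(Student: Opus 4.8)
The plan is to derive the dimension inequality directly from the Closed Orbit Lemma (Proposition~\ref{orden}) together with the orbit-dimension formula recorded just before the statement. The key dictionary I would set up is: the group acting is $G = \operatorname{GL}(n,\mathbb{C})$, the variety is $\mathcal{Z} = \operatorname{hom}\text{-}\mathcal{L}_{n}(\mathbb{C})$, the isotropy group $G_{(\mu,A)}$ is the automorphism group $\operatorname{Aut}(\mathbb{C}^{n},\mu,A)$, and its Lie algebra is $\operatorname{Der}(\mathbb{C}^{n},\mu,A)$. With these identifications, the formula
\begin{equation*}
  \operatorname{dim}(O(\mu,A)) = \operatorname{dim}(G) - \operatorname{dim}(\operatorname{Der}(\mathbb{C}^{n},\mu,A))
\end{equation*}
holds (and likewise for $(\lambda,B)$), so that a strict inequality between orbit dimensions is equivalent to a strict \emph{reversed} inequality between derivation-algebra dimensions.

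\textbf{First} I would unwind what a proper degeneration asserts: by definition $(\lambda,B) \in \overline{O(\mu,A)}^{\operatorname{Z}}$ and $(\lambda,B) \notin O(\mu,A)$, i.e.\ $(\lambda,B)$ lies in the \emph{boundary} $\overline{O(\mu,A)}^{\operatorname{Z}} \setminus O(\mu,A)$. \textbf{Next} I would invoke Proposition~\ref{orden}, whose conclusion is precisely that this boundary is a union of orbits of strictly lower dimension than $O(\mu,A)$. Since $(\lambda,B)$ sits in the boundary, its own orbit $O(\lambda,B)$ is one of these lower-dimensional orbits, giving
\begin{equation*}
  \operatorname{dim}(O(\lambda,B)) < \operatorname{dim}(O(\mu,A)).
\end{equation*}

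\textbf{Finally} I would translate this strict inequality back through the dimension formula. Applying it to both points and noting that $\operatorname{dim}(G)$ is the same constant on both sides, the inequality $\operatorname{dim}(O(\lambda,B)) < \operatorname{dim}(O(\mu,A))$ becomes
\begin{equation*}
  \operatorname{dim}(G) - \operatorname{dim}(\operatorname{Der})(\mathbb{C}^{n},\lambda,B) < \operatorname{dim}(G) - \operatorname{dim}(\operatorname{Der})(\mathbb{C}^{n},\mu,A),
\end{equation*}
and cancelling $\operatorname{dim}(G)$ and reversing the inequality yields exactly the asserted $\operatorname{dim}(\operatorname{Der})(\mathbb{C}^{n},\mu,A) < \operatorname{dim}(\operatorname{Der})(\mathbb{C}^{n},\lambda,B)$.

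\textbf{The only genuine obstacle} is justifying the identification of the Lie algebra of the isotropy group with $\operatorname{Der}(\mathbb{C}^{n},\mu,A)$, and confirming that the isotropy group is a smooth algebraic subgroup so that $\operatorname{dim}(\operatorname{Aut}) = \operatorname{dim}(\operatorname{Lie}\operatorname{Aut}) = \operatorname{dim}(\operatorname{Der})$; in characteristic zero smoothness is automatic, and the tangent-space computation showing that the Lie algebra of the stabilizer of $(\mu,A)$ consists exactly of the derivations commuting with $A$ is the standard differentiation of the two defining conditions $g \bullet \mu = \mu$ and $gAg^{-1} = A$ at the identity. All of this was already anticipated in the discussion preceding the statement, so the remaining work is essentially bookkeeping rather than a new difficulty.
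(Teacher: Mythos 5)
Your proposal is correct and follows exactly the route the paper intends: the corollary is stated as an immediate consequence of Proposition \ref{orden}, using that a proper degeneration places $(\lambda,B)$ in the boundary of $O(\mu,A)$ (hence in an orbit of strictly lower dimension) together with the formula $\operatorname{dim}(O(v)) = \operatorname{dim}(G) - \operatorname{dim}(\operatorname{Lie}G_{v})$ and the identification, made explicitly in the paper just before the statement, of $\operatorname{Der}(\mathbb{C}^{n},\mu,A)$ with the Lie algebra of the isotropy group $\operatorname{Aut}(\mathbb{C}^{n},\mu,A)$. Your closing remark about smoothness and the tangent-space computation is exactly the bookkeeping the paper leaves implicit, so nothing is missing.
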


Although it is generally quite difficult to work with the Zariski topology,
the following observation that has originally been proved by Jean-Pierre Serre
in {\cite[Proposition 5.]{Serre}} (see also {\cite[Section 2.2]{Borel3}}, {\cite[Pag. 165]{Weil}} or
the exposition given by David Mumford of \textit{Complex varieties} in \cite[\S 10]{M})
shows how to study Zariski closure of orbits by using tools from elementary calculus.

\begin{proposition}\label{EucZar}
  Let $\mathcal{Z}$ be a complex algebraic variety and Let $U$ be a Zariski open and Zariski dense subset of $\mathcal{Z}$.
  Then $U$ is dense in $\mathcal{Z}$ with respect to Euclidean subspace topology, and, in consequence,
  the closures of $U$ with respect to both topologies are the same.
\end{proposition}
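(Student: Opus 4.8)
The plan is to reduce the statement to the case of an irreducible variety and then localize to smooth points, where density becomes a consequence of the identity theorem for holomorphic functions. First I would decompose $\mathcal{Z}=\mathcal{Z}_1\cup\cdots\cup\mathcal{Z}_r$ into its irreducible components. Zariski density of $U$ forces $U$ to meet every $\mathcal{Z}_i$ (otherwise $U$ would lie in the union of the remaining components, whose Zariski closure omits $\mathcal{Z}_i$), so each $U\cap\mathcal{Z}_i$ is a nonempty Zariski-open, hence Zariski-dense, subset of the irreducible variety $\mathcal{Z}_i$. Since the Euclidean closure of a finite union is the union of the Euclidean closures, it suffices to treat each component separately. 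Thus I may assume $\mathcal{Z}$ is irreducible of dimension $d$, with $U$ nonempty Zariski-open and $Y:=\mathcal{Z}\setminus U$ a proper Zariski-closed subset; the task becomes to show that $Y$ is Euclidean nowhere dense.

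The main geometric input I would invoke is that the smooth locus $\mathcal{Z}_{\mathrm{sm}}$ is a nonempty Zariski-open subset which, by the holomorphic implicit function theorem, is a complex manifold of dimension $d$, and which is connected and Euclidean-dense in $\mathcal{Z}$. Granting this, fix $p\in\mathcal{Z}_{\mathrm{sm}}$ and a connected chart $\Omega$ around $p$ biholomorphic to a ball in $\mathbb{C}^d$. The polynomials cutting out $Y$ restrict to holomorphic functions on $\Omega$; they cannot all vanish on $\Omega$, for then, by the identity theorem on the connected manifold $\mathcal{Z}_{\mathrm{sm}}$, they would vanish on all of $\mathcal{Z}_{\mathrm{sm}}$ and, by Zariski density of $\mathcal{Z}_{\mathrm{sm}}$, on all of $\mathcal{Z}$, forcing $Y=\mathcal{Z}$. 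Hence some restricted function is not identically zero on $\Omega$, and the zero set of a nonzero holomorphic function has empty interior (again by the identity theorem). Therefore $Y\cap\Omega$ has empty interior, so $U$ is Euclidean-dense near $p$; as $p\in\mathcal{Z}_{\mathrm{sm}}$ was arbitrary, $U\cap\mathcal{Z}_{\mathrm{sm}}$ is Euclidean-dense in $\mathcal{Z}_{\mathrm{sm}}$.

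Since $\mathcal{Z}_{\mathrm{sm}}$ is Euclidean-dense in $\mathcal{Z}$ and $U\cap\mathcal{Z}_{\mathrm{sm}}$ is Euclidean-dense in $\mathcal{Z}_{\mathrm{sm}}$, the set $U$ is Euclidean-dense in $\mathcal{Z}$. Consequently its Euclidean closure equals $\mathcal{Z}$, and since its Zariski closure also equals $\mathcal{Z}$ by hypothesis, the two closures coincide, which proves the proposition. I expect the genuinely hard part to be the analytic input used above, namely that the smooth locus of an irreducible complex variety is a connected complex manifold that is dense in the Euclidean topology, equivalently that a proper subvariety is Euclidean nowhere dense. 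This is the point at which one must leave purely algebraic reasoning: it can be handled by induction on $d$, using that the singular locus $\mathcal{Z}_{\mathrm{sing}}$ has strictly smaller dimension while the local analytic dimension of $\mathcal{Z}$ equals $d$ at every point, so that no Euclidean-open piece of $\mathcal{Z}$ can be contained in $\mathcal{Z}_{\mathrm{sing}}$; alternatively, one may simply appeal to the results of Serre, Borel, Weil and Mumford cited before the statement.
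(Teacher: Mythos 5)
Your proposal cannot be compared with the paper's proof line by line, because the paper gives none: Proposition \ref{EucZar} is recorded there as a classical fact, with citations to Serre (\cite[Proposition 5]{Serre}), Borel--Harish-Chandra \cite[Section 2.2]{Borel3}, Weil \cite[p. 165]{Weil}, and Mumford's treatment of complex varieties in \cite[\S 10]{M}. What you have written is essentially a correct reconstruction of the standard textbook argument, closest in spirit to Mumford's exposition: the reduction to irreducible components is right (a component missed by $U$ would force the Zariski closure of $U$ into the union of the remaining components, contradicting density), the identity-theorem argument on a chart of the smooth locus is right, and transitivity of density finishes the job. Two caveats are worth making explicit. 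First, as you yourself note, all the genuine content sits in the classical input that for irreducible $\mathcal{Z}$ the smooth locus $\mathcal{Z}_{\mathrm{sm}}$ is a \emph{connected} complex manifold that is Euclidean-dense in $\mathcal{Z}$; observe that the density half of this input is literally the irreducible case of the proposition for $U=\mathcal{Z}_{\mathrm{sm}}$, so your identity-theorem step does not prove the statement from scratch but rather reduces an arbitrary Zariski-open dense $U$ to the single case of the smooth locus. That is a worthwhile reduction, since it isolates exactly where one must leave algebra, but it means your closing induction sketch (local analytic dimension $d$ at every point versus $\dim\mathcal{Z}_{\mathrm{sing}}<d$) carries the real weight, and making it rigorous needs local dimension theory for analytic sets, e.g., presenting $\mathcal{Z}$ locally as a finite branched cover of $\mathbb{C}^{d}$ via Noether normalization. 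Second, connectedness of $\mathcal{Z}_{\mathrm{sm}}$ is itself a nontrivial classical theorem rather than a formal consequence of irreducibility, so it too belongs on the list of facts you are importing. Since you explicitly allow the fallback of citing the very references the paper invokes, your proposal is acceptable and, unlike the paper, actually indicates how a proof goes; what it buys is a self-contained route modulo two clearly flagged classical lemmas, whereas the paper buys brevity by outsourcing the entire statement.
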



Other basic concept to consider is related with the notion of \textit{rigidity}

\begin{definition}[Rigid hom-Lie algebra]
A hom-Lie algebra $(\mathbb{C}^{n},\mu,A)$ is called rigid (in $\operatorname{hom}\text{-}\mathcal{L}_{n}(\mathbb{C})$)
if the $\operatorname{GL}(n,\mathbb{C})$-orbit of $(\mu,A)$ is a Zariski open set of $\operatorname{hom}\text{-}\mathcal{L}_{n}(\mathbb{C})$.
\end{definition}

A consequence of this definition is that a rigid hom-Lie algebra is not a proper
degeneration of other hom-Lie algebra. In fact, a rigid hom-Lie algebra
is \textit{responsible} for one of the irreducibles components of $\operatorname{hom}\text{-}\mathcal{L}_{n}(\mathbb{C})$; i.e.,
the Zariski closure of a rigid hom-Lie algebra is an irreducible component in the mentioned algebraic set.
Therefore, there exists only a finite number of rigid hom-Lie algebra in each dimension.

\begin{remark}
A hom-Lie algebra $(\mathbb{C}^{n},\mu,A)$ with $(\mathbb{C}^{n},\mu)$ a Lie algebra cannot be rigid in $\operatorname{hom}\text{-}\mathcal{L}_{n}(\mathbb{C})$ because $(\mathbb{C}^{n},\mu, A + t\operatorname{Id})$ (with $t$  sufficiently close to zero) is a one-parameter family of hom-Lie algebras which are not
isomorphic to $(\mathbb{C}^{n},\mu, A)$. If $(\mathbb{C}^{n},\mu,A)$ is a hom-Lie algebra with $A$ a non-nilpotent transformation, then
it can be shown quickly that $(\mathbb{C}^{n},\mu, A)$ is not rigid because $(\mathbb{C}^{n},\mu,(1+t)A)$ is a one-parameter family of hom-Lie algebras which are not
isomorphic to $(\mathbb{C}^{n},\mu, A)$.
\end{remark}

Other interesting consequence of the Proposition \ref{orden} is worth noting that the notion
of degeneration defines a \textit{partial order} on the orbit space $\operatorname{hom}\text{-}\mathcal{L}_{n}(\mathbb{C}) / \operatorname{GL}(n,\mathbb{C})$ given
by $\operatorname{GL}(n,\mathbb{C})\bullet (\lambda,B) \leq \operatorname{GL}(n,\mathbb{C})\bullet (\mu,A)$ if $(\mu,A)$ degenerates to $(\lambda,B)$.
And so, once we have an ``enumeration" of $n$-dimensional hom-Lie algebras (with certain properties), the next natural step would be to keep ``order" in such set. This kind of ideas originally go back to works of Albert Nijenhuis and Roger Wolcott Richardson in \cite{NR1, NR2},
Murray Gerstenhaber (see \cite{Gerstenhaber}) and Peter Gabriel \cite{Gabriel}. Obtain
a classification of orbit closures in a variety of algebraic structures of certain type is occasionally
called a \textit{geometric classification} (term introduced by Guerino Mazzola in \cite{mazzola} inspired by \cite{Gabriel}).

\subsubsection{Other varieties of hom-Lie algebras}

One can consider other algebraic sets related with hom-Lie algebras in the same way as we have introduced the set $\operatorname{hom}\text{-}\mathcal{L}_{n}(\mathbb{C})$
and can also study the notions of rigidity and degenerations;
for instance, the algebraic set of $n$-dimensional complex multiplicative hom-Lie algebras, $\operatorname{hom}^{\operatorname{m}}\text{-}\mathcal{L}_{n}(\mathbb{C})$ and if the twisting map is prescripted, say $A$, it induces the algebraic subset of $C^{2}$, $\operatorname{hom}_{A}\text{-}\mathcal{L}_{n}(\mathbb{C})$, given by the skew-symmetric bilinear maps $\mu \in C^{2} $ such that $(\mathbb{C}^{n},\mu,A)$ is a hom-Lie algebra (or $\operatorname{hom}_{A}^{\operatorname{m}}\text{-}\mathcal{L}_{n}(\mathbb{C})$
for the multiplicative case). Whereas $\operatorname{GL}(n,\mathbb{C})$  naturally acts on $\operatorname{hom}\text{-}\mathcal{L}_{n}(\mathbb{C})$,
we only have the action of $\operatorname{GL}(n,\mathbb{C})_{A}$, the set of invertible matrices that commute with $A$, in the set $\operatorname{hom}_{A}\text{-}\mathcal{L}_{n}(\mathbb{C})$.

By following \cite[Section 24]{NR2}, we have sufficient conditions for a hom-Lie algebra to be rigid in each algebraic set.
In fact, let $(\mathbb{C}^{n},\mu,A)$ be a (multiplicative) hom-Lie algebra and let us denote by $\mathfrak{gl}(n,\mathbb{C})\bullet (\mu,A)$
and $\mathfrak{gl}_{A}(n,\mathbb{C})\bullet\mu$ the first approximations to the ``Zariski tangent'' to $\operatorname{GL}(n,\mathbb{C})\bullet(\mu,A)$ and $\operatorname{GL}_{A}(n,\mathbb{C})\bullet \mu$ at $(\mu,A)$ and $\mu$ respectively. More precisely, this means
\begin{eqnarray*}
  \mathfrak{gl}(n,\mathbb{C})\bullet (\mu,A)  &:= &  \left\{ (\lambda,B)\in  C^{2}\times C^{1} :
 \begin{array}{l}
  \lambda = \delta_{\mu}(X) \mbox{ and }  B = AX-XA, \\
   \mbox{ with } X\in \mathfrak{gl}(n,\mathbb{C})
  \end{array}
  \right\}
  \\
  \mathfrak{gl}_{A}(n,\mathbb{C})\bullet\mu  & := &
  \left\{
  \lambda \in  C^{2} :
   \begin{array}{l}
   \lambda = \delta_{\mu}(X) \mbox{ with } X\in \mathfrak{gl}_{A}(n,\mathbb{C})
  \end{array}
  \right\}
\end{eqnarray*}
where
\begin{eqnarray*}
   \delta_{\mu}(X)(y,z)&:=& X\mu(y,z) - \mu(X y,z) - \mu(y, Xz),\, \forall y,z \in \mathbb{C}^{n}
\end{eqnarray*}
and $\mathfrak{gl}(n,\mathbb{C})_{A}$ is the set of complex matrices $n\times n$ that commute with $A$.

Similarly, let $T_{1}(\mu,A)$, $T_{2}(\mu,A)$, $T_{3}(\mu)$ and $T_{4}(\mu)$ be denote the first approximations to the \textit{Zariski tangent} to
$\operatorname{hom}\text{-}\mathcal{L}_{n}(\mathbb{C})$, $\operatorname{hom}^{\operatorname{m}}\text{-}\mathcal{L}_{n}(\mathbb{C})$,
$\operatorname{hom}_{A}\text{-}\mathcal{L}_{n}(\mathbb{C})$ and $\operatorname{hom}_{A}^{\operatorname{m}}\text{-}\mathcal{L}_{n}(\mathbb{C})$
at $(\mu,A)$ and $\mu$ respectively: for $(\mu,A)$ we have
\begin{eqnarray*}
  T_{1}(\mu,A) & := &
  \left\{ (\lambda,B) \in  C^{2}\times C^{1} :( \operatorname{d}\operatorname{Jac})_{(\mu,A)}(\lambda,B) =0
   \right\}\\
  T_{2}(\mu,A)  & := &
  \left\{
  (\lambda,B) \in  C^{2}\times C^{1} : ( \operatorname{d}\operatorname{Jac})_{(\mu,A)}(\lambda,B) =0 \mbox{ and } ( \operatorname{d}\operatorname{m})_{(\mu,A)}(\lambda,B) =0
  \right\},
\end{eqnarray*}
where
\begin{eqnarray}
  \begin{array}{lll}
    (\operatorname{d}\operatorname{Jac})_{(\mu,A)}(\lambda,B)(x_1,x_2,x_3) & = &
    \sum_{\sigma \in S_{3}} \operatorname{sign}(\sigma) \mu     (Ax_{\sigma(1)} , \lambda(x_{\sigma(2)} , x_{\sigma(3)})) + \\
    & & \sum_{\sigma \in S_{3}} \operatorname{sign}(\sigma) \lambda (Ax_{\sigma(1)} , \mu(x_{\sigma(2)} , x_{\sigma(3)})) +\\
    & & \sum_{\sigma \in S_{3}} \operatorname{sign}(\sigma) \mu (Bx_{\sigma(1)} , \mu(x_{\sigma(2)} , x_{\sigma(3)}))
 \end{array}
\end{eqnarray}
and
\begin{eqnarray}
  \begin{array}{lll}
    (\operatorname{d}\operatorname{m})_{(\mu,A)}(\lambda,B)(y,z) & = & \left( A\lambda(y,z) -\lambda(Ay,Az) \right) +\\
     & & \left( B\mu(y,z) - \mu(Ay,Bz) - \mu(By,Az) \right),\, \forall y,z \in \mathbb{C}^{n}.
 \end{array}
\end{eqnarray}

With respect to the sets $\operatorname{hom}_{A}\text{-}\mathcal{L}_{n}(\mathbb{C})$ and $\operatorname{hom}_{A}^{\operatorname{m}}\text{-}\mathcal{L}_{n}(\mathbb{C})$ we have
\begin{eqnarray*}
  T_{3}(\mu) & := &
  \left\{
  \lambda \in  C^{2} :( \operatorname{d}\operatorname{Jac}_{A})_{\mu}\lambda =0
   \right\}\\
  T_{4}(\mu)  & := &
  \left\{
  \lambda \in  C^{2} :( \operatorname{d}\operatorname{Jac}_{A})_{\mu}\lambda =0 \mbox{ and } ( \operatorname{d}\operatorname{m})_{A}\lambda =0
  \right\},
\end{eqnarray*}
where
\begin{eqnarray}
\begin{array}{lcl}
    (\operatorname{d}\operatorname{Jac}_{A})_{\mu}\lambda(x_1,x_2,x_3) & = &
    \sum_{\sigma \in S_{3}} \operatorname{sign}(\sigma) \mu     (Ax_{\sigma(1)} , \lambda(x_{\sigma(2)} , x_{\sigma(3)})) + \\
    & & \sum_{\sigma \in S_{3}} \operatorname{sign}(\sigma) \lambda (Ax_{\sigma(1)} , \mu(x_{\sigma(2)} , x_{\sigma(3)}))
\end{array}
\end{eqnarray}
and
\begin{eqnarray}
(\operatorname{d}\operatorname{m})_{A}\lambda(y,z) & = &  A\lambda(y,z) -\lambda(Ay,Az),\, \forall y,z \in \mathbb{C}^{n}.
\end{eqnarray}
If $\mathfrak{gl}(n,\mathbb{C})\bullet (\mu,A)$ coincides with  $T_{1}(\mu,A)$ or $\mathfrak{gl}_{A}(n,\mathbb{C})\bullet\mu$
or $\mathfrak{gl}_{A}(n,\mathbb{C})\bullet\mu$ with $T_{3}(\mu)$, then $(\mu,A)$ is rigid in $\operatorname{hom}\text{-}\mathcal{L}_{n}(\mathbb{C})$,
respectively $\mu$ is rigid in $\operatorname{hom}_{A}\text{-}\mathcal{L}_{n}(\mathbb{C})$ (and analogously in the multiplicative case).

It is to be expected that the above conditions are not necessary for the rigidity of a hom-Lie algebra
in such algebraic sets; since a rigid hom-Lie algebra can satisfy additional (intrinsic) identities which are ``independent'' of
the hom-Jacobi condition. For instance, it is easy to give a linear transformation $A$ of $\mathbb{C}^{3}$ such that $\operatorname{hom}_{A}^{\operatorname{m}}\text{-}\mathcal{L}_{3}(\mathbb{C})$ has only two $\operatorname{GL}_{A}(3,\mathbb{C})$-orbits,
one of which corresponds to a rigid hom-Lie algebra not satisfying the above sufficient condition (consider $A$ defined by
$Ae_{1} = 0$, $Ae_2 = 2 e_2 + e_3$ and $Ae_3 = e_3$).

\section{Classification}\label{homLie3}

Our purpose in this section is to give the classification of the hom-Lie algebra structures on
$3$-dimensional complex Lie algebras with nilpotent twisting map, up to isomorphism of
hom-Lie algebras. Let $(V,\cdot)$ be a skew-symmetric algebra and let us consider the vector
space of hom-Lie structures on $(V,\cdot)$
\begin{eqnarray*}
 \operatorname{hom-Lie}(V,\cdot) & := & \left\{ A \in \operatorname{End}(V) : \operatorname{Jac}_{(\cdot,A)} = 0\right\}.
\end{eqnarray*}
It follows from Definition \ref{morfismo} that two hom-Lie algebras $(V,\cdot,A)$ and $(V,\cdot,B)$ are
isomorphic if and only if $A$ and $B$ are conjugate with respect to an automorphism of the algebra $(V,\cdot)$.
Therefore, for each Lie algebra $\mathfrak{g}$ in Theorem \ref{3Lie}, we need to study $\operatorname{Aut}(\mathfrak{g})$-conjugacy classes in
$\operatorname{hom-Lie}(\mathfrak{g})$. And so, for instance, by the \textit{cyclic decomposition of a nilpotent operator}, the $3$-dimensional complex abelian
Lie algebra has only three hom-Lie algebra structures with nilpotent twisting map, up to isomorphism:

\begin{proposition}
  The hom-Lie algebra structures with nilpotent twisting map on the $3$-dimensional complex abelian Lie algebra are given by (up to isomorphism):
  \begin{enumerate}
    \item $\mathfrak{L}_{0}^{0}$: $(\mathfrak{a}_{3}(\mathbb{C}),A_0)$ with $A_0$ the zero map,
    \item $\mathfrak{L}_{0}^{1}$: $(\mathfrak{a}_{3}(\mathbb{C}),A_1)$ with $A_1 e_1=0$, $A_1 e_2 =0$ and $A_1 e_3 =e_2$,
    \item $\mathfrak{L}_{0}^{2}$: $(\mathfrak{a}_{3}(\mathbb{C}),A_2)$ with $A_2 e_1=0$, $A_2 e_2 =e_1$ and $A_2 e_3 =e_2$.
  \end{enumerate}
\end{proposition}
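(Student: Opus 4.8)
The plan is to reduce the statement to a purely linear-algebraic classification of nilpotent operators. First I would observe that, because the multiplication of $\mathfrak{a}_{3}(\mathbb{C})$ is identically zero, every summand in the hom-Jacobi identity contains a factor of the form $x_{\sigma(2)}\cdot x_{\sigma(3)} = 0$, so $\operatorname{Jac}_{(\cdot,A)} \equiv 0$ for \emph{every} linear map $A$. Hence $\operatorname{hom-Lie}(\mathfrak{a}_{3}(\mathbb{C})) = \operatorname{End}(\mathbb{C}^{3})$, and the nilpotent structures are exactly the nilpotent endomorphisms of $\mathbb{C}^{3}$; no algebraic constraint beyond nilpotency survives.

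Next I would invoke the isomorphism criterion recorded just before the statement (coming from Definition \ref{morfismo}): two structures $(\mathfrak{a}_{3}(\mathbb{C}),A)$ and $(\mathfrak{a}_{3}(\mathbb{C}),B)$ are isomorphic precisely when $A$ and $B$ are conjugate by an automorphism of the algebra. Since the trivial multiplication is preserved by every invertible linear map, $\operatorname{Aut}(\mathfrak{a}_{3}(\mathbb{C})) = \operatorname{GL}(3,\mathbb{C})$ (as already listed in Proposition \ref{autoLie3}). Therefore the classification up to isomorphism is exactly the classification of nilpotent operators on $\mathbb{C}^{3}$ up to $\operatorname{GL}(3,\mathbb{C})$-conjugacy.

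Finally I would apply the cyclic (Jordan) decomposition of a nilpotent operator over an algebraically closed field: a nilpotent $A \in \operatorname{End}(\mathbb{C}^{3})$ is determined up to conjugacy by the sizes of its Jordan blocks, that is, by a partition of $3$. The three partitions $1+1+1$, $2+1$ and $3$ yield exactly three conjugacy classes, with canonical representatives the zero map, a single size-$2$ block together with a size-$1$ block, and a single size-$3$ block. Writing these in the basis $\{e_1,e_2,e_3\}$ gives precisely $A_0$, $A_1$ and $A_2$ in the statement. Because distinct partitions give non-conjugate operators, the three structures are pairwise non-isomorphic, which completes the classification.

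There is essentially no serious obstacle here: the only point requiring care is matching the chosen canonical forms to the explicit maps, namely checking that $A_1$ (with $A_1^{2} = 0$ and $\operatorname{rank} A_1 = 1$) realizes the $2+1$ block shape while $A_2$ (with $A_2^{2} \neq 0$) realizes the single $3$-block, and confirming that the abelian structure imposes no extra hom-Jacobi relations that could split or merge these classes.
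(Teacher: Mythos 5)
Your proof is correct and takes essentially the same approach as the paper: the hom-Jacobi identity is vacuous because the multiplication of $\mathfrak{a}_{3}(\mathbb{C})$ is zero, $\operatorname{Aut}(\mathfrak{a}_{3}(\mathbb{C})) = \operatorname{GL}(3,\mathbb{C})$, and the cyclic (Jordan) decomposition of nilpotent operators on $\mathbb{C}^{3}$ gives exactly the three conjugacy classes represented by $A_0$, $A_1$ and $A_2$. The paper compresses all of this into a single appeal to the cyclic decomposition of a nilpotent operator, so your write-up merely makes explicit the steps the authors leave implicit.
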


\subsection{hom-Lie structures on $\mathfrak{so}_{3}(\mathbb{C})$}

As we mentioned above, it is important to determine the vector space $\operatorname{hom-Lie}(\mathfrak{so}_{3}(\mathbb{C}))$.
It is straightforward to show that:

\begin{lemma}
An endomorphism $A$ of $\mathbb{C}^{3}$ defines a hom-Lie algebra structure on $\mathfrak{so}_{3}(\mathbb{C})$
if and only if $A$ is a symmetric operator for the Killing form of $\mathfrak{so}_{3}(\mathbb{C})$,
or equivalently the matrix of $A$ with respect to the ordered basis $\{e_1,e_2,e_3\}$ is a symmetric complex matrix.
\end{lemma}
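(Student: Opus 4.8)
The plan is to reduce the hom-Jacobi identity to a single vector equation and then read off the symmetry condition directly. First I would invoke the observation already made in the text: since $\operatorname{Jac}_{(\cdot,A)}$ is, by construction, an alternating trilinear map $\mathbb{C}^3\times\mathbb{C}^3\times\mathbb{C}^3\to\mathbb{C}^3$, and $\mathbb{C}^3$ has dimension $3$, this tensor is completely determined by its value on the single ordered triple $(e_1,e_2,e_3)$. Hence $A$ defines a hom-Lie structure on $\mathfrak{so}_3(\mathbb{C})$ if and only if the one vector identity $\operatorname{Jac}_{(\cdot,A)}(e_1,e_2,e_3)=0$ holds (one straightforward verification suffices in dimension $3$).

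Next I would expand this single value. Writing the matrix of $A$ as $Ae_j=\sum_i a_{ij}e_i$ and using the structure constants $e_1\cdot e_2=e_3$, $e_2\cdot e_3=e_1$, $e_3\cdot e_1=e_2$ together with anticommutativity, each of the six permutations $\sigma\in S_3$ contributes the term $\operatorname{sign}(\sigma)\,Ae_{\sigma(1)}\cdot(e_{\sigma(2)}\cdot e_{\sigma(3)})$. A convenient shortcut is the vector triple product identity $x\cdot(y\cdot z)=\langle x,z\rangle\,y-\langle x,y\rangle\,z$, valid for the cross-product bracket of $\mathfrak{so}_3(\mathbb{C})$, where $\langle\cdot,\cdot\rangle$ is the bilinear form for which $\{e_1,e_2,e_3\}$ is orthonormal; this spares the nested product computations. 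Carrying out the sum, the six contributions collapse in pairs and yield
\begin{equation*}
  \operatorname{Jac}_{(\cdot,A)}(e_1,e_2,e_3)=2\big[(a_{23}-a_{32})\,e_1+(a_{31}-a_{13})\,e_2+(a_{12}-a_{21})\,e_3\big].
\end{equation*}
Since $\{e_1,e_2,e_3\}$ is a basis, this vector vanishes if and only if $a_{23}=a_{32}$, $a_{31}=a_{13}$ and $a_{12}=a_{21}$, that is, if and only if the matrix $(a_{ij})$ of $A$ is symmetric. This establishes the second characterization. I expect this expansion to be where essentially all of the (entirely routine) work lies; the only genuine care required is tracking the signs over $S_3$ and noticing that the terms double up pairwise.

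Finally I would link symmetry of the matrix with self-adjointness for the Killing form. A short computation of the operators $\operatorname{ad}(e_i)$ in the basis $\{e_1,e_2,e_3\}$ gives $\kappa(e_i,e_j)=\operatorname{Trace}(\operatorname{ad}(e_i)\operatorname{ad}(e_j))=-2\,\delta_{ij}$, so the Killing form $\kappa$ is the nonzero scalar multiple $-2\langle\cdot,\cdot\rangle$ of the standard form, with Gram matrix $-2\operatorname{Id}$ in this basis. Consequently the condition that $A$ be $\kappa$-symmetric, namely $\kappa(Ax,y)=\kappa(x,Ay)$ for all $x,y$, becomes $A^{T}(-2\operatorname{Id})=(-2\operatorname{Id})A$, i.e. $A^{T}=A$. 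Thus self-adjointness for the Killing form is equivalent to symmetry of the matrix of $A$, closing the loop between the two stated conditions and completing the proof.
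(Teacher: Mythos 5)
Your proof is correct, and the verified formula $\operatorname{Jac}_{(\cdot,A)}(e_1,e_2,e_3)=2\bigl[(a_{23}-a_{32})e_1+(a_{31}-a_{13})e_2+(a_{12}-a_{21})e_3\bigr]$ checks out, as does the computation $\kappa(e_i,e_j)=-2\delta_{ij}$ identifying $\kappa$-symmetry with symmetry of the matrix. The paper gives no proof of this lemma, stating only that it is straightforward, and your argument --- reducing the alternating tensor $\operatorname{Jac}_{(\cdot,A)}$ to its single value on $(e_1,e_2,e_3)$, as the paper itself notes suffices in dimension $3$, and then expanding --- is exactly the direct verification the authors intend.
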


Therefore, since $\operatorname{Aut}(\mathfrak{so}_{3}(\mathbb{C})) = \operatorname{SO}(3,\mathbb{C})$, we need to get
\textit{canonical forms} for complex symmetric matrices under complex orthogonal similarity.
Nigel Scott presents a solution to this problem in \cite{Scott} (or see also the references given there),
which can be used to classify, up to isomorphism, the hom-Lie algebra structures on $\mathfrak{so}(3,\mathbb{C})$
(not just the ones corresponding to nilpotent twisting maps). In our case, we have:

\begin{proposition}
 Any hom-Lie algebra $(\mathfrak{so}(3,\mathbb{C}), A)$ with $A$ nilpotent operator is isomorphic
   to some hom-Lie algebra $\mathfrak{L}_{7}^{i}:= (\mathfrak{so}(3,\mathbb{C}) , A_i)$ where $A_0$ is the Zero map and
   the matrix of $A_i$ with respect to the ordered basis $\{e_1,e_2,e_3\}$ is:
   \begin{center}
  \begin{tabular}{ c : c}
    $A_1$ & $A_2$   \\
    $\left(
      \begin{array}{c cc}
        0 & 0 & 0 \\
        0 & 1 & \sqrt{-1} \\
        0 & \sqrt{-1} & -1 \\
      \end{array}
    \right)$
    &
    $\left(
      \begin{array}{ccc}
        0 & 1 & \sqrt{-1} \\
        1 & 0 & 0 \\
        \sqrt{-1} & 0 & 0 \\
      \end{array}
    \right)$

  \end{tabular}
   \end{center}
Two hom-Lie algebras $(\mathfrak{so}(3,\mathbb{C}),A_i)$ and $(\mathfrak{so}(3,\mathbb{C}),A_j)$ are isomorphic if and only if $A_i = A_j$.

\end{proposition}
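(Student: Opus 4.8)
The plan is to combine the Lemma immediately above with the identity $\operatorname{Aut}(\mathfrak{so}_3(\mathbb{C})) = \operatorname{SO}(3,\mathbb{C})$ to turn the statement into a purely linear-algebraic one: classify nilpotent complex symmetric $3\times 3$ matrices up to the conjugation $A \mapsto gAg^{-1} = gAg^{T}$ by $g \in \operatorname{SO}(3,\mathbb{C})$, where I use $g^{-1}=g^{T}$. A first reduction is to replace $\operatorname{SO}(3,\mathbb{C})$ by the full group $\operatorname{O}(3,\mathbb{C})$: since $-\operatorname{Id}$ is central, lies in $\operatorname{O}(3,\mathbb{C})$, and has determinant $(-1)^3=-1$, any $g\in\operatorname{O}(3,\mathbb{C})$ realizing a conjugacy can be replaced by $\pm g$ to force determinant $1$, so the $\operatorname{O}$- and $\operatorname{SO}$-orbits coincide. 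This lets me invoke Witt's extension theorem for the bilinear form $\langle x,y\rangle=x^{T}y$ freely.

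Next I would stratify by rank. A nilpotent $3\times 3$ matrix has rank $0$, $1$, or $2$, with Jordan types $[1,1,1]$, $[2,1]$, $[3]$ respectively, and rank is a conjugacy invariant. A direct computation shows $A_1$ has rank $1$ with $A_1^{2}=0$, while $A_2$ has rank $2$ with $A_2^{2}=A_1\neq 0$ and $A_2^{3}=0$; together with $A_0=0$ the three representatives realize the three ranks. Since conjugation preserves rank, they are pairwise non-conjugate, which already gives the ``if and only if'' assertion (isomorphic $\iff$ equal rank $\iff A_i=A_j$). It then remains to show that each rank stratum of nilpotent symmetric matrices is a single orbit.

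For rank $1$ a symmetric matrix has the form $A=ww^{T}$ with $w\neq 0$, and nilpotency forces $w^{T}w=0$, i.e. $w$ isotropic; conversely every isotropic $w$ gives such an $A$. By Witt's theorem any two nonzero isotropic vectors are carried onto one another by some $g\in\operatorname{O}(3,\mathbb{C})$ (the map between the isotropic lines is an isometry since both vectors have length $0$), and $g(ww^{T})g^{-1}=(gw)(gw)^{T}$; taking $w_0=e_2+\sqrt{-1}\,e_3$, for which $w_0 w_0^{T}=A_1$, shows every rank-$1$ case is conjugate to $A_1$. For rank $2$ the operator $A$ is a single Jordan block, so there is a cyclic vector $v$ with $A^{2}v\neq 0$ and $\{v,Av,A^{2}v\}$ a basis. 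Self-adjointness gives $\langle A^{i}v,A^{j}v\rangle=\langle v,A^{i+j}v\rangle$, whence $\langle A^{2}v,A^{2}v\rangle=\langle Av,A^{2}v\rangle=0$ and, by nondegeneracy, $c:=\langle v,A^{2}v\rangle=\langle Av,Av\rangle\neq 0$. Replacing $v$ by $\lambda v+sAv+tA^{2}v$ I can successively kill $\langle v,Av\rangle$, then $\langle v,v\rangle$, then rescale $c$ to $1$, arriving at a cyclic basis whose Gram matrix is the fixed antidiagonal $G$ with $G_{13}=G_{22}=G_{31}=1$ and all other entries $0$. Running the same normalization for $A_2$ (starting from $e_2$ and correcting by $-\tfrac12 A_2^{2}e_2$) yields a cyclic basis for $A_2$ with the identical Gram matrix. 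Writing $S,S'$ for the matrices whose columns are these two cyclic bases, one has $S^{T}S=G=S'^{T}S'$ and $A=SNS^{-1}$, $A_2=S'NS'^{-1}$ with $N$ the standard nilpotent Jordan block; then $g:=SS'^{-1}$ satisfies $g^{T}g=(S'^{T})^{-1}GS'^{-1}=\operatorname{Id}$, so $g\in\operatorname{O}(3,\mathbb{C})$ and $gA_2g^{-1}=A$, settling this stratum.

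I expect the rank-$2$ case to be the main obstacle: it needs the simultaneous normalization of the Gram matrix of the cyclic basis together with the observation that \emph{identical Gram matrix plus identical Jordan action} forces orthogonal conjugacy (the computation $g^{T}g=\operatorname{Id}$ above). Everything else — the $\operatorname{O}$-versus-$\operatorname{SO}$ reduction, the rank stratification, the non-isomorphism, and the rank-$1$ analysis — is routine once Witt's theorem is available. As an alternative I could instead quote Nigel Scott's canonical forms for complex symmetric matrices under complex orthogonal similarity (cited in the text) and read off the nilpotent cases, but the self-contained cyclic-vector argument above seems cleaner and avoids importing the full classification.
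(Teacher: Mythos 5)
Your proposal is correct, and it takes a genuinely different route from the paper. The paper performs the same initial reduction you do --- the preceding Lemma identifies $\operatorname{hom-Lie}(\mathfrak{so}_{3}(\mathbb{C}))$ with the complex symmetric matrices, and $\operatorname{Aut}(\mathfrak{so}_{3}(\mathbb{C}))=\operatorname{SO}(3,\mathbb{C})$ turns the classification into canonical forms under complex orthogonal similarity --- but at that point it simply cites Nigel Scott's canonical forms for complex symmetric matrices (the reference \cite{Scott}) and reads off the nilpotent cases; no self-contained argument is given. You replace that citation with a complete elementary proof, and every step checks out: the $\operatorname{O}$-versus-$\operatorname{SO}$ reduction is valid precisely because $-\operatorname{Id}$ has determinant $-1$ in odd dimension; rank separates $A_0$, $A_1$, $A_2$ (indeed $A_1=w_0w_0^{T}$ with $w_0=e_2+\sqrt{-1}\,e_3$ isotropic, and one computes $A_2^{2}=A_1$, $A_2^{3}=0$); the rank-$1$ stratum is handled by Witt's extension theorem applied to isotropic lines; and in the rank-$2$ stratum the Hankel structure of the Gram matrix of a cyclic basis, with $\det=-a_2^{3}\neq 0$ forcing $a_2=\langle v,A^{2}v\rangle\neq 0$, lets the substitution $v\mapsto\lambda v+sAv+tA^{2}v$ normalize the Gram matrix to the antidiagonal $G$, after which the computation $g^{T}g=(S'^{T})^{-1}GS'^{-1}=\operatorname{Id}$ produces the orthogonal conjugacy (your normalization $e_2-\tfrac12 A_2^{2}e_2$ for $A_2$ does give Gram matrix $G$, as one verifies directly). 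The trade-off: your argument is self-contained and verifiable by hand, which is in the spirit of the paper's stated preference for avoiding results that are not easily checked; the paper's citation of Scott, on the other hand, costs nothing extra and yields the classification for \emph{all} twisting maps on $\mathfrak{so}_{3}(\mathbb{C})$, not just nilpotent ones --- a generality the paper explicitly points out and which your rank stratification, being tailored to nilpotent Jordan types, does not recover.
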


\subsection{hom-Lie structures on $\mathfrak{r}_{3,z}(\mathbb{C})$}

It is easy to see that a linear map $A$ defines a hom-Lie algebra structure on $\mathfrak{r}_{2}(\mathbb{C}) \times \mathbb{C}$
if and only if $A e_3 \in \operatorname{Span}_{\mathbb{C}}\{e_2,e_3\}$. With respect to $\mathfrak{r}_{3,z}(\mathbb{C})$ ($z\neq 0$),
its hom-Lie algebras structures are exactly those linear maps that leave its derived algebra invariant.  On the other hand,
the Lie algebras $\mathfrak{r}_{3,z}(\mathbb{C})$, for all $z \in \mathbb{C}$ have \textit{almost} the same automorphism group;
$\mathfrak{r}_{3,-1}$ has \textit{one} more automorphism and $\mathfrak{r}_{3,1}$ has others more.

So, we first study the action by conjugation of the group
$$
G:=\left\{
\left[
\begin {array}{c | c c}
1&0&0\\
\hline
x&a&0\\
y&0&b\end {array}
\right]
\in M(3,\mathbb{C})
:
a,b \in \mathbb{C}^{*} \right\}
$$
on the set
$$
\mathcal{N}_{1}:=\left\{
A \in M(3,\mathbb{C})
:
\begin{array}{l}
A \mbox{ is a nilpotent matrix and } \\
Ae_3 \in \operatorname{Span}_{\mathbb{C}}\{e_2,e_3\}
\end{array}
\right\}
$$
which contains the $G$-invariant set
$$
\mathcal{N}_{2}:=\left\{
A \in M(3,\mathbb{C})
:
\begin{array}{l}
A \mbox{ is a nilpotent matrix and } \\
 A\operatorname{Span}_{\mathbb{C}}\{e_2,e_3\} \subseteq \operatorname{Span}_{\mathbb{C}}\{e_2,e_3\}
\end{array}
\right\}
$$

\subsubsection{$G$-conjugacy classes in $\mathcal{N}_{2}$ }\label{classesN2}
Let us denote by $\mathfrak{k}$ the vector space $\operatorname{span}_{\mathbb{C}}\{e_2,e_3\}$. Note that $\mathfrak{k}$ is a codimension-$1$ invariant subspace for any $A$ in $\mathcal{N}_{2}$, and so $\operatorname{Im}A \subseteq \mathfrak{k}$. Also, $A |_\mathfrak{k} $ is obviously a nilpotent operator.

First consider the case in which $A |_\mathfrak{k}$ is the zero map. If $A$ is
non-zero then the degree of nilpotency of $A$ is $2$. Since $Ae_{1} \in \mathfrak{k}$
is easy to see that $A$ is $G$-similar to some of the following nilpotent matrices of $\mathcal{N}_{2}$ given by
\begin{itemize}
  \item $A_1 e_1 = e_2$, $A_1 e_2 = 0$, $A_1 e_3 = 0$,
  \item $A_2 e_1 = e_3$, $A_2 e_2 = 0$, $A_2 e_3 = 0$,
  \item $A_3 e_1 = e_2+e_3$, $A_3 e_2 = 0$, $A_3 e_3 = 0$.
\end{itemize}
For example, if $A e_1 = a e_2 +b e_3$ with $a,b \neq 0$, let $g$ be the matrix of $G$
defined by $ge_1 = e_1$, $g e_2 = ae_2$ and $g e_3 = b e_3$. Since $g (e_2 + e_3) = A e_1$,
we have $g^{-1} A g = A_3$.

The case in which $A |_\mathfrak{k}$  has nilpotency degree equal to $2$,
according to how $\operatorname{Ker} A |_\mathfrak{k}$ is positioned in $\mathfrak{k}$,
we have three types of possible $\widetilde{G}$-conjugacy classes for $A |_\mathfrak{k}$:

\begin{itemize}
  \item[$\star$] $B_1 e_2 = 0$, $B_1 e_3 = e_2$,
  \item[$\star$] $B_2 e_2 = e_3$, $B_2 e_3 = e_0$,
  \item[$\star$] $B_3 e_2 = B_3 e_3 = \lambda (e_2 - e_3)$  with $\lambda \in \mathbb{C}^{\star}$.
\end{itemize}

Here, $\widetilde{G}$ is the group $\{ g: \mathfrak{k} \rightarrow \mathfrak{k} : ge_2=ae_2, ge_3=be_3 \mbox{ wiht }a,b \in \mathbb{C}^{\star}\}$.
For example, if both $e_2$ and $e_3$ do not belong to the $\operatorname{Ker} A |_\mathfrak{k} $,
we have $A |_\mathfrak{k}$ sends $e_2$ to a nonzero vector $v_0$, and $e_3$ to $ t v_0$ with $t\neq 0$
($\operatorname{dim} (\operatorname{Im} (A |_\mathfrak{k})) = 1$). Let $g$ be the linear transformation of $\widetilde{G}$
given by $g e_2 = e_2$ and $g e_3 = \tfrac{1}{t} e_3$, we have $g^{-1} A |_\mathfrak{k} g$
is of the form described in the third type.

If $A \in \mathcal{N}_{2}$ is a nilpotent matrix of degree $2$ with $A|_{\mathfrak{k}}$ nonzero,
we have a vector $v_0$ in $\operatorname{Ker}(A)$ of the form $e_1 + w_0$ with $w_0 \in \mathfrak{k}$, and so $A$ is $G$-similar to
a matrix $\widetilde{A}$ in $\mathcal{N}_{2}$ with $Ae_1 =0$; consider $g e_1 =v_0$, $ge_2=e_2 $ and $ge_3=e_3$,
we see that $g\in G$ and $g^{-1}Age_1 = 0$. In turn  $\widetilde{A}$ is $G$-similar to
a matrix $A_{i}$ ($4 \leq i\leq 6$) such that $A_i e_1 =0$ and $A_i | \mathfrak{k}$ coincides
with $B_{i-3}$; as we observed above.

If $A \in \mathcal{N}_{2}$ is a nilpotent matrix of degree $3$, then $A|_{\mathfrak{k}}$ is nonzero
and $A$ is $G$-similar to a matrix $\widetilde{A}$ in $\mathcal{N}_{2}$ with
$A|_{\mathfrak{k}}$ equal to some $B_i$. When $i=2$ or $3$, since $\operatorname{Im}(\widetilde{A}) = \mathfrak{k}$,
there exists a vector $v_1$ of the form $e_1 + w_1$, with $w_1\in \mathfrak{k}$ such that $\widetilde{A} v_1 = t e_2$, $t \neq 0$ . By considering
the linear map $g \in G$ given by $ge_1 = v_1$, $ge_2=t e_2$ and $ge_3= t e_3$, we have $\widetilde{A}$ is $G$-similar to $g^{-1} \widetilde{A} g$:

\begin{itemize}
  \item $A_7 e_1 = e_2$, $A_7 e_2 =e_3$, $A_7 e_3 = 0$.
  \item $A_9 e_1 = e_2$, $A_9 e_2 = A_9 e_3 = \lambda(e_2-e_3)$.
\end{itemize}

In the case when $i=1$, by a similar argument as above (with $e_3$ playing the role of $e_2$), we have $\widetilde{A}$ is $G$-similar to a matrix of the form

\begin{itemize}
  \item $A_8 e_1 = e_3$, $A_8 e_2 =0$, $A_8 e_3 = e_2$.
\end{itemize}

The above discussion can be rephrased in terms of hom-Lie algebra structures on $\mathfrak{r}_{3,z}$ ($z(z^2-1)\neq0$)
with nilpotent twisting map:

\begin{proposition}
   Any hom-Lie algebra $(\mathfrak{r}_{3,z} , A)$ with $z(z^2-1)\neq0$ and $A$ nilpotent operator is isomorphic
   to some hom-Lie algebra $\mathfrak{L}_{5}^{i}(z):= (\mathfrak{r}_{3,z} , A_i)$ where $A_0$ is the zero map and
   the matrix of $A_i$ with respect to the ordered basis $\{e_1,e_2,e_3\}$ is:
   \begin{center}
  \begin{tabular}{c : c : c}
    $A_1$ & $A_2$ & $A_3$ \\
    $\left(
      \begin{array}{c | cc}
        0 & 0 & 0 \\
        \hline
        1 & 0 & 0 \\
        0 & 0 & 0 \\
      \end{array}
    \right)$
    &
    $\left(
      \begin{array}{c | cc}
        0 & 0 & 0 \\
        \hline
        0 & 0 & 0 \\
        1 & 0 & 0 \\
      \end{array}
    \right)$
    &
    $\left(
      \begin{array}{c | cc}
        0 & 0 & 0 \\
        \hline
        1 & 0 & 0 \\
        1 & 0 & 0 \\
      \end{array}
    \right)$\\
 \hdashline
    $A_4$ & $A_5$ & $A_6(\lambda)$ \\
    $\left(
      \begin{array}{c | cc}
        0 & 0 & 0 \\
        \hline
        0 & 0 & 1 \\
        0 & 0 & 0 \\
      \end{array}
    \right)$
    &
    $\left(
      \begin{array}{c | cc}
        0 & 0 & 0 \\
        \hline
        0 & 0 & 0 \\
        0 & 1 & 0 \\
      \end{array}
    \right)$
    &
    $\left(
      \begin{array}{c | cc}
        0 & 0 & 0 \\
        \hline
        0 & \lambda &  \lambda \\
        0 & - \lambda & - \lambda \\
      \end{array}
    \right)$\\
  \hdashline
    $A_7$ & $A_8$ & $A_9(\lambda)$ \\
    $\left(
      \begin{array}{c | cc}
        0 & 0 & 0 \\
        \hline
        1 & 0 & 0 \\
        0 & 1 & 0 \\
      \end{array}
    \right)$
    &
    $\left(
      \begin{array}{c | cc}
        0 & 0 & 0 \\
        \hline
        0 & 0 & 1 \\
        1 & 0 & 0 \\
      \end{array}
    \right)$
    &
    $\left(
      \begin{array}{c | cc}
        0 & 0 & 0 \\
        \hline
        1 &  \lambda &  \lambda \\
        0 & - \lambda & - \lambda \\
      \end{array}
    \right)$
  \end{tabular}
   \end{center}
Two hom-Lie algebras $(\mathfrak{r}_{3,z},A_i)$ and $(\mathfrak{r}_{3,z},A_j)$ are isomorphic if and only if $A_i = A_j$.
\end{proposition}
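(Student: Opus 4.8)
The plan is to reduce the statement entirely to the linear-algebra classification already carried out in Subsubsection \ref{classesN2}, via two translations. First I would record which nilpotent maps occur: by the remark preceding the proposition a nilpotent endomorphism $A$ is a hom-Lie structure on $\mathfrak{r}_{3,z}$ exactly when it leaves the derived algebra invariant, and since $z(z^2-1)\neq0$ forces $z\neq 0$, that derived algebra is genuinely $\mathfrak{k}=\operatorname{span}_{\mathbb{C}}\{e_2,e_3\}$; hence the nilpotent hom-Lie structures on $\mathfrak{r}_{3,z}$ are precisely the elements of $\mathcal{N}_2$. Second, by the criterion at the start of Section \ref{homLie3}, $(\mathfrak{r}_{3,z},A)$ and $(\mathfrak{r}_{3,z},B)$ are isomorphic if and only if $A$ and $B$ are $\operatorname{Aut}(\mathfrak{r}_{3,z})$-conjugate, and by Proposition \ref{autoLie3} the hypothesis $z^2\neq 1$ makes this automorphism group exactly $G$. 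Thus the desired isomorphism classes are in bijection with the $G$-conjugacy classes in $\mathcal{N}_2$, and the existence half of the statement — that every such $A$ is isomorphic to one of $A_0,\dots,A_9$ — is precisely the reduction performed in Subsubsection \ref{classesN2}, which I would simply invoke.

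The substance is then the uniqueness half: the listed representatives must be pairwise non-$G$-conjugate. Here I would exhibit $G$-invariants that already separate the ten families. The coarsest is the nilpotency degree of $A$, equal to $1$ for $A_0$, to $2$ for $A_1,\dots,A_6(\lambda)$, and to $3$ for $A_7,A_8,A_9(\lambda)$. The organizing observation is that $\mathfrak{k}$ is invariant under both $A$ and every $g\in G$, so conjugation descends to $(g^{-1}Ag)|_{\mathfrak{k}}=(g|_{\mathfrak{k}})^{-1}(A|_{\mathfrak{k}})(g|_{\mathfrak{k}})$ with $g|_{\mathfrak{k}}=\operatorname{diag}(a,b)\in\widetilde{G}$; hence the $\widetilde{G}$-conjugacy class of $A|_{\mathfrak{k}}$ is a $G$-invariant. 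This records whether $A|_{\mathfrak{k}}$ vanishes (separating $A_1,A_2,A_3$ from $A_4,A_5,A_6$ inside degree $2$), and the position of the distinguished line $\operatorname{Im}(A)$ (for $A_1,A_2,A_3$) or $\operatorname{Ker}(A|_{\mathfrak{k}})$ (for the others) among the three $\widetilde{G}$-orbits of lines in $\mathfrak{k}$: the two coordinate lines $\operatorname{span}\{e_2\}$ and $\operatorname{span}\{e_3\}$, and the single orbit of lines meeting neither coordinate axis. This separates the three members of each triple, and degree together with vanishing of $A|_{\mathfrak{k}}$ separates the triples.

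It remains to see that $\lambda$ is a genuine modulus for $A_6(\lambda)$ and $A_9(\lambda)$, which I would get from the same descent: a diagonal conjugation $\operatorname{diag}(a,b)$ fixes the diagonal entries of a matrix on $\mathfrak{k}$, so the coefficient of $e_2$ in $A e_2$ is a $G$-invariant, and for $B_3(\lambda)$ this coefficient is exactly $\lambda$; therefore $A_6(\lambda)\cong A_6(\lambda')$ (resp. $A_9(\lambda)\cong A_9(\lambda')$) forces $\lambda=\lambda'$. The delicate point — and the step I would flag as the main obstacle — is that this argument needs the two coordinate entries to be \emph{separately} invariant, i.e. that $G$ contains no transformation swapping $e_2$ and $e_3$; this is exactly where the hypothesis $z^2\neq 1$ enters, and is precisely what distinguishes the present case from $\mathfrak{r}_{3,-1}$, whose larger automorphism group (Proposition \ref{autoLie3}) does interchange the two lines. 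Assembling nilpotency degree, vanishing of $A|_{\mathfrak{k}}$, the $\widetilde{G}$-orbit of the distinguished line, and the value $\lambda$ then separates all of $A_0,\dots,A_9$, completing the proof.
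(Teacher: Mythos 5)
Your proposal is correct, and while its existence half coincides with the paper's (the proposition is stated there precisely as a rephrasing of the reduction carried out in Subsubsection \ref{classesN2}, and your two translations --- that for $z\neq0$ the nilpotent hom-Lie structures on $\mathfrak{r}_{3,z}$ are exactly $\mathcal{N}_{2}$, and that for $z^{2}\neq1$ isomorphism is conjugacy under $\operatorname{Aut}(\mathfrak{r}_{3,z})=G$ by Proposition \ref{autoLie3} --- are the same ones the paper relies on), your treatment of the uniqueness half takes a genuinely different route. The paper dispatches that assertion in one sentence, appealing either to invariants tabulated elsewhere (Tables \ref{tab:table1}, \ref{tab:tableder} and \ref{tab:table3}: the dimension of the derivation algebra, the $\wedn{der}^{1}_{t}$-function, and the $\phi_{\beta}$-invariant, all of which are developed only in the later section on orbit closures) or to unspecified ``straightforward computations'' with the explicit automorphisms. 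You instead construct a small complete system of $G$-conjugation invariants directly: nilpotency degree; the $\widetilde{G}$-conjugacy class of $A|_{\mathfrak{k}}$, legitimate because $\mathfrak{k}$ is invariant under both $A\in\mathcal{N}_{2}$ and every $g\in G$ with $g|_{\mathfrak{k}}=\operatorname{diag}(a,b)$, so restriction intertwines the two conjugation actions; the $\widetilde{G}$-orbit of the distinguished line ($\operatorname{Im}A$ for $A_{1},A_{2},A_{3}$, $\operatorname{Ker}(A|_{\mathfrak{k}})$ for the rest), where the three orbit types you name are indeed the only ones; and the $(e_{2},e_{2})$-coefficient of $A|_{\mathfrak{k}}$, which diagonal conjugation preserves and which equals $\lambda$ for $A_{6}(\lambda)$ and $A_{9}(\lambda)$, so the modulus is rigid. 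I verified these data separate all ten families, including the degree count ($1$ for $A_{0}$, $2$ for $A_{1},\dots,A_{6}(\lambda)$, $3$ for $A_{7},A_{8},A_{9}(\lambda)$). Your argument is in effect a structured, checkable version of the paper's ``alternative'' proof, and it buys two things the table-based route does not: it is self-contained, avoiding a forward reference to invariants computed only in the subsequent section; and it isolates exactly where the hypothesis $z^{2}\neq1$ is consumed --- the absence of a swap of $e_{2}$ and $e_{3}$ in $G$ --- which dovetails with the paper's own observation, immediately after this proposition, that over $\mathfrak{r}_{3,-1}$ the extra automorphism collapses the pairs $A_{1},A_{2}$, then $A_{4},A_{5}$, and $A_{7},A_{8}$ (and identifies $\lambda$ with $-\lambda$).
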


The last assertion follows by using the invariants given in the Tables \ref{tab:table1}, \ref{tab:tableder} and \ref{tab:table3}, or alternatively, by taking advantage of the fact that we have all elements of $\operatorname{Aut}(\mathfrak{r}_{3,z})$ expressed explicitly and by showing the affirmation with
straightforward computations.

Now, by considering the group $\operatorname{Aut}(\mathfrak{r}_{3,-1} )$ , which contains the group $G$
and has (essentially) one element more: $g e_1 =-e_1$, $g e_2 = e_3$ and $g e_3 = e_2$, it is easy
to check that the linear maps $A_1$ and $A_2$ given in the above discussion are $\operatorname{Aut}(\mathfrak{r}_{3,-1} )$-similar,
and the same is true for the pairs of maps $A_4$ and $A_5$, $A_7$ and $A_8$.

\begin{proposition}
   Any hom-Lie algebra $(\mathfrak{r}_{3,-1} , A)$ with nilpotent twisting map is isomorphic
   to some hom-Lie algebra $\mathfrak{L}_{4}^{i}:= (\mathfrak{r}_{3,-1} , A_i)$ where $A_0$ is the Zero map and
   the matrix of $A_i$ with respect to the ordered basis $\{e_1,e_2,e_3\}$ is :
   \begin{center}
  \begin{tabular}{c : c}
    $A_1$ & $A_2$ \\
    $\left(
      \begin{array}{c | cc}
        0 & 0 & 0 \\
        \hline
        1 & 0 & 0 \\
        0 & 0 & 0 \\
      \end{array}
    \right)$
    &
    $\left(
      \begin{array}{c | cc}
        0 & 0 & 0 \\
        \hline
        1 & 0 & 0 \\
        1 & 0 & 0 \\
      \end{array}
    \right)$\\
  \hdashline
    $A_4$ & $A_5(\lambda)$ \\
    $\left(
      \begin{array}{c | cc}
        0 & 0 & 0 \\
        \hline
        0 & 0 & 1 \\
        0 & 0 & 0 \\
      \end{array}
    \right)$
    &
    $\left(
      \begin{array}{c | cc}
        0 & 0 & 0 \\
        \hline
        0 & \lambda &  \lambda \\
        0 & - \lambda & - \lambda \\
      \end{array}
    \right)$\\
    & $\lambda \in \mathbb{R}_{>0}$ or $\mathfrak{Im}(\lambda)>0$ \\
  \hdashline
    $A_5$ & $A_6(\lambda)$ \\
    $\left(
      \begin{array}{c | cc}
        0 & 0 & 0 \\
        \hline
        1 & 0 & 0 \\
        0 & 1 & 0 \\
      \end{array}
    \right)$
    &
    $\left(
      \begin{array}{c | cc}
        0 & 0 & 0 \\
        \hline
        1 &  \lambda &  \lambda \\
        0 & - \lambda & - \lambda \\
      \end{array}
    \right)$\\
 & $\lambda \in \mathbb{R}_{>0}$ or $\mathfrak{Im}(\lambda)>0$ \\
  \end{tabular}
   \end{center}
Two hom-Lie algebras $(\mathfrak{r}_{3,-1},A_i)$ and $(\mathfrak{r}_{3,-1},A_j)$ are isomorphic if and only if $A_i = A_j$.
\end{proposition}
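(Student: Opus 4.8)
The plan is to obtain this classification directly from the one for $\mathfrak{r}_{3,z}$ with $z(z^2-1)\neq 0$, using that $\mathfrak{r}_{3,-1}$ shares both its set of hom-Lie structures and the subgroup $G$ of its automorphism group. Since $-1\neq 0$, a linear map is a hom-Lie structure on $\mathfrak{r}_{3,-1}$ exactly when it leaves the derived algebra $\mathfrak{k}=\operatorname{span}_{\mathbb{C}}\{e_2,e_3\}$ invariant; hence the nilpotent hom-Lie structures on $\mathfrak{r}_{3,-1}$ are precisely the matrices in the set $\mathcal{N}_2$ analysed in \S\ref{classesN2}. By Proposition \ref{autoLie3}, $\operatorname{Aut}(\mathfrak{r}_{3,-1})=G\sqcup G\tau$, where $\tau$ is the involution $\tau e_1=-e_1$, $\tau e_2=e_3$, $\tau e_3=e_2$ (note $\tau^2=\operatorname{Id}$). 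Because $G$ is a normal index-two subgroup, the $\operatorname{Aut}(\mathfrak{r}_{3,-1})$-orbit of any $A\in\mathcal{N}_2$ is the union of the $G$-orbit of $A$ and the $G$-orbit of $\tau A\tau$. Thus the whole problem reduces to deciding, for the nine $G$-orbit representatives $A_1,\dots,A_9$ produced in \S\ref{classesN2}, which ones become glued by conjugation with $\tau$.

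I would carry this out by computing $\tau A_i\tau$ for each representative and identifying the $G$-orbit in which it lands. Because $\tau$ interchanges $e_2$ and $e_3$ and negates $e_1$, the expected outcome is that the pairs with ``$e_2$ versus $e_3$'' symmetry collapse: $\tau A_1\tau$ is $G$-conjugate to $A_2$, $\tau A_4\tau=A_5$, and $\tau A_7\tau$ is $G$-conjugate to $A_8$; whereas $A_3$ (with $A_3 e_1=e_2+e_3$) is carried to a map sending $e_1\mapsto-(e_2+e_3)$, which the diagonal element $\operatorname{diag}(1,-1,-1)\in G$ returns to $A_3$, so its orbit is $\tau$-invariant. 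For the two one-parameter families a short calculation should give $\tau A_6(\lambda)\tau=A_6(-\lambda)$ and $\tau A_9(\lambda)\tau$ $G$-conjugate to $A_9(-\lambda)$ (the latter after composing with a suitable element of $G$ having $a=b=-1$ and a shear that repairs the image of $e_1$). Hence $\tau$ identifies the parameters $\lambda$ and $-\lambda$ in $\mathbb{C}^{\star}$, and choosing from each pair $\{\lambda,-\lambda\}$ the one lying in $\mathbb{R}_{>0}\cup\{\mathfrak{Im}(\lambda)>0\}$ supplies a fundamental domain; this is exactly the parameter restriction recorded in the statement. Relabelling the surviving representatives, together with the zero map, then reproduces the displayed list.

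For the final ``if and only if'' I would invoke that, by Definition \ref{morfismo}, two of these hom-Lie algebras are isomorphic precisely when their twisting maps are $\operatorname{Aut}(\mathfrak{r}_{3,-1})$-conjugate. The nine maps of \S\ref{classesN2} already lie in distinct $G$-orbits, and by construction the final list keeps exactly one representative from each $\tau$-orbit of $G$-orbits, so distinct entries are non-conjugate; for $A_6(\lambda)$ and $A_9(\lambda)$ the fundamental-domain condition ensures that no two admissible parameters are related by $\lambda\mapsto-\lambda$, which is the only identification $\tau$ can create beyond the $G$-action. I expect the main difficulty to be purely one of bookkeeping: checking that each $\tau A_i\tau$ is genuinely $G$-\emph{conjugate} to---rather than merely equal to---its claimed target, which in the degenerate cases $A_3$ and $A_9(\lambda)$ forces one to exhibit the explicit auxiliary element of $G$ (a sign change together with a shear) that restores the normal form, and verifying that the chosen fundamental domain meets each orbit $\{\lambda,-\lambda\}$ in exactly one point.
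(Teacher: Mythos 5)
Your proposal is correct and follows essentially the same route as the paper: it takes the $G$-conjugacy classification of $\mathcal{N}_2$ from the $\mathfrak{r}_{3,z}$ case and then analyzes how the single extra automorphism $\tau$ ($e_1\mapsto -e_1$, $e_2\leftrightarrow e_3$) glues $G$-orbits, finding exactly the identifications $A_1\sim A_2$, $A_4\sim A_5$, $A_7\sim A_8$ and $\lambda\sim-\lambda$ in the two families, which is precisely what the paper does (your coset-decomposition framing and the explicit correcting elements of $G$, e.g.\ $\operatorname{diag}(1,-1,-1)$ and the $a=b=-1$ shear for $A_9(\lambda)$, merely make the paper's ``it is easy to check'' step explicit). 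All the asserted conjugacies check out, so no gap.
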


With respect to the hom-Lie algebra structures on $\mathfrak{r}_{3,1}$ with nilpotent twisting map,
by using that $\operatorname{Aut}(\mathfrak{r}_{3,1})$ contains to the group $G$ and studying the
$\operatorname{Aut}(\mathfrak{r}_{3,1})$-conjugacy classes of the matrices obtained
 at the beginning of \ref{classesN2}, we deduce that:

\begin{proposition}
 Any hom-Lie algebra $(\mathfrak{r}_{3,1} , A)$ with nilpotent twisting map is isomorphic
   to some hom-Lie algebra $\mathfrak{L}_{3}^{i}:= (\mathfrak{r}_{3,1} , A_i)$ where $A_0$ is the Zero map and
    the matrix of $A_i$ with respect to the ordered basis $\{e_1,e_2,e_3\}$ is
   \begin{center}
  \begin{tabular}{ c : c : c}
    $A_1$ & $A_2$ & $A_3$ \\
    $\left(
      \begin{array}{c | cc}
        0 & 0 & 0 \\
        \hline
        1 & 0 & 0 \\
        0 & 0 & 0 \\
      \end{array}
    \right)$
    &
    $\left(
      \begin{array}{c | cc}
        0 & 0 & 0 \\
        \hline
        0 & 0 & 1 \\
        0 & 0 & 0 \\
      \end{array}
    \right)$
    &
    $\left(
      \begin{array}{c | cc}
        0 & 0 & 0 \\
        \hline
        1 & 0 & 0 \\
        0 & 1 & 0 \\
      \end{array}
    \right)$
  \end{tabular}
   \end{center}
Two hom-Lie algebras $(\mathfrak{r}_{3,1},A_i)$ and $(\mathfrak{r}_{3,1},A_j)$ are isomorphic if and only if $A_i = A_j$.
\end{proposition}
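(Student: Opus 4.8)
The plan is to leverage the work already carried out in Section \ref{classesN2}. Since $\mathfrak{k} = [\mathfrak{r}_{3,1},\mathfrak{r}_{3,1}] = \operatorname{Span}_{\mathbb{C}}\{e_2,e_3\}$, a linear map is a hom-Lie structure on $\mathfrak{r}_{3,1}$ exactly when it leaves $\mathfrak{k}$ invariant; hence the nilpotent hom-Lie structures are precisely the elements of $\mathcal{N}_{2}$, whose $G$-orbits were already shown there to have representatives $A_1,\dots,A_9$. Because $\operatorname{Aut}(\mathfrak{r}_{3,1}) \supseteq G$ by Proposition \ref{autoLie3}, every $\operatorname{Aut}(\mathfrak{r}_{3,1})$-orbit is a union of these $G$-orbits, so the entire task reduces to deciding which of the nine $G$-orbits fuse under the larger group. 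The essential new ingredient is that, whereas $G$ acts on $\mathfrak{k}$ only through its diagonal torus, the block form in Proposition \ref{autoLie3} shows that $\operatorname{Aut}(\mathfrak{r}_{3,1})$ realizes the \emph{full} group $\operatorname{GL}(\mathfrak{k}) \cong \operatorname{GL}(2,\mathbb{C})$ on $\mathfrak{k}$.

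First I would isolate the $\operatorname{Aut}(\mathfrak{r}_{3,1})$-invariants that cannot change: the nilpotency degree of $A$ (invariant under all of $\operatorname{GL}(3,\mathbb{C})$) and the vanishing or non-vanishing of $A|_{\mathfrak{k}}$ (meaningful because $\mathfrak{k}$ is a characteristic ideal). These split $\mathcal{N}_{2}\setminus\{0\}$ into three families: degree $2$ with $A|_{\mathfrak{k}}=0$; degree $2$ with $A|_{\mathfrak{k}}\neq 0$; and degree $3$. Matching these against Section \ref{classesN2}, the first family is the set of $G$-orbits with $\operatorname{Im}A = \mathbb{C}\cdot Ae_1 \subseteq \mathfrak{k}$ (three orbits, distinguished there only by which line $\mathbb{C}\cdot Ae_1$ is), the second is the set of $G$-orbits with $Ae_1=0$ and $A|_{\mathfrak{k}}$ a nonzero nilpotent operator, and the third is the set of degree-$3$ orbits.

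Next I would show that each family is a single $\operatorname{Aut}(\mathfrak{r}_{3,1})$-orbit, using the extra $\operatorname{GL}(\mathfrak{k})$-freedom. For the first family, any nonzero vector of $\mathfrak{k}$ can be carried to $e_2$, so the three orbits collapse to the representative $A_1$ of the statement. For the second family, all nonzero nilpotent operators on a $2$-dimensional space are $\operatorname{GL}(\mathfrak{k})$-conjugate, so $A|_{\mathfrak{k}}$ can be put in a single Jordan form and the three orbits collapse to $A_2$. For the third (degree-$3$) family I would first normalize $A|_{\mathfrak{k}}$ to the standard nilpotent Jordan block by an element of $\operatorname{GL}(\mathfrak{k})$, and then use the remaining freedom in $\operatorname{Aut}(\mathfrak{r}_{3,1})$ (the column entries $x,y$ together with the residual scalings) to bring the cyclic vector $Ae_1$ to standard form, collapsing these to $A_3$. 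Finally, the distinctness of $A_0,A_1,A_2,A_3$ follows from the invariants isolated above -- degrees $1,2,2,3$ respectively, with $A|_{\mathfrak{k}}=0$ for $A_1$ but $A|_{\mathfrak{k}}\neq 0$ for $A_2$ -- which yields the ``if and only if'' clause at once.

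I expect the degree-$3$ case to be the main obstacle: one must check that the normalization of $A|_{\mathfrak{k}}$ and of the vector $Ae_1$ can be carried out \emph{simultaneously}, i.e. that after fixing $A|_{\mathfrak{k}}$ to the Jordan block the stabilizer of this choice inside $\operatorname{Aut}(\mathfrak{r}_{3,1})$ still acts transitively enough on the admissible values of $Ae_1$ to absorb the scalar $\lambda$ that labels the $G$-orbit $A_9(\lambda)$. The other two families are routine once the $\operatorname{GL}(\mathfrak{k})$-action is made explicit.
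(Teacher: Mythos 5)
Your proposal is correct and takes essentially the same route as the paper: reduce to the nine $G$-orbit representatives from Subsection \ref{classesN2}, fuse them under the full group $\operatorname{Aut}(\mathfrak{r}_{3,1})$ (which acts as all of $\operatorname{GL}(\mathfrak{k})$ on the derived algebra), and separate the four classes by the nilpotency degree and the vanishing of $A|_{\mathfrak{k}}$ --- with your degree-$3$ treatment matching the paper's own remark on $\widehat{G}$-similarity to a nilpotent Jordan block. The simultaneity check you flag does close: once $A|_{\mathfrak{k}}$ equals the Jordan block $B$, its stabilizer acts on $\mathfrak{k}$ by $aI+bB$ and, combined with the column parameters $x,y$, sends $Ae_1=\alpha e_2+\beta e_3$ (where $\alpha\neq 0$ is forced by degree $3$) to $e_2$ by choosing $a=\alpha$, $b=0$, $x=-\beta$.
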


\begin{remark}
An alternative proof can be given by noting that any matrix $A$ in $\mathcal{N}_{2}$
is $\widehat{G}$-similar to a matrix $\widetilde{A}$ in $\mathcal{N}_{2}$
such that $\widetilde{A} | \mathfrak{k}$ is a \textit{nilpotent Jordan block},
where $\widehat{G}$ is the subgroup of $\operatorname{Aut}(\mathfrak{r}_{3,1})$
consisting of linear maps of $\operatorname{Aut}(\mathfrak{r}_{3,1})$ that fix to the vector $e_1$.
\end{remark}

\subsubsection{$G$-conjugacy classes in $\mathcal{N}_{1} \setminus \mathcal{N}_{2}$ }\label{classesN21}

Let $A \in \mathcal{N}_{1} \setminus \mathcal{N}_{2}$. For simplicity, let us suppose that
$Ae_2 = e_1 + w$ with $w \in \mathfrak{k}$; since it is easy to see that any matrix in such
set is $G$-similar to a matrix like $A$.

If $A$ is a nilpotent matrix of degree $2$, we have $Ae_2$ spans $\operatorname{Im}(A)$.
It follows that $e_3 \in \operatorname{Ker}(A)$, since $Ae_3 \in \mathfrak{k} \cap \operatorname{Span}_{\mathbb{C}}\{Ae_2\}$.
Let $g$ be the linear map defined by $g e_1 = Ae_2$, $ge_2=e_2$ and $g e_3 = e_3$. So, $g \in G$ and $\widetilde{A} = g^{-1} A g$ is the matrix given by
\begin{itemize}
  \item $\widetilde{A} e_1 = 0$, $\widetilde{A} e_2 = g^{-1} Ae_2 = e_1$, $\widetilde{A} e_3 = 0$.
\end{itemize}

If the nilpotency degree of $A$ is $3$ and $e_3 \in \operatorname{Ker}(A)$, we have $e_2$ is
a cyclic vector for $A$, since $A e_2 \notin \operatorname{span}_{\mathbb{C}}\{e_3\}=\operatorname{Ker}(A)$.
As $A^2 e_2 \in \operatorname{Ker}(A)$ we have $A^2 e_2 = t e_3$ with $t\neq 0$. We consider the linear map $g$
defined by $ge_1 = A e_2$, $g e_2 = e_2$ and $g e_3 = t e_3 = A^2 e_2$. It is clear that $g \in G$ and
we have $\widetilde{A} = g^{-1} A g$ has the form:
\begin{itemize}
  \item $\widetilde{A} e_1 = g^{-1} A^2 e_2 =e_3$, $\widetilde{A} e_2 = g^{-1} Ae_2 = e_1$, $\widetilde{A} e_3 = 0$.
\end{itemize}

The case in which $A$ is nilpotent matrix of degree $3$ and $e_3 \notin \operatorname{Ker}(A)$,
we have $\{Ae_2 , Ae_3\}$ spans $\operatorname{Im} (A)$ and $e_3 \notin \operatorname{Ker}(A^2)$. Since $\operatorname{Im} (A)$ is
an invariant subspace of $A$, we $A |_{\operatorname{Im} (A)} $ is a nilpotent operator
of degree $2$, and so there are two cases: whether $A e_2$ is a vector in $\operatorname{Ker}A |_{\operatorname{Im} (A)}$ or not.
If $Ae_2 \in \operatorname{Ker}A |_{\operatorname{Im} (A)}$, equivalently $e_2 \in \operatorname{Ker}(A^2)$, since
$\operatorname{Im}(A) = \operatorname{Ker}(A^2)$, we have $Ae_3 = t e_2$ with $t \neq 0$ and so the cyclic basis of $\mathbb{C}^{3}$
is given by $e_3$ is $\{e_3, Ae_3 = t e_2, A^2e_3= t Ae_2\}$. Let $g$ be the linear map defined by
$ge_1 = Ae_2$, $g e_2 = e_2$ and $g e_3 = \tfrac{1}{t} e_3$. Then $g \in G$ and $\widehat{A} = g^{-1} A  g$ is define by:

\begin{itemize}
  \item $\widetilde{A} e_1 = g^{-1} A^2 e_2 =0$, $\widetilde{A} e_2 = g^{-1} Ae_2 = e_1$, $\widetilde{A} e_3 = g^{-1}e_2 =e_2$.
\end{itemize}

The remaining case, when $e_2, e_3 \notin \operatorname{Ker}(A^2)$, we require a bit more work to find a ``canonical form'' for $A$ in $\mathcal{N}_{1}\setminus \mathcal{N}_{2}$
with respect to the action of the group $G$. Let $v_2$ be a vector such that $v_2$ spans $\operatorname{Ker}(A)$. Then
\begin{eqnarray*}
v_2 &=& a Ae_2 - bAe_3
\end{eqnarray*}
with $a,b \neq 0$. We may assume that $a=1$, and so $v_2 = e_1 + w^{'}$ with $w^{'} \in \mathfrak{k}$.
Since $A^{2} e_3 \in \operatorname{Ker}(A)$, we have
\begin{eqnarray*}
A^{2} e_3 &=& \frac{\lambda}{b} v_2,\, {\lambda}\neq 0.
\end{eqnarray*}
If we write $A e_3 = x e_2 + y e_3$, we obtain
\begin{eqnarray*}
v_2 &=& \frac{b x}{\lambda}Ae_2 + \frac{by}{\lambda} Ae_3
\end{eqnarray*}
 by applying $A$ to $A e_3$, and so $bx = \lambda$, $y = -\lambda$.
 Let $g$ be the linear map defined by $g e_1 = v_2$, $g e_2 = e_2$ and $g e_3 = be_3$.
 We have $g \in G$ and  $\widetilde{A}=g^{-1} A g$ is of the form:
 \begin{itemize}
   \item $\widetilde{A} e_1 = 0$, $\widetilde{A} e_2 = e_1 + \lambda(e_2-e_3)$, $\widetilde{A} e_3 = \lambda (e_2-e_3)$
 \end{itemize}
since $\widetilde{A}e_2 - \widetilde{A}e_3 = e_1$.

It follows from subsections \ref{classesN2} and the above analysis that:

\begin{proposition}\label{L6}
   Any hom-Lie algebra $(\mathfrak{r}_{3,0}= \mathfrak{r}_2 \times \mathbb{C} , A)$ with $A$ nilpotent operator is isomorphic
   to some hom-Lie algebra $\mathfrak{L}_{6}^{i}:= (\mathfrak{r}_{2}\times \mathbb{C} , A_i)$ where $A_0$ is the zero map and
   the matrix of $A_i$ with respect to the ordered basis $\{e_1,e_2,e_3\}$ is
   \begin{center}
  \begin{tabular}{c : c : c}
    $A_1$ & $A_2$ & $A_3$ \\
    $\left(
      \begin{array}{c | cc}
        0 & 0 & 0 \\
        \hline
        1 & 0 & 0 \\
        0 & 0 & 0 \\
      \end{array}
    \right)$
    &
    $\left(
      \begin{array}{c | cc}
        0 & 0 & 0 \\
        \hline
        0 & 0 & 0 \\
        1 & 0 & 0 \\
      \end{array}
    \right)$
    &
    $\left(
      \begin{array}{c | cc}
        0 & 0 & 0 \\
        \hline
        1 & 0 & 0 \\
        1 & 0 & 0 \\
      \end{array}
    \right)$\\
 \hdashline
    $A_4$ & $A_5$ & $A_6(\lambda)$ \\
    $\left(
      \begin{array}{c | cc}
        0 & 0 & 0 \\
        \hline
        0 & 0 & 1 \\
        0 & 0 & 0 \\
      \end{array}
    \right)$
    &
    $\left(
      \begin{array}{c | cc}
        0 & 0 & 0 \\
        \hline
        0 & 0 & 0 \\
        0 & 1 & 0 \\
      \end{array}
    \right)$
    &
    $\left(
      \begin{array}{c | cc}
        0 & 0 & 0 \\
        \hline
        0 & \lambda &  \lambda \\
        0 & - \lambda & - \lambda \\
      \end{array}
    \right)$\\
  \hdashline
    $A_7$ & $A_8$ & $A_9(\lambda)$ \\
    $\left(
      \begin{array}{c | cc}
        0 & 0 & 0 \\
        \hline
        1 & 0 & 0 \\
        0 & 1 & 0 \\
      \end{array}
    \right)$
    &
    $\left(
      \begin{array}{c | cc}
        0 & 0 & 0 \\
        \hline
        0 & 0 & 1 \\
        1 & 0 & 0 \\
      \end{array}
    \right)$
    &
    $\left(
      \begin{array}{c | cc}
        0 & 0 & 0 \\
        \hline
        1 &  \lambda &  \lambda \\
        0 & - \lambda & - \lambda \\
      \end{array}
    \right)$\\
     \hdashline
     & $A_{10}$ &  \\
      &
    $\left(
      \begin{array}{ccc}
        0 & 1 & 0 \\
        0 & 0 & 0 \\
        0 & 0 & 0 \\
      \end{array}
    \right)$
    &\\
    \hdashline
    $A_{11}$ & $A_{12}$ & $A_{13}(\lambda)$ \\
    $\left(
      \begin{array}{ccc}
        0 & 1 & 0 \\
        0 & 0 & 0 \\
        1 & 0 & 0 \\
      \end{array}
    \right)$
    &
    $\left(
      \begin{array}{ccc}
        0 & 1 & 0 \\
        0 & 0 & 1 \\
        0 & 0 & 0 \\
      \end{array}
    \right)$
    &
    $\left(
      \begin{array}{c cc}
        0 & 1 & 0 \\
        0 &  \lambda &  \lambda \\
        0 & - \lambda & - \lambda \\
      \end{array}
    \right)$
  \end{tabular}
   \end{center}
Two hom-Lie algebras $(\mathfrak{r}_{2} \times \mathbb{C},A_i)$ and $(\mathfrak{r}_{2}\times \mathbb{C},A_j)$ are isomorphic if and only if $A_i = A_j$.
\end{proposition}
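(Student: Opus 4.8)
The plan is to recast the statement as a problem about $G$-conjugacy classes of nilpotent matrices and then simply assemble the two computations already carried out in \S\ref{classesN2} and \S\ref{classesN21}. By Proposition \ref{autoLie3}, $\operatorname{Aut}(\mathfrak{r}_2\times\mathbb{C})$ is exactly the group $G$ (there are \emph{no} extra automorphisms, unlike the cases $\mathfrak{r}_{3,\pm 1}$), and by the observation opening Section \ref{homLie3} two hom-Lie structures $(\mathfrak{r}_2\times\mathbb{C},A)$ and $(\mathfrak{r}_2\times\mathbb{C},B)$ are isomorphic iff $A$ and $B$ are $G$-conjugate. Since a linear map is a twisting map for $\mathfrak{r}_2\times\mathbb{C}$ precisely when $Ae_3\in\mathfrak{k}:=\operatorname{Span}_{\mathbb{C}}\{e_2,e_3\}$, the nilpotent twisting maps form exactly the set $\mathcal{N}_1$, and the classification up to isomorphism is the set of $G$-orbits in $\mathcal{N}_1$. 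The zero map accounts for $\mathfrak{L}_6^0$.

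Next I would use the decomposition $\mathcal{N}_1=\mathcal{N}_2\,\sqcup\,(\mathcal{N}_1\setminus\mathcal{N}_2)$ into $G$-invariant pieces: $\mathcal{N}_2$ is the locus where $A$ leaves $\mathfrak{k}$ invariant, and $G$ preserves $\mathfrak{k}$, so conjugation by $G$ sends $\mathcal{N}_2$ into itself and likewise its complement. Hence the $G$-orbits in $\mathcal{N}_1$ are the $G$-orbits in $\mathcal{N}_2$ together with those in $\mathcal{N}_1\setminus\mathcal{N}_2$. The first list was computed in \S\ref{classesN2} (this is the same computation as for $\mathfrak{r}_{3,z}$, $z(z^2-1)\neq 0$, since $\operatorname{Aut}$ there is also $G$ and the hom-Lie condition there is precisely $A\mathfrak{g}'=A\mathfrak{k}\subseteq\mathfrak{k}$): representatives $A_1,\dots,A_9(\lambda)$. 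The second list was computed in \S\ref{classesN21}: after reducing any $A\in\mathcal{N}_1\setminus\mathcal{N}_2$ to one with $Ae_2=e_1+w$, $w\in\mathfrak{k}$, and splitting on the nilpotency degree of $A$ and on the positions of $\operatorname{Ker}A$, $\operatorname{Ker}A^2$ relative to $\mathfrak{k}$, one obtains exactly the four canonical forms $A_{10}$, $A_{11}$, $A_{12}$, $A_{13}(\lambda)$. Concatenating the two lists gives precisely the family in the statement, with no repetitions across the two blocks because $\mathcal{N}_2$ is $G$-stable.

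It then remains to prove the members of the list are pairwise non-$G$-conjugate. I would do this with a short list of $\operatorname{Aut}(\mathfrak{r}_2\times\mathbb{C})$-invariants, as recorded in Tables \ref{tab:table1}, \ref{tab:tableder} and \ref{tab:table3}: (i) membership in $\mathcal{N}_2$ separates $\{A_1,\dots,A_9(\lambda)\}$ from $\{A_{10},\dots,A_{13}(\lambda)\}$; (ii) $\operatorname{rank}A$, $\operatorname{rank}A^2$, together with whether $\operatorname{Im}A\subseteq\mathfrak{k}$, $\operatorname{Ker}A\subseteq\mathfrak{k}$, $e_2\in\operatorname{Im}A$, etc.\ — all $G$-invariant since $G$ fixes $\mathfrak{g}'=\operatorname{Span}_{\mathbb{C}}\{e_2\}$ and $Z(\mathfrak{g})=\operatorname{Span}_{\mathbb{C}}\{e_3\}$ — separate the non-parametrized matrices within each block, and $\dim\operatorname{Der}(\mathfrak{r}_2\times\mathbb{C},A_i)$ gives an independent cross-check (by Corollary \ref{niceDer} it is also what is needed later for degenerations); and (iii) for the one-parameter families, the scalar $\beta(A)$ determined by $Ae_3\equiv\beta(A)\,e_3\pmod{\mathfrak{g}'}$, i.e.\ the scalar by which $A$ acts on $Z(\mathfrak{g})\hookrightarrow\mathfrak{k}\twoheadrightarrow\mathfrak{k}/\mathfrak{g}'\cong Z(\mathfrak{g})$, is an honest $G$-invariant equal to $-\lambda$ on $A_6(\lambda)$, $A_9(\lambda)$ and $A_{13}(\lambda)$, so distinct $\lambda$ give non-isomorphic structures. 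Alternatively, since all of $G=\operatorname{Aut}(\mathfrak{r}_2\times\mathbb{C})$ is available explicitly, one may just solve $g^{-1}A_ig=A_j$ and check it forces $A_i=A_j$.

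The main obstacle, and essentially the only nonroutine point, is the bookkeeping behind \S\ref{classesN21}: verifying that the case split (nilpotency degree $2$ versus $3$; $e_3\in\operatorname{Ker}A$ or not; $e_2,e_3\in\operatorname{Ker}A^2$ or not) is exhaustive and that the reductions stay inside $\mathcal{N}_1\setminus\mathcal{N}_2$, and — for the parametrized families — pinning down a genuine continuous invariant; the scalar $\beta(A)$ above settles the latter cleanly. Everything else is the linear algebra already displayed in the two subsections.
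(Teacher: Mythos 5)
Your proposal is correct and follows essentially the same route as the paper: the paper's proof of this proposition is precisely the concatenation of the $G$-orbit computations in \S\ref{classesN2} (for $\mathcal{N}_2$, yielding $A_1,\dots,A_9(\lambda)$) and \S\ref{classesN21} (for $\mathcal{N}_1\setminus\mathcal{N}_2$, yielding $A_{10},\dots,A_{13}(\lambda)$), using that $\operatorname{Aut}(\mathfrak{r}_2\times\mathbb{C})=G$ and that the nilpotent twisting maps are exactly $\mathcal{N}_1$. Your explicit invariants for pairwise non-conjugacy (in particular the $G$-invariant coefficient of $e_3$ in $Ae_3$, which pins down $\lambda$ in the one-parameter families) are a welcome addition to a step the paper leaves to the reader.
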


\subsection{hom-Lie structures on $\mathfrak{r}_{3}(\mathbb{C})$ }

It is easy to verify that $ \operatorname{hom-Lie}(\mathfrak{r}_{3}(\mathbb{C}))$ represented by matrices with
respect to the ordered basis $\{e_1,e_2,e_3\}$ is the set $\mathcal{N}_{2}$; as in \ref{classesN2}. So, we have to study conjugacy classes
on $\mathcal{N}_{2}$ with respect to the natural action of
\begin{eqnarray*}
\operatorname{Aut}(\mathfrak{r}_{3}(\mathbb{C})) &=& \left\{
\left[
\begin {array}{c | c c}
1&0&0\\
\hline
x&a&z\\
y&0&a\end {array}
\right]
\in M(3,\mathbb{C})
:
a \in \mathbb{C}^{*} \right\}.
\end{eqnarray*}

Recall all elements of $\mathcal{N}_{2}$ leave $\mathfrak{k} =  \operatorname{Span}_{\mathbb{C}}\{e_2,e_3\}$ invariant.
An analysis similar to the one in \ref{classesN2} shows that any $A \in \mathcal{N}_{2}$ is
$\operatorname{Aut}(\mathfrak{r}_{3}(\mathbb{C}))$-similar to a matrix $\widetilde{A} \in \mathcal{N}_{2}$
such that $\widetilde{A}|_{\mathfrak{k}}$ is the zero map or

\begin{itemize}
  \item[$\star$] $\widetilde{A} e_2 =0$, $\widetilde{A} e_3 = \lambda e_2$ with $\lambda \in \mathbb{C}^{\star}$
  \item[$\star$] $\widetilde{A} e_2 =\lambda e_3$, $\widetilde{A} e_3 = 0$ with $\lambda \in \mathbb{C}^{\star}$,
\end{itemize}

depending on whether $e_2$ spans $\operatorname{Ker} A |_\mathfrak{k}$ or not.
For instance, if $e_2 \notin \operatorname{Ker} A |_\mathfrak{k}$, then
there exists a vector $v_1 \in \mathfrak{k}$ of the form $z e_2 + a e_3$, $a\neq 0$,
that spans $\operatorname{Ker} A |_\mathfrak{k}$. Since $Ae_2 \in \operatorname{Ker} A |_\mathfrak{k}$,
we have $Ae_2 = \tfrac{\lambda}{a} v_1$ with $\lambda \neq 0$. Let $g$ be the linear map defined by
$g e_1 = e_1$, $g e_2 = a e_2$ and $g e_3 = v_1 = z e_2 + a e_3$. Clearly $g \in \operatorname{Aut}(\mathfrak{r}_{3}(\mathbb{C}))$
and $\widetilde{A}= g^{-1} A g$  satisfies $\widetilde{A} e_2 = g^{-1} (\lambda v_1) = \lambda e_3$ and $\widetilde{A} e_3 = 0$.

And so, if we repeat the same reasoning as in \ref{classesN2}, by considering the nilpotency degree of $A$, it is easy
to verify that $A$ is $\operatorname{Aut}(\mathfrak{r}_{3}(\mathbb{C}))$-similar to a matrix $\widehat{A}$ such that
$\widehat{A}e_1 \in \{0 , e_2,e_3\}$ and $\widehat{A}|_{\mathfrak{k}} = \widetilde{A}|_{\mathfrak{k}}$.
For example, if $A$ is a nilpotent matrix of degree $3$ and $\widetilde{A}$ is as given in the first type: $\widetilde{A}e_2=0$
and $\widetilde{A}e_3 = \lambda e_2$. Since $\operatorname{Im}\widetilde{A} = \mathfrak{k}$,  there must
exist a vector $v_2 = e_1 + w$ with $w\in \mathfrak{k}$ such that $\widetilde{A} v_2 = t e_3$ with $t \neq 0$. Let $g$ be the
linear map defined by $g e_1 = v_2$, $ge_2 = t e_2$, $ge_3 = te_3$, then $\widehat{A} = g^{1}\widetilde{A}g$ satisfies $\widehat{A}e_1 = e_3$ ,
$\widehat{A} e_2 =0$ and $\widehat{A} e_3 = \lambda e_2$.

\begin{proposition}
   Any hom-Lie algebra $(\mathfrak{r}_{3} , A)$ with $A$ nilpotent operator is isomorphic
   to some hom-Lie algebra $\mathfrak{L}_{2}^{i}:= (\mathfrak{r}_{3} , A_i)$ where $A_0$ is the Zero map and
   the matrix of $A_i$ with respect to the ordered basis $\{e_1,e_2,e_3\}$ is:
   \begin{center}
  \begin{tabular}{c : c}
    $A_1$ & $A_2$ \\
    $\left(
      \begin{array}{c | cc}
        0 & 0 & 0 \\
        \hline
        1 & 0 & 0 \\
        0 & 0 & 0 \\
      \end{array}
    \right)$
    &
    $\left(
      \begin{array}{c | cc}
        0 & 0 & 0 \\
        \hline
        0 & 0 & 0 \\
        1 & 0 & 0 \\
      \end{array}
    \right)$\\
 \hdashline
    $A_3(\lambda)$ & $A_4(\lambda)$ \\
    $\left(
      \begin{array}{c | cc}
        0 & 0 & 0 \\
        \hline
        0 & 0 & \lambda \\
        0 & 0 & 0 \\
      \end{array}
    \right)$
    &
    $\left(
      \begin{array}{c | cc}
        0 & 0 & 0 \\
        \hline
        0 & 0 & 0 \\
        0 & \lambda & 0 \\
      \end{array}
    \right)$\\
  \hdashline
    $A_5(\lambda)$ & $A_6(\lambda)$  \\
    $\left(
      \begin{array}{c | cc}
        0 & 0 & 0 \\
        \hline
        0 & 0 & \lambda \\
        1 & 0 & 0 \\
      \end{array}
    \right)$
    &
    $\left(
      \begin{array}{c | cc}
        0 & 0 & 0 \\
        \hline
        1 & 0 & 0 \\
        0 & \lambda & 0 \\
      \end{array}
    \right)$
  \end{tabular}
   \end{center}
Two hom-Lie algebras $(\mathfrak{r}_{3},A_i)$ and $(\mathfrak{r}_{3},A_j)$ are isomorphic if and only if $A_i = A_j$.
\end{proposition}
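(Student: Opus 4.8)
The plan is to translate the statement into a problem about $\operatorname{Aut}(\mathfrak{r}_{3}(\mathbb{C}))$-conjugacy classes and then stratify by the action on the derived ideal $\mathfrak{k}=\operatorname{Span}_{\mathbb{C}}\{e_2,e_3\}$. Since, as observed above, the nilpotent hom-Lie structures on $\mathfrak{r}_{3}(\mathbb{C})$ are exactly the elements of $\mathcal{N}_{2}$, and since by Definition \ref{morfismo} two such structures are isomorphic precisely when their twisting maps are conjugate by an element of $\operatorname{Aut}(\mathfrak{r}_{3}(\mathbb{C}))$ (given in Proposition \ref{autoLie3}), the whole assertion reduces to exhibiting an irredundant set of orbit representatives. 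Every $A\in\mathcal{N}_{2}$ leaves $\mathfrak{k}$ invariant, so the first step is to classify the nilpotent operator $A|_{\mathfrak{k}}$ up to the induced action of $\operatorname{Aut}(\mathfrak{r}_{3}(\mathbb{C}))|_{\mathfrak{k}}$.

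First I would note that the restriction $h=g|_{\mathfrak{k}}$ of an automorphism is, in the basis $\{e_2,e_3\}$, upper triangular with equal diagonal entries; in particular it fixes the eigenline $\mathbb{C}e_2$ and never carries a line with nonzero $e_3$-component into it. A two-dimensional nilpotent operator is either zero or rank one with $\operatorname{Ker}=\operatorname{Im}$ equal to a single line $L$. If $L=\mathbb{C}e_2$ (``type one'': $e_2\mapsto 0$, $e_3\mapsto\lambda e_2$) then the direct identity $h^{-1}(A|_{\mathfrak{k}})h=A|_{\mathfrak{k}}$ shows the off-diagonal parameter $\lambda$ is fixed; if $L\neq\mathbb{C}e_2$ then a suitable choice of the off-diagonal entry of $h$ moves $L$ to $\mathbb{C}e_3$ (``type two'': $e_2\mapsto\lambda e_3$, $e_3\mapsto 0$), and comparing two such normal forms forces the conjugating $h$ to preserve $\mathbb{C}e_3$, hence $h=aI$, which again fixes $\lambda$. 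Thus $\lambda\in\mathbb{C}^{\star}$ is a genuine modulus, not a removable scale, and type one and type two are never conjugate because their lines $L$ sit differently with respect to the eigenline $\mathbb{C}e_2$. This invariance of $\lambda$ is exactly the feature that makes $\mathfrak{r}_{3}(\mathbb{C})$ behave differently from $\mathfrak{r}_{3,z}(\mathbb{C})$ (with distinct diagonal scalings), and it is the step I expect to be the main obstacle: everything downstream is trustworthy only once $\lambda$ is known to be an invariant.

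Next, for each of the three shapes of $A|_{\mathfrak{k}}$ I would normalize $Ae_1\in\mathfrak{k}$. Replacing $e_1$ by $e_1+w$ with $w\in\mathfrak{k}$ (an automorphism) adds $A|_{\mathfrak{k}}(w)$ to $Ae_1$ and leaves $A|_{\mathfrak{k}}$ unchanged, so one can cancel the component of $Ae_1$ lying in $\operatorname{Im}(A|_{\mathfrak{k}})$; a residual nonzero component is then normalized to a basis vector by a scalar $g|_{\mathfrak{k}}=tI$ (which fixes $\lambda$), exactly as carried out in \ref{classesN2}. Splitting according to the nilpotency degree of the resulting $A$ yields $A_{0}=0$; the rank-one, degree-two representatives $A_{1},A_{2}$ (from $A|_{\mathfrak{k}}=0$) and $A_{3}(\lambda),A_{4}(\lambda)$ (from types one and two with $Ae_1=0$); and the rank-two, degree-three representatives $A_{5}(\lambda),A_{6}(\lambda)$. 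This is the routine bookkeeping part.

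Finally, for the ``if and only if'' I would separate the representatives by conjugation-invariant data read off $\mathfrak{k}$: the rank and nilpotency degree of $A$ isolate $A_0$, the family $\{A_1,A_2,A_3,A_4\}$, and the family $\{A_5,A_6\}$; the number $\dim(\operatorname{Ker}A\cap\mathfrak{k})$ together with whether $A|_{\mathfrak{k}}=0$ separates $A_1$ from $A_3(\lambda)$ and $A_2$ from $A_4(\lambda)$; the position of $\operatorname{Im}(A)$ or of $\operatorname{Ker}(A|_{\mathfrak{k}})$ relative to the eigenline $\mathbb{C}e_2$ separates $A_1$ from $A_2$, $A_3$ from $A_4$, and $A_5$ from $A_6$; and the invariant $\lambda$ separates members of a single one-parameter family. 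These are precisely the invariants tabulated in Tables \ref{tab:table1}, \ref{tab:tableder} and \ref{tab:table3}, so alternatively one may simply invoke them to conclude irredundancy.
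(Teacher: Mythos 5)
Your proposal is correct and follows essentially the same route as the paper: reduce to $\operatorname{Aut}(\mathfrak{r}_{3}(\mathbb{C}))$-conjugacy on $\mathcal{N}_{2}$, first normalize $A|_{\mathfrak{k}}$ to zero or to one of the two rank-one types with the parameter $\lambda$ surviving as a modulus (precisely because the restricted automorphisms are upper triangular with equal diagonal entries), then normalize $Ae_{1}$ according to the nilpotency degree via translations $e_{1}\mapsto e_{1}+w$ and scalings, with irredundancy settled by the same invariants the paper tabulates. One cosmetic remark: in the case $A|_{\mathfrak{k}}=0$ with $Ae_{1}=\alpha e_{2}+\beta e_{3}$ and $\beta\neq 0$, the reduction to $A_{2}$ needs the off-diagonal entry of $g|_{\mathfrak{k}}$ as well, not a scalar $tI$ alone --- but this is exactly the mechanism you already used to move the line $L$ to $\mathbb{C}e_{3}$, so nothing essential is missing.
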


\subsection{hom-Lie structures on $\mathfrak{n}_{3}(\mathbb{C})$}

Note that any liner map on a $2$-step nilpotent Lie algebra gives a hom-Lie algebra structure.
In this case, we need to study $\operatorname{Aut}(\mathfrak{h}_{3}(\mathbb{C}))$-conjugation classes
of nilpotent matrices in $M(3,\mathbb{C})$.
Since $\operatorname{span}_{\mathbb{C}}\{e_3\}$ is an invariant subspace for any automorphism in $\operatorname{Aut}(\mathfrak{h}_{3}(\mathbb{C}))$,
we will consider several cases for a hom-Lie algebra structure $A$ on $\mathfrak{h}_{3}(\mathbb{C})$, with $A$ nilpotent matrix, depending on how
$e_3$ is related with $\operatorname{Ker}(A)$ or $\operatorname{Im}(A)$ and the nilpotency degree of $A$.

${}$

First, we suppose that $A^2=0$ and $A\neq0$.

${}$

\textbf{Case 1:}  $e_3 \in \operatorname{Im}(A) \subset \operatorname{Ker}(A)$. And so, let $v_1 \in \mathbb{C}^{3}$ such that $Av_1 = e_3$
and let $v_2 \in \operatorname{Ker}(A)$ be a linear complement of $e_3$ in $\operatorname{Ker}(A)$. Define $g$ the linear map
given by $g e_1 = \tfrac{1}{\lambda} v_2$, $g e_2 = v_1$ and $g e_3 = e_3$, where $\lambda = e^{1}\wedge e^{2}(v_1,v_2)$.
We have $g\in \operatorname{Aut}(\mathfrak{h}_{3}(\mathbb{C}))$ and $\widetilde{A}= g^{-1}Ag$ satisfies

\begin{itemize}
  \item $\widetilde{A} e_1 = 0$, $\widetilde{A} e_2 = e_3$  and $\widetilde{A}e_3 = 0$
\end{itemize}

\textbf{Case 2:} $e_3 \in \operatorname{Ker}(A)$ and $e_3 \notin \operatorname{Im}(A)$. Let $v_2 \in \operatorname{Im}(A)$
and $v_1 \in \mathbb{C}^{3}$ such that $A v_1 = v_2$. We have $\{v_1,v_2,e_3\}$ is a basis of $\mathbb{C}^{3}$
and the linear transformation $g$ given by $g e_1 = v_2$, $g e_2 = v_1$ and $g e_3 = \lambda e_3$
with $\lambda = e^{1}\wedge e^{2}(v_1,v_2)$ is in $g\in \operatorname{Aut}(\mathfrak{h}_{3}(\mathbb{C}))$.
Let $\widetilde{A}= g^{-1}Ag$, we have

\begin{itemize}
  \item $\widetilde{A} e_1 = 0$, $\widetilde{A} e_2 = e_1$  and $\widetilde{A}e_3 = 0$
\end{itemize}

\textbf{Case 3:} $e_3 \notin \operatorname{Ker}(A)$. Let $v_2 = A e_3$. Since $v_2 \in \operatorname{Ker}(A)$, let $v_1$ be a linear complement of $v_2$ in $\operatorname{Ker}(A)$.
Consider the linear map defined by $g e_1 = \tfrac{1}{\lambda}v_1$, $g e_2 = v_2 $ and $g e_3 = e_3$ with $\lambda = e^{1}\wedge e^{2}(v_1,v_2)$.
Then $g \in \operatorname{Aut}(\mathfrak{h}_{3}(\mathbb{C}))$ and $\widetilde{A} = g^{1} A g$ is such that:
\begin{itemize}
  \item $\widetilde{A} e_1 = 0$, $\widetilde{A} e_2 = 0$  and $\widetilde{A}e_3 = e_2$
\end{itemize}

Now, if the degree of nilpotency of $A$ is $3$, we have $\operatorname{dim}\operatorname{Ker}(A)=1$ and $\operatorname{Ker}(A) \subset \operatorname{Im}(A)$.

${}$

\textbf{Case 4:} $e_3 \in \operatorname{Ker}(A) \subseteq \operatorname{Im}(A)$. Let $v_2$ be a linear complement
of $e_3$ in $\operatorname{Im}(A)$ and let $v_1$ be a vector such that $A v_1 = v_2$. Since $Av_2 = A^{2}v_1 \in \operatorname{Ker}(A)$,
$Av_2 = \lambda e_3$ with $\lambda \neq 0$. We have $\{v_1, v_2, \lambda e_3\}$ is a cyclic basis for $\mathbb{C}^{3}$ from which it follows
that the linear map $g$ defined by $g e_1 = \alpha v_1$, $g e_2 = \alpha v_2$ and $g e_3 = \alpha^2 e^{1}\wedge e^{2}(v_1,v_2)$
with $\alpha = \tfrac{\lambda}{e^{1}\wedge e^{2}(v_1,v_2)}$ is an automorphism of $\mathfrak{h}_{3}(\mathbb{C})$ and $\widetilde{A} = g^{-1} A g$
satisfies
\begin{itemize}
  \item $\widetilde{A} e_1 = e_2$, $\widetilde{A} e_2 = e_3$  and $\widetilde{A}e_3 = 0$
\end{itemize}

\textbf{Case 5:} $e_3 \in \operatorname{Im}(A)$ and $e_3 \notin \operatorname{Ker}(A)$. Let $v_1$ a vector in $\mathbb{C}^{3}$
such that $Av_1 =e_3$ and let $v_2 = Ae_3$. We have $\{\lambda v_1, \lambda e_3, \lambda v_2\}$ is a cyclic basis for $\mathbb{C}^{3}$
and the linear map $g$ given by $g e_1 = \lambda v_1$, $g e_2 = \lambda v_2$ and $g e_3 = \lambda e_3$ with $\lambda = \tfrac{1}{e^{1}\wedge e^{2}(v_1,v_2)}$
is in $\operatorname{Aut}(\mathfrak{h}_{3}(\mathbb{C}))$. The linear map $\widetilde{A} = g^{-1} A g$ is such that:
\begin{itemize}
  \item $\widetilde{A} e_1 = e_3$, $\widetilde{A} e_2 = 0$  and $\widetilde{A}e_3 = e_2$
\end{itemize}

\textbf{Case 6:} $e_3 \notin \operatorname{Im}(A)$. And so, $e_3 \notin \operatorname{Ker}(A^2)$ and
$\{e_3, v_1:=A e_3, v_2:=A^{2} e_3\}$ is a cyclic basis for $\mathbb{C}^{3}$ and the linear map
$g$ defined by $g e_1 = \lambda v_2$, $g e_2 = \lambda v_1$ and $g e_3 = \lambda e_3$ with $\lambda = \tfrac{1}{e^{1}\wedge e^{2}(v_1,v_2)}$
is such that
\begin{itemize}
  \item $\widetilde{A} e_1 = 0$, $\widetilde{A} e_2 = e_1$  and $\widetilde{A}e_3 = e_2$
\end{itemize}

The following proposition summarizes the computations above.

\begin{proposition}
   Any hom-Lie algebra $(\mathfrak{h}_{3}(\mathbb{C}) , A)$ with $A$ nilpotent operator is isomorphic
   to some hom-Lie algebra $\mathfrak{L}_{1}^{i}:= (\mathfrak{h}_{3}(\mathbb{C}) , A_i)$ where $A_0$ is the Zero map and
   the matrix of $A_i$ with respect to the ordered basis $\{e_1,e_2,e_3\}$ is:
   \begin{center}
  \begin{tabular}{c : c : c}
    $A_1$ & $A_2$ & $A_3$ \\
    $\left(
      \begin{array}{c  cc}
        0 & 0 & 0 \\
        0 & 0 & 0 \\
        0 & 1 & 0 \\
      \end{array}
    \right)$
    &
    $\left(
      \begin{array}{ccc}
        0 & 1 & 0 \\
        0 & 0 & 0 \\
        0 & 0 & 0 \\
      \end{array}
    \right)$
     &
    $\left(
      \begin{array}{ccc}
        0 & 0 & 0 \\
        0 & 0 & 1 \\
        0 & 0 & 0 \\
      \end{array}
    \right)$\\
 \hdashline
    $A_4$ & $A_5$ & $A_6$ \\
    $\left(
      \begin{array}{ccc}
        0 & 1 & 0 \\
        0 & 0 & 1 \\
        0 & 0 & 0 \\
      \end{array}
    \right)$
    &
    $\left(
      \begin{array}{ccc}
        0 & 0 & 0 \\
        1 & 0 & 0 \\
        0 & 1 & 0 \\
      \end{array}
    \right)$
    &
    $\left(
      \begin{array}{ccc}
        0 & 0 & 0 \\
        0 & 0 & 1 \\
        1 & 0 & 0 \\
      \end{array}
    \right)$\\
  \end{tabular}
   \end{center}
Two hom-Lie algebras $(\mathfrak{h}_{3}(\mathbb{C}),A_i)$ and $(\mathfrak{h}_{3}(\mathbb{C}),A_j)$ are isomorphic if and only if $A_i = A_j$.
\end{proposition}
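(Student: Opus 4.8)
The plan is to turn the classification into a conjugacy problem for nilpotent $3\times 3$ matrices and then to run a short case analysis organised around the distinguished line $\mathbb{C}e_3=[\mathfrak{h}_3(\mathbb{C}),\mathfrak{h}_3(\mathbb{C})]=Z(\mathfrak{h}_3(\mathbb{C}))$.

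First I would observe that $\mathfrak{h}_3(\mathbb{C})$ is $2$-step nilpotent, so $x\cdot(y\cdot z)=0$ for all $x,y,z$, and hence $\operatorname{Jac}_{(\cdot,A)}\equiv 0$ for \emph{every} linear map $A$; thus $\operatorname{hom-Lie}(\mathfrak{h}_3(\mathbb{C}))=\operatorname{End}(\mathbb{C}^3)$, and among these the ones with nilpotent twisting map form the nilpotent cone. By Definition~\ref{morfismo} and the discussion opening Section~\ref{homLie3}, $(\mathfrak{h}_3(\mathbb{C}),A)$ and $(\mathfrak{h}_3(\mathbb{C}),B)$ are isomorphic hom-Lie algebras precisely when $A$ and $B$ are conjugate by an element of $\operatorname{Aut}(\mathfrak{h}_3(\mathbb{C}))$. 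By Proposition~\ref{autoLie3} this group consists of the lower-block-triangular matrices that act on $\operatorname{span}_{\mathbb{C}}\{e_1,e_2\}$ by an arbitrary element of $\operatorname{GL}(2,\mathbb{C})$, scale $e_3$ by the determinant of that element, and may shift $e_1,e_2$ into $\mathbb{C}e_3$. The point I would keep in mind throughout is that the scaling on $e_3$ is \emph{rigidly tied} to the $\operatorname{GL}(2,\mathbb{C})$-part, so one cannot simply invoke the rational (or Jordan) canonical form of a single nilpotent operator: every candidate change of basis has to be checked to lie in $\operatorname{Aut}(\mathfrak{h}_3(\mathbb{C}))$.

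Next, for $A\neq 0$ I would stratify by nilpotency degree, and within each degree by the position of the invariant line $\mathbb{C}e_3$ relative to $\operatorname{Ker}A$ and $\operatorname{Im}A$. If $A^2=0$ there are three cases: (i)~$e_3\in\operatorname{Im}A\subseteq\operatorname{Ker}A$, (ii)~$e_3\in\operatorname{Ker}A$ but $e_3\notin\operatorname{Im}A$, (iii)~$e_3\notin\operatorname{Ker}A$. If $A^3=0$ and $A^2\neq 0$ --- so that $\operatorname{Ker}A$ is a line inside the plane $\operatorname{Im}A$ --- there are again three: (iv)~$e_3\in\operatorname{Ker}A$, (v)~$e_3\in\operatorname{Im}A\setminus\operatorname{Ker}A$, (vi)~$e_3\notin\operatorname{Im}A$. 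In each case I would build a Jordan/cyclic basis $\{v_1,v_2,v_3\}$ adapted to $A$ and then rescale it so that the corresponding change of basis $g$ lands in $\operatorname{Aut}(\mathfrak{h}_3(\mathbb{C}))$; concretely, $v_3$ should be taken to be $e^{1}\wedge e^{2}(v_1,v_2)$ times the would-be image of $e_3$, which is exactly the determinant bookkeeping demanded by Proposition~\ref{autoLie3}. This produces the six normal forms of the statement --- in order, the cases (i)--(vi) yield $A_1,A_2,A_3,A_5,A_6,A_4$ --- together with the trivial form $A_0=0$.

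Finally I would establish that the list has no repetitions. Nilpotency degree separates $\{A_1,A_2,A_3\}$ from $\{A_4,A_5,A_6\}$. Since every automorphism of $\mathfrak{h}_3(\mathbb{C})$ preserves $\mathbb{C}e_3$ as well as $\operatorname{Ker}A$ and $\operatorname{Im}A$, the incidence pattern of $\mathbb{C}e_3$ with $\operatorname{Ker}A$ and $\operatorname{Im}A$ is a conjugacy invariant, and it takes three distinct values on the three matrices inside each degree class --- alternatively the distinction is visible in the invariants collected in Tables~\ref{tab:table1}, \ref{tab:tableder} and \ref{tab:table3}. Hence $(\mathfrak{h}_3(\mathbb{C}),A_i)\cong(\mathfrak{h}_3(\mathbb{C}),A_j)$ forces $A_i=A_j$. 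I expect the only genuinely delicate part to be the degree-$3$ cases (iv)--(vi): there one has to exhibit a full cyclic basis for $A$ and simultaneously tune its scaling to respect the link between the action on $\operatorname{span}_{\mathbb{C}}\{e_1,e_2\}$ and the scaling on $e_3$, so that a naive reduction using only the structure of the nilpotent operator $A$ does not suffice.
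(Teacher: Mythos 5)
Your proposal is correct and follows essentially the same route as the paper: both reduce to $\operatorname{Aut}(\mathfrak{h}_{3}(\mathbb{C}))$-conjugacy of nilpotent matrices (using that every linear map on a $2$-step nilpotent Lie algebra satisfies the hom-Jacobi identity) and then run the same six-case analysis by nilpotency degree and the position of $\mathbb{C}e_{3}$ relative to $\operatorname{Ker}A$ and $\operatorname{Im}A$, with the same determinant bookkeeping $e^{1}\wedge e^{2}(v_1,v_2)$ to force the change of basis into the automorphism group. Your explicit invariant-based argument for the pairwise non-isomorphism (the incidence pattern of $\mathbb{C}e_{3}$ with $\operatorname{Ker}A$ and $\operatorname{Im}A$, which is preserved since $\mathbb{C}e_{3}$ is the derived algebra) is a welcome addition that the paper leaves implicit.
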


\section{Classification of Orbit Closures}

The aim of this section is to study the partial order given by the degeneration relation on the family of hom-Lie algebras obtained above.
We want to provide useful invariants of hom-Lie algebras which are preserved under the process of degeneration
and can be easily computed/verified. In addition to the above requirements, we also want to take advantage of the well-known
classification of orbit closures of $3$-dimensional complex Lie algebra (\cite{B2}).

We begin by studying degenerations between hom-Lie algebras given in section \ref{homLie3} with isomorphic underlying Lie algebra. By a case-by-case analysis, we have seen
that a hom-Lie algebra $\mathfrak{L}_{j}^{i}=(\mathbb{C}^{3},\mu,A_i)$ degenerates to a hom-Lie algebra $\mathfrak{L}_{j}^{k}=(\mathbb{C}^{3},\mu,A_k)$ if and only if $A_k$ in the Euclidean closure of $\operatorname{Aut}(\mathfrak{L}_{j}) \bullet A_{i}$.

\subsection{Invariants}

As we observed earlier in the Corollary \ref{niceDer}, the dimension of the algebra of Derivations of a hom-Lie algebra
is an important invariant to study degenerations. By elementary computations, such invariant for each
hom-Lie algebra $\mathfrak{L}_{j}^{i}$ is given in Table \ref{tab:table1}.

\subsubsection{Lie algebra realizations by hom-Lie algebras}

Let $\Omega$ be a finite subset of $\mathbb{Z}^{3}_{\geq 0}$, and let $(\alpha_{\lambda})_{\lambda \in \Omega}$ be a
family of complex constants, indexed by $\Omega$. We consider the (continuous) map $\varphi_{(\alpha_{\lambda})_{\lambda \in \Omega}} : C^{2}\times C^{1} \rightarrow L^{2}$, given by
\begin{eqnarray}\label{algebrarealization}
  \varphi_{(\alpha_{\lambda})_{\lambda \in \Omega}}(\mu, A) & = & \sum_{\lambda=(i,j,k) \in \Omega} \alpha_{\lambda} A^{i}\mu(A^{j}-, A^{k}-).
\end{eqnarray}

Here, $L^{2}$ is the vector space of bilinear maps from $\mathbb{C}^{3}\times \mathbb{C}^{3}$ to $\mathbb{C}^{3}$.
Note that for certain constants, we can restrict the codomain of $\varphi_{(\alpha_{\lambda})_{\lambda \in \Omega}}$ to $C^{2}$.
Let $\psi_{\alpha,\beta}$, $\phi_{\beta}$, $\rho$ be functions from $C^{2}\times C^{1}$ to $C^{2}$ defined by

\begin{eqnarray}
\nonumber  \psi_{\alpha,\beta}(\mu,A) & := &  \mu(\cdot,\cdot) + \alpha A \mu(\cdot,\cdot) + \beta\mu(A\cdot,\cdot) + \beta\mu(\cdot,A\cdot)\\
\label{invf}  \phi_{\beta}(\mu,A) & := &   A \mu(\cdot,\cdot) + \beta\mu(A\cdot,\cdot) + \beta\mu(\cdot,A\cdot)\\
\nonumber  \rho(\mu,A) & : = & \mu(A\cdot,\cdot) + \mu(\cdot,A\cdot).
\end{eqnarray}

Note that if $ (\mu,A) \xrightarrow{\text{\,\,deg\,\,}} (\lambda,B)$ then
$\varphi_{(\alpha_{\lambda})_{\lambda \in \Omega}} (\mu,A) \xrightarrow{\text{\,\,deg\,\,}}   \varphi_{(\alpha_{\lambda})_{\lambda \in \Omega}}(\lambda,B)$,
analogously for $\phi_{\beta}$ and $\rho$. If $(\mathbb{C}^{3},\mu)$ is an almost Abelian Lie algebra and $A$ leaves an ideal codimension $1$ of $(\mathbb{C}^{3},\mu)$ invariant,
then $\psi_{\alpha,\beta}$, $\phi_{\beta}$ and $\rho$ sends to $(\mu,A)$ to an almost Abelian Lie algebra structure. The purpose of this
part is to illustrate how evaluate the functions in (\ref{invf}) at each hom-Lie algebra given in section \ref{homLie3}.
The next example show how we obtained the results of the Tables \ref{tab:table2a}, \ref{tab:table2b}, \ref{tab:table3} and \ref{tab:table4}.

\begin{example}
We consider the hom-Lie algebras $\mathfrak{L}_{5}^{6}(z,\lambda)$. As we mentioned above, we have $\varphi_{(\alpha_{\lambda})_{\lambda \in \Omega}}(\mathfrak{L}_{5}^{9}(z,\lambda))$ is an Almost abelian Lie algebra. We want to determine the isomorphism class of
$\psi_{\alpha,\beta}(\mathfrak{L}_{5}^{9}(z,\lambda))$:
$$
\psi_{\alpha,\beta}(\mathfrak{L}_{5}^{9}(z,\lambda)) =
\left\{
\begin{array}{l}
{[{e_1},{e_2}]} = \left( 1+\alpha\,\lambda+ \beta\,\lambda \right) {e_2} -\lambda \left( \alpha+\beta z \right) {e_3},\\
{[{e_1},{e_3}]} =\lambda \left( \beta +\alpha z \right) {e_2}+ z\left( 1-\alpha\lambda-\beta\, \lambda \right) {e_3}.
\end{array}
\right.
$$
From Proposition \ref{almost3Lie}, we just have to study $\operatorname{Det}(B)$, $\operatorname{Tr}(B)$ and the discriminant
of $B$ where $B$ is
\begin{eqnarray*}
  B & = & \left[
            \begin{array}{cc}
               1+\alpha\,\lambda+ \beta\,\lambda & \lambda(\beta +\alpha z)  \\
               -\lambda( \alpha + \beta z)  &  z(1-\alpha \lambda - \beta\, \lambda)  \\
            \end{array}
          \right]
\end{eqnarray*}
Note that $\psi_{\alpha,\beta}(\mathfrak{L}_{5}^{9}(z,\lambda))$ is never the abelian Lie algebra, and so,
$\psi_{\alpha,\beta}(\mathfrak{L}_{5}^{9}(z,\lambda))$ is isomorphic to $\mathfrak{n_{3}(\mathbb{C})}$
if and only if $\operatorname{Det}(B) = 0 $ and $\operatorname{Tr}(B) = 0$; i.e.
\begin{eqnarray*}
\alpha \beta &=& -{\frac {z}{ \left( z-1 \right) ^{2}{\lambda}^{2}}}\\
\alpha+\beta  &=& {\frac {1+z}{ \left( z-1 \right) \lambda}}.
\end{eqnarray*}
or equivalently,
\begin{eqnarray*}
\alpha &=& {\frac {z+1 \pm \,\sqrt {{z}^{2}+6\,z+1}}{ 2 \left( z-1 \right) \lambda}}\\
\beta  &=& {\frac {z+1 \mp \,\sqrt {{z}^{2}+6\,z+1}}{ 2\left( z-1 \right) \lambda}}.
\end{eqnarray*}
 Similarly $B$ is never equal to a multiple of the identity matrix, therefore $\psi_{\alpha,\beta}(\mathfrak{L}_{5}^{9}(z,\lambda))$
 is isomorphic to $r_{3}(\mathbb{C})$ if and only if $\operatorname{Trace}(B)\neq 0$ and $\operatorname{Trace}(B)^2-4\operatorname{Det}(B)=0$, equivalently
$$
\left\{
\begin{array}{rcl}
( \left( 1-z \right)  \left( \alpha+\beta \right) \lambda+1+z)s-1 & = & 0 \\
\left( z-1 \right)  \left( \alpha-\beta \right) ^{2}{\lambda}^{2}-2\,\left( 1+z \right)  \left( \alpha+\beta \right) \lambda+(z-1) & = & 0
\end{array}
\right.
$$
for some $s \in \mathbb{C}^{\star}$; which is the same as
\begin{eqnarray*}
\alpha &= &  {\frac {s+sz-1 \pm \sqrt {s \left( s-2+6\,sz-2\,z+{ z}^{2}s \right) }}{2s\lambda\, \left( z-1 \right) }},\\
\beta &=&    {\frac {s+sz-1 \mp \sqrt {s \left( s-2+6\,sz-2\,z+{z}^{2}s \right) }}{2s \lambda\, \left( z-1 \right) }}.
\end{eqnarray*}

In other case, $\psi_{\alpha,\beta}(\mathfrak{L}_{5}^{9}(z,\lambda))$ is isomorphic to $r_{2}(\mathbb{C}) \times \mathbb{C}$ if and only if
$\operatorname{Trace}(B) s - 1 =0$, for some $s\in \mathbb{C}^{\star}$ and $\operatorname{Det}(B)=0$; this is

\begin{eqnarray*}
\alpha &= &  {\frac {sz+s-1 \pm \sqrt {{s}^{2}{z}^{2}+6\,{s}^{2}z+{s}^{2}-2\,sz-2\,s+1}}{2\lambda\,s \left( z-1 \right) }}, \\
\beta &=&    {\frac {sz+s-1 \mp \sqrt {{s}^{2}{z}^{2}+6\,{s}^{2}z+{s}^{2}-2\,sz-2\,s+1}}{2\lambda\,s \left( z-1 \right) }},
\end{eqnarray*}

and similarly, $\psi_{\alpha,\beta}(\mathfrak{L}_{5}^{9}(z,\lambda))$ is isomorphic to $\mathfrak{r}_{3,-1}$ if and only if $\operatorname{Det}(B) s - 1=0$, for some $s\in \mathbb{C}^{\star}$ and $\operatorname{Trace}(B) = 0$:
\begin{eqnarray*}
\alpha &=& {\frac {sz+s \pm \sqrt {s \left( s{z}^{2}+6\,sz+s+1 \right) }}{ 2\left( z-1 \right) \lambda\,s}}, \\
\beta  &=& {\frac {sz+s \mp \sqrt {s \left( s{z}^{2}+6\,sz+s+1 \right) }}{ 2\left( z-1 \right)\lambda\,s}}.
\end{eqnarray*}

In the remaining case, we have $\psi_{\alpha,\beta}(\mathfrak{L}_{5}^{9}(z,\lambda))$ is isomorphic to $\mathfrak{r}_{3,z}$ for some $z \in \mathbb{C}$, with $z(z^2-1)\neq0$,
if and only if $\operatorname{Trace}(B)s_{1} - 1 = 0$, $\operatorname{Det}(B)s_{2} - 1 = 0$ and $(\operatorname{Trace}(B)^2-4\operatorname{Det}(B))s_{3}-1 = 0$,
for some $s_1$, $s_2$ and $s_3$ in $\mathbb{C}^{\star}$. By solving the three equations for $\alpha$ and $\beta$ we have

\begin{eqnarray*}
\alpha &=& {\frac {s_{{2}} ( z s_{{1}}+s_{{1}}  - 1 ) \pm \sqrt {f(s_1,s_2)}}{2s_{{1}}s_{{2}} \left( z-1 \right) \lambda }}, \\
\beta&=& {\frac {s_{{2}} ( z s_{{1}}+s_{{1}}  - 1 ) \mp \sqrt {f(s_1,s_2)}}{2s_{{1}}s_{{2}} \left( z-1 \right) \lambda }}, \\
f(s_1,s_2) & = & \left( 1+ \left( {z}^{2}+1+6\,z \right) {s_{{1}}}^{2}-2\, \left( z+1 \right) s_{{1}} \right) {s_{{2}}}^{2}-4\,s_{{2}}{s_{{1}}}^{2}
\end{eqnarray*}
with $4 s_1^2 - s_2 \neq 0$.
\end{example}

In the case of the hom-Lie algebras $\mathfrak{L}_{7}^{i}$, a straightforward verification shows that
$\psi_{\alpha,\beta}(\mathfrak{L}_{7}^{i})$ is a Lie algebra with nondegenerate Killing form, and so
we have such Lie algebra is isomorphic to $\mathfrak{so}(3,\mathbb{C})$. With respect to $\mathfrak{L}_{6}^{12}$
and $\mathfrak{L}_{6}^{13}(\lambda)$, the function $\psi_{\alpha,\beta}$ does not necessarily send them to a Lie algebra.

\subsubsection{Others derivations}

Given a finite subset of $\mathbb{Z}_{\geq0}^{3}$, say $\Omega$, a family of complex constants $\{\alpha_{I}, \beta_{I}, \gamma_{I}: I=(i_1,i_2,i_3) \in \Omega\}$ and a hom-Lie algebra $(\mathbb{C}^{3},\mu,A)$, we can obtain a new algebra as in the equation (\ref{algebrarealization}). It is evident that the (dimension of) \textit{extended derivations} of
this new algebra is an invariant of $(\mathbb{C}^{3},\mu,A)$; we mean by such derivations to the kernel of $T_{(\mu,A)} : (C^1)^{|\Omega|} \times (C^1)^{|\Omega|} \times (C^1)^{|\Omega|} \rightarrow L^{2}$
defined by $T_{(\mu,A)}( D_{I},\ldots,F_{I},\ldots,G_{I} )$ equal to
\begin{eqnarray*}
&\sum_{I \in \Omega} \alpha_{I} D_{I} A^{i} \mu(A^{j} - , A^{k} - ) + \beta_{I}  A^{i} \mu(F_{I} A^{j} - , A^{k} - ) + \gamma_{I} A^{i} \mu(A^{j} - , G_{I} A^{k} - ).&
\end{eqnarray*}
In addition, note that we can impose extra conditions of linear dependence of the matrices $D_I$, $F_I$ and $G_I$ and commuting relations or others relations with the matrices $A^{k}$.

It follows easily by the upper semi-continuity of the \textit{nullity function} on linear operators
that the dimension of such subspace of derivations of a hom-Lie algebra $(\mathbb{C}^{n},\mu,A)$ is less or equal that in the orbit closure of $(\mu,A)$.

For our purposes, it suffices to consider the following subspaces of extended derivations: let $(\mathbb{C}^{3},\mu,A)$ be
a hom-Lie algebra and let $t$ be a complex constant
\begin{equation}
\left\{
\begin{array}{l}
  D_1\mu(\cdot,\cdot)+\mu(D_2\cdot,\cdot)+\mu(\cdot,D_3\cdot) = 0\\
  D_1 + tD_3 =0\\
  D_1A-AD_1=0,   D_2A-AD_2=0,   D_3A-AD_3=0.
\end{array}
\right.
\end{equation}
We denote by $\wedn{der}_{t}^{1}(\mathbb{C}^{3},\mu,A)$ the dimension of such subspace. Similarly, $\wedn{der}^{2}(\mathbb{C}^{3},\mu,A)$ stands for the dimension of the vector space
\begin{equation}\label{invf2}
\left\{
\begin{array}{l}
  D\mu(\cdot,\cdot) = 0\\
  DA-AD=0.
\end{array}
\right.
\end{equation}

\begin{example}
We consider the hom-Lie algebra $\mathfrak{L}_{5}^{5}(z)=(\mathfrak{r}_{3,z},A_5)$. If $D_1(w's)$, $D_2(x's)$  and $D_3(z's)$
are commuting matrices with $A_5$ and $D_1 = -tD_3$, then $\lambda(\cdot,\cdot) := D_1[\cdot,\cdot] + [D_2\cdot,\cdot] + [\cdot,D_3\cdot]$
is
$$
\begin{array}{l}
{e_1}\cdot{e_1}=\left( -x_{{3,1}} + y_{{3,1}}  \right)z {e_3},\,{e_1}\cdot{e_3}= \left( x_{{1,1}} -ty_{{3,3}} + y_{{3,3}}  \right)z {e_3},\,\\
{e_1}\cdot{e_2}=-ty_{{1,2}}{e_1}+ \left( x_{{1,1}}-ty_{{3,3}}+y_{{3,3}} \right) {e_2}+ \left( -ty_{{3,2}}+y_{{3,2}}z \right) {e_3},\,\\
{e_2}\cdot{e_1}=ty_{{1,2}}{e_1}+ \left( -y_{{1,1}}+ty_{{3,3}}-x_{{3,3}} \right) {e_2}+ \left( ty_{{3,2}}-x_{{3,2}}z \right) {e_3},\,\\
{e_2}\cdot{e_2}= \left( x_{{1,2}}-y_{{1,2}} \right) {e_2},\,
{e_2}\cdot{e_3}=x_{{1,2}}z{e_3},\,
{e_3}\cdot{e_2}=-y_{{1,2}}z{e_3},\,\\
{e_3}\cdot{e_1}= \left( -y_{{1,1}} + ty_{{3,3}} - x_{{3,3}}  \right)z {e_3}.
\end{array}
$$
\end{example}
Since $z\neq0$, $x_{1,2}=0$, $y_{1,2}=0$, $x_{3,1}=y_{3,1}$, $x_{3,2}=\tfrac{t}{z}y_{3,2}$, $x_{1,1}=y_{3,3}(t-1)$, $x_{3,3}=ty_{3,3}-y_{1,1}$ and $(t-z)y_{3,2}=0$.
It follows that if $t=z$ then $\wedn{der}^{1}_{z}=4$, and $\wedn{der}^{1}_{t}=3$ in other case.

\begin{table}[!!h]
\begin{center}
\begin{tabular}{| m{1.5cm} |  m{1.25cm} |  m{0.25cm} |  m{1.5cm} |  m{1.5cm} |  m{0.25cm} |}
\hline
hom-Lie & \multicolumn{2}{ c| }{$\wedn{der}^{1}_{t}$} & hom-Lie & \multicolumn{2}{ c| }{$\wedn{der}^{1}_{t}$}\\
\hline
\multirow{2}{*}{$\mathfrak{L}_{5}^{5}(z)$} & $t =z$ & 4 & \multirow{2}{*}{$\mathfrak{L}_{5}^{4}(z)$} & $t =1/z$ & 4 \\
 & $t\neq z$ & 3 & & $t\neq 1/z$ & 3 \\
\hline
\multirow{3}{*}{$\mathfrak{L}_{5}^{2}(z)$} & $t =1$ & 4 & \multirow{3}{*}{$\mathfrak{L}_{5}^{1}(z)$} & $t = 1$        & 4 \\
                                           & $t =z$ & 4 &                                            & $t = 1/z$      & 4 \\
                                           & $t \neq 1,z$ & 3 &                                      & $t \neq 1,1/z$ & 3 \\
\hline
\multirow{1}{*}{$\mathfrak{L}_{5}^{ 3}(z)$} & any $t$   & 3 &  \multicolumn{3}{c}{}   \\

\cline{1-3}
\end{tabular}
\caption{ The $\wedn{der}^{1}_{t}$-function on $\mathfrak{L}_{5}^{i}(z)$, $i=1,...,5$}
\label{tab:tableder}
\end{center}
\end{table}

\subsection{Degenerations in the family $\mathfrak{L}_{j}^{i}$ with $j$ fixed }\label{famfixed}

Before proceeding further with the degenerations of all hom-Lie algebras obtained before, we focus our attention for
a while on the special case of degenerations of ones with isomorphic underlying Lie algebra. Given $(\mathbb{C}^{3},\mu,A)$
and $(\mathbb{C}^{3},\mu,B)$ two hom-Lie algebras with $(\mathbb{C}^{3},\mu)$ a Lie algebra, our strategy to show that
$(\mathbb{C}^{3},\mu,A)$ degenerates in $(\mathbb{C}^{n},\mu,B)$ is to verify that $B$ is in the Euclidean closure
of the $\operatorname{Aut}(\mathbb{C}^{3},\mu)$-orbit of $A$. We recall that this type of problems are well-known
in the literature and go back at least as far as the contributions by Wim Hesselink and Murray Gerstenhaber to the
study the adjoint action of an algebraic group on the nilpotent elements of its Lie algebra (see the nice review of the problem given by
Joyce O'Halloran in \cite{OHalloran}).

\begin{theorem}[{\cite[Proposition 1.6]{Gerstenhaber2}, \cite[Theorem 3.10]{Hesselink}, \cite[Section 2]{OHalloran}}]
Let $A$ and $B$ be $m \times m$ complex nilpotent matrices and let $\operatorname{GL}(m,\mathbb{C})\bullet A$
denote the orbit of $A$ given by its similarity class. $B$ is in the (Zariski) closure of $\operatorname{GL}(m,\mathbb{C})\bullet A$ if and only if $\operatorname{rank}A^{k} \geq \operatorname{rank} B^{k}$, $k=1,\ldots,m-1$.
\end{theorem}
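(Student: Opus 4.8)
The plan is to pass to Jordan normal form and translate everything into the combinatorics of partitions. First I would write $A$ and $B$ in Jordan form; since both are nilpotent, their similarity classes are determined by partitions $\lambda = (\lambda_1 \geq \lambda_2 \geq \cdots)$ and $\mu = (\mu_1 \geq \mu_2 \geq \cdots)$ of $m$ recording the sizes of the Jordan blocks. For a single nilpotent block $J_n$ of size $n$ one has $\operatorname{rank}(J_n^{k}) = \max(n-k,0)$, so for a direct sum $\operatorname{rank}(A^{k}) = \sum_i \max(\lambda_i - k,0) = m - \sum_{j=1}^{k} \lambda_j^{*}$, where $\lambda^{*}$ is the conjugate partition (indeed $\dim\ker(A^{k}) = \sum_i \min(\lambda_i,k) = \sum_{j=1}^{k}\lambda_j^{*}$). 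Consequently the inequalities $\operatorname{rank}(A^{k}) \geq \operatorname{rank}(B^{k})$ for all $k$ are equivalent to $\sum_{j\leq k}\lambda_j^{*} \leq \sum_{j\leq k}\mu_j^{*}$ for all $k$, i.e. to $\lambda^{*} \trianglelefteq \mu^{*}$ in the dominance order; and since conjugation of partitions reverses dominance, this is the same as $\lambda \trianglerighteq \mu$. Thus I would reduce the theorem to the assertion that $O_B \subseteq \overline{O_A}$ if and only if $\lambda \trianglerighteq \mu$.

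The forward direction is the soft one. For fixed $k$ and $r$, the set $\{X \in M(m,\mathbb{C}) : \operatorname{rank}(X^{k}) \geq r\}$ is Zariski open, being defined by the non-vanishing of some $r\times r$ minor of $X^{k}$, which is polynomial in the entries of $X$; hence $X \mapsto \operatorname{rank}(X^{k})$ is lower semicontinuous. Since $B$ lies in the closure of $O_A$, this forces $\operatorname{rank}(B^{k}) \leq \operatorname{rank}(A^{k})$ for every $k$, which by the first paragraph is exactly $\lambda \trianglerighteq \mu$.

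The reverse direction is where the real work lies. Here I would use that the closure relation is transitive and that the dominance order is generated by its covering relations, so that it suffices to realize every \emph{minimal} degeneration, i.e. every covering in which $\lambda$ covers $\mu$, as an actual orbit degeneration. A covering relation in the dominance lattice is an elementary box move affecting only two (or three consecutive) parts of the partition, so I would keep the Jordan blocks that do not change fixed and construct an explicit one-parameter family $A(t)$, supported on the few blocks that do change, with $A(t)$ similar to the larger-orbit representative for $t \neq 0$ and $A(0)$ similar to the smaller one. Concretely, for two blocks of sizes $p \geq q+2$ one writes down a pencil degenerating $J_p \oplus J_q$ to $J_{p-1}\oplus J_{q+1}$ and checks the ranks of the powers at $t=0$ against those for $t \neq 0$. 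Composing these minimal degenerations along a saturated chain from $\lambda$ down to $\mu$ then gives $O_B \subseteq \overline{O_A}$.

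The main obstacle is precisely this last step: identifying the covering relations of the dominance order correctly, which is a purely combinatorial but delicate point where one must rule out intermediate partitions, and then producing for each elementary move an explicit family whose block structure jumps in exactly the prescribed way at $t=0$. Once the rank bookkeeping of these pencils is verified, the equivalence follows by transitivity of degeneration together with the dictionary between rank sequences and dominance established in the first paragraph.
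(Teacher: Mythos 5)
Your proposal is correct in outline, but it should be said up front that the paper itself contains no proof of this statement: the theorem is imported verbatim from the cited works of Gerstenhaber, Hesselink and O'Halloran, and the authors only remark that in the one case they actually use ($m=3$, where the nilpotent similarity classes are just the partitions $(1,1,1)$, $(2,1)$ and $(3)$) the closure relations can be checked by hand. So your argument is not an alternative to the paper's proof so much as a reconstruction of the standard proof from the references, and as such it follows the classical route (O'Halloran's ``simple proof'' in particular proceeds exactly by realizing elementary degenerations through explicit one-parameter families). Your first paragraph (the dictionary $\operatorname{rank}A^{k}=m-\sum_{j\leq k}\lambda^{*}_{j}$, hence rank inequalities $\Leftrightarrow$ dominance $\lambda\trianglerighteq\mu$) and your semicontinuity argument for the forward implication are correct as stated. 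Two points would complete the hard direction, which you honestly flag but leave open. First, you do not actually need the covering relations of the dominance order (Brylawski's classification), so the ``delicate combinatorial point'' can be bypassed: it is an elementary induction that $\lambda\trianglerighteq\mu$ with $\lambda\neq\mu$ implies $\mu$ is reached from $\lambda$ by a chain of single-box moves $(\lambda_i,\lambda_j)\mapsto(\lambda_i-1,\lambda_j+1)$ with $i<j$ and $\lambda_i\geq\lambda_j+2$, each intermediate stage still a partition dominating $\mu$; transitivity of orbit closure then only requires realizing these moves, not the covers. Second, the pencil you postulate for the two affected blocks exists and can be written in one line: on a basis $e_1,\dots,e_{p-1},f_1,\dots,f_{q+1}$ set $X(t)e_i=e_{i-1}$, $X(t)f_j=f_{j-1}$ for $j\geq 2$, and $X(t)f_1=t\,e_{p-q-1}$ (note $p-q-1\geq 1$ since $p\geq q+2$). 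Then $X(0)=J_{p-1}\oplus J_{q+1}$, while for $t\neq 0$ a direct count of images of basis vectors gives $\operatorname{rank}X(t)^{k}=\max(p-k,0)+\max(q-k,0)$ for all $k$, so $X(t)$ is similar to $J_p\oplus J_q$; since the curve $t\mapsto X(t)$ is algebraic, $X(0)$ lies in the Zariski closure of the orbit. With these two items supplied your plan closes up into a complete proof, and it is in fact more self-contained than what the paper offers, at the cost of doing general-$m$ work that the paper deliberately avoids by restricting to $3\times 3$ matrices.
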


In our case, we are working with $3\times 3$ complex nilpotent matrices, and so the above theorem can be showed by
an easy verification.

Also, it is important mention the result about the classification of the degenerations of $3$-dimensional complex Lie algebras,
which plays an important role in our approach to obtain the analogous result in the family of hom-Lie algebras studied here.

\begin{proposition}[{\cite[Proposition 3]{B2}\cite[Section 4]{BB}}]\label{teorema}
All degenerations of the $3$-dimensional complex Lie algebras are:
\begin{enumerate}
\item $\mathbb{C}^{3} \xrightarrow{\text{\,\,deg\,\,}} \mathbb{C}^{3}$,
\item $\mathfrak{n}_{3}(\mathbb{C}) \xrightarrow{\text{\,\,deg\,\,}} \mathfrak{n}_{3}(\mathbb{C}), \mathbb{C}^{3} $,
\item $\mathfrak{r}_{3}(\mathbb{C}) \xrightarrow{\text{\,\,deg\,\,}} \mathfrak{r}_{3}(\mathbb{C}), \mathfrak{r}_{3,1}(\mathbb{C}), \mathfrak{n}_{3}(\mathbb{C}), \mathbb{C}^{3}$,
\item $\mathfrak{r}_{3,1}(\mathbb{C}) \xrightarrow{\text{\,\,deg\,\,}} \mathfrak{r}_{3,1}(\mathbb{C}), \mathbb{C}^{3}$,
\item $\mathfrak{r}_{3,-1}(\mathbb{C}) \xrightarrow{\text{\,\,deg\,\,}} \mathfrak{r}_{3,-1}(\mathbb{C}),  \mathfrak{n}_{3}(\mathbb{C}), \mathbb{C}^{3}$,
\item If $z\neq\pm1$, $\mathfrak{r}_{3,z}(\mathbb{C}) \xrightarrow{\text{\,\,deg\,\,}} \mathfrak{r}_{3,z}(\mathbb{C}) , \mathfrak{n}_{3}(\mathbb{C}), \mathbb{C}^{3}$,
\item $\mathfrak{r}_{2}\times \mathbb{C}  \xrightarrow{\text{\,\,deg\,\,}} \mathfrak{r}_{2}\times \mathbb{C}, \mathfrak{n}_{3}(\mathbb{C}), \mathbb{C}^{3}$,
\item $\mathfrak{sl}_{2}(\mathbb{C}) \xrightarrow{\text{\,\,deg\,\,}} \mathfrak{sl}_{2}(\mathbb{C}), \mathfrak{r}_{3,-1}(\mathbb{C}),\mathfrak{n}_{3}(\mathbb{C}), \mathbb{C}^{3}$.
\end{enumerate}

 The Hasse diagram is given by:

\begin{center}
\begin{tikzpicture}
  \node (sl2) at (-3,2) {$\mathfrak{sl}_{2}(\mathbb{C})$};
  \node (r-1) at (-3,1) {$\mathfrak{r}_{3,-1}(\mathbb{C})$};
  \node (rz) at (-1,1) {$\mathfrak{r}_{3,z}(\mathbb{C})$};
  \node (r2) at (1,1) {$\mathfrak{r}_{2}(\mathbb{C}) \times \mathbb{C}$};
  \node (r3) at (3,1) {$\mathfrak{r}_{3}(\mathbb{C})$};
  \node (hei) at (0,0) {$\mathfrak{n}_{3}(\mathbb{C})$};
  \node (r1) at (3,0) {$\mathfrak{r}_{3,1}(\mathbb{C})$};
  \node (ab) at (0,-1) {$\mathbb{C}^{3}$};
\draw [->] (sl2) edge (r-1) (r-1) edge (hei) (hei) edge (ab);
\draw [->] (rz) edge (hei) (r2) edge (hei);
\draw [->] (r3) edge (hei) (r3) edge (r1);
\draw [->] (r1) edge (ab);
\end{tikzpicture}
\end{center}
\end{proposition}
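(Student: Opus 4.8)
The plan is to prove the two halves of the statement separately: first that every arrow drawn in the Hasse diagram is a genuine degeneration, and then that there are no further ones. For the existence half I would exhibit, for each edge, an explicit one-parameter family $g_t\in\operatorname{GL}(3,\mathbb{C})$ and compute the Euclidean limit $\lim_{t\to0}g_t\bullet\mu$; by Proposition~\ref{EucZar} a Euclidean limit of a point of an orbit lies in the Zariski closure of that orbit, so this suffices. For the completeness half I would bound the relation from above using invariants that are monotone along degenerations: the orbit dimension $9-\dim\operatorname{Der}(\mathfrak{g})$, which must \emph{strictly} decrease under a proper degeneration by Proposition~\ref{orden} and Corollary~\ref{niceDer}; the integer $\dim[\mathfrak{g},\mathfrak{g}]$, which is lower semicontinuous and hence cannot increase; and two linear (so Zariski-closed) conditions on the structure constants, namely \emph{solvability} and \emph{unimodularity} ($\operatorname{Tr}(\operatorname{ad}_x)=0$ for all $x$), each of which is inherited by every algebra in an orbit closure. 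The orbit dimensions themselves are read off from the automorphism groups in Proposition~\ref{autoLie3}.

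The decisive simplification is that, by Proposition~\ref{almost3Lie}, every solvable three-dimensional algebra is almost Abelian, hence isomorphic to $\mathfrak{r}_A$ for a $2\times2$ matrix $A=\operatorname{ad}(v_0)|_{\mathfrak{h}}$, with $\mathfrak{r}_A\cong\mathfrak{r}_B$ exactly when $B=c\,hAh^{-1}$ for some $c\in\mathbb{C}^{\star}$ and $h\in\operatorname{GL}(2,\mathbb{C})$. I would establish that a degeneration $\mathfrak{r}_A\xrightarrow{\text{deg}}\mathfrak{r}_B$ between solvable algebras holds if and only if $B$ lies in the closure of $\{c\,hAh^{-1}:c\in\mathbb{C}^{\star},\ h\in\operatorname{GL}(2,\mathbb{C})\}$, thereby reducing the entire solvable part of the problem to the elementary geometry of $2\times2$ matrices under conjugation and scaling. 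Combining scaling with the nilpotent-orbit closure theorem quoted above, this closure is easy to describe: for $A=t\operatorname{Id}$ (the algebra $\mathfrak{r}_{3,1}$) it contains only the scalars and their limit $0$; for $A$ diagonalizable with eigenvalue ratio $z\neq1$ (the algebras $\mathfrak{r}_{3,z}$, including $z=0$, which gives $\mathfrak{r}_2\times\mathbb{C}$, and $z=-1$) it additionally contains a nonzero nilpotent, that is $\mathfrak{n}_3$; and for $A$ a nontrivial Jordan block (the algebra $\mathfrak{r}_3$) it contains both a scalar matrix and a nonzero nilpotent, giving $\mathfrak{r}_{3,1}$ and $\mathfrak{n}_3$. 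This yields items~(1)--(7) at once. Since $\mathfrak{r}_{3,z}$, $\mathfrak{r}_{3,-1}$ and $\mathfrak{r}_3$ all have orbit dimension $5$, Proposition~\ref{orden} forbids any proper degeneration among them; in particular distinct members of the family $\mathfrak{r}_{3,z}$ are pairwise incomparable, which is why they collapse to a single node.

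For the simple algebra $\mathfrak{sl}_2(\mathbb{C})$ the two closed conditions do the work in both directions. Solvability being closed shows that no other algebra degenerates to $\mathfrak{sl}_2(\mathbb{C})$, and unimodularity being closed shows that every degeneration of $\mathfrak{sl}_2(\mathbb{C})$ is unimodular. Among the solvable algebras with two-dimensional derived algebra only $\mathfrak{r}_{3,-1}$ is unimodular, because on $\mathfrak{r}_{3,z}$ one has $\operatorname{Tr}(\operatorname{ad}_{e_1})=1+z$, which vanishes precisely for $z=-1$ (it equals $2$ on $\mathfrak{r}_3$ and $\mathfrak{r}_{3,1}$ and $1$ on $\mathfrak{r}_2\times\mathbb{C}$). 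This isolates the single new edge $\mathfrak{sl}_2(\mathbb{C})\xrightarrow{\text{deg}}\mathfrak{r}_{3,-1}$, which I would realise by an explicit In\"on\"u--Wigner-type contraction; the remaining targets of item~(8), namely $\mathfrak{n}_3$ and $\mathbb{C}^3$, then follow by transitivity of the degeneration relation from the chain $\mathfrak{sl}_2(\mathbb{C})\to\mathfrak{r}_{3,-1}\to\mathfrak{n}_3\to\mathbb{C}^3$.

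I expect the main obstacle to be the rigorous justification of the reduction to $2\times2$ matrix closures used throughout the solvable part. One must check that the codimension-one Abelian ideal persists in every relevant limit, so that the limiting structure is again almost Abelian and governed by the closure of $\mathbb{C}^{\star}\cdot\operatorname{conj}(A)$, and one must treat the boundary values $z\in\{0,1,-1\}$ with care, since there the generic description of that closure degenerates and the special isomorphisms of $\mathfrak{r}_{3,-1}$ and $\mathfrak{r}_{3,1}$ (with their larger automorphism groups) intervene. The only genuinely non-mechanical construction is the contraction $\mathfrak{sl}_2(\mathbb{C})\to\mathfrak{r}_{3,-1}$; every other edge is either a scaling limit of a matrix or is excluded by one of the monotone invariants above.
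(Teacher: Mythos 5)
You should first be aware that the paper contains no proof of this proposition: it is quoted verbatim from \cite[Proposition 3]{B2} and \cite[Section 4]{BB}, so your proposal can only be measured against the strategy of those sources, which is indeed the two-pronged one you adopt (explicit one-parameter families for existence, semicontinuous invariants for completeness). Most of your plan is sound: the ``if'' direction of your reduction is immediate (conjugating $\mathfrak{r}_A$ by $\operatorname{diag}(s,h)$ yields $\mathfrak{r}_{s^{-1}hAh^{-1}}$), your three closure computations for scalar, diagonalizable and Jordan-block $A$ are correct, equality of the $5$-dimensional orbit dimensions correctly makes $\mathfrak{r}_{3}$, $\mathfrak{r}_{3,-1}$, $\mathfrak{r}_{2}\times\mathbb{C}$ and the $\mathfrak{r}_{3,z}$ pairwise incomparable, and the contraction $\mathfrak{sl}_{2}(\mathbb{C})\rightarrow\mathfrak{r}_{3,-1}(\mathbb{C})$ exists as you say (e.g.\ $g_t=\operatorname{diag}(1,t,t)$, $t\to\infty$), with the rest of item (8) following by transitivity.

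There is, however, one genuine gap, and it sits exactly where you flagged uncertainty. Run through all non-edges with your four declared invariants (orbit dimension, $\dim[\mathfrak{g},\mathfrak{g}]$, solvability, unimodularity): every one of them is excluded \emph{except} $\mathfrak{r}_{3,z}(\mathbb{C})\rightarrow\mathfrak{r}_{3,1}(\mathbb{C})$ for $z\neq 0,\pm1$. Both algebras are solvable and non-unimodular ($\operatorname{Trace}(\operatorname{ad}_{e_1})$ equals $1+z$ and $2$ respectively), both have $2$-dimensional derived algebra, and the orbit dimensions are $5>3$, so nothing on your list forbids this degeneration; its exclusion rests entirely on the ``only if'' direction of your matrix-closure equivalence, which you leave unproven. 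Moreover your proposed repair (``check that the codimension-one Abelian ideal persists in the limit'') mislocates the difficulty: since solvability is closed and every solvable $3$-dimensional algebra is almost Abelian by Proposition \ref{almost3Lie}, the limit is automatically some $\mathfrak{r}_B$; what is not automatic is that $B$ lies in the closure of $\mathbb{C}^{\star}\!\cdot\!\{hAh^{-1}\}$, because the degenerating family $g_t$ need not respect the block decomposition. To close the gap you need one further polynomial invariant, for instance: on $\mathfrak{r}_A$, writing $x=s_xe_1+v$, one has $\operatorname{tr}(\operatorname{ad}_x)=s_x\operatorname{Trace}(A)$ and the Killing form is $\kappa(x,y)=s_xs_y\operatorname{Trace}(A^2)$, so the tensor identity $\operatorname{Trace}(A^2)\,\operatorname{tr}(\operatorname{ad}_x)\operatorname{tr}(\operatorname{ad}_y)=\operatorname{Trace}(A)^2\,\kappa(x,y)$ holds in every basis, hence on the orbit and on its Zariski closure; evaluating it on a putative limit $\mathfrak{r}_{3,1}$ with $A=\operatorname{diag}(1,z)$ forces $4(1+z^2)=2(1+z)^2$, i.e.\ $(z-1)^2=0$. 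This relation between the Killing form and the squared trace form is precisely the kind of invariant used in \cite{B2}; with it (or an equivalent substitute proving your ``only if'' claim) your argument becomes complete.
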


To illustrate, we study the degenerations in the family of hom-Lie algebras $\mathfrak{L}_{6}^{i}$;
we can now proceed analogously to analyze the degenerations in the remaining families
(the invariant $\wedn{der}_{t}^{1}$ given in Table \ref{tab:tableder} can be used to study degenerations in the family $\mathfrak{L}_{5}^{i}(z)$).

We use the Table \ref{tab:table1} like a starting point for our analysis because Corollary \ref{niceDer}.
The table \ref{tab:tableexe} shows the information about the degenerations in the family $\mathfrak{L}_{6}^{i}$.
To be more precise, the checkmark  in the intersection of row $\mathfrak{L}_{6}^{i}$ with column $\mathfrak{L}_{6}^{j}$
denotes that there is a degeneration of the hom-Lie algebra $\mathfrak{L}_{6}^{i}$ in $\mathfrak{L}_{6}^{j}$. For example,
$\mathfrak{L}_{6}^{13}(\lambda)$ degenerates to $\mathfrak{L}_{6}^{13}(\kappa)$ if and only if they are isomorphic,
equivalently $\lambda=\kappa$; because of the invariant $\psi$. Other example is that $\mathfrak{L}_{6}^{9}(\kappa)$
is a degeneration of $\mathfrak{L}_{6}^{13}(\lambda)$ if and only if $\lambda=\kappa$. To show that
$ \mathfrak{L}_{6}^{13}(\lambda) \xrightarrow{\text{\,\,deg\,\,}} \mathfrak{L}_{6}^{9}(\lambda)$, we explicitly find
a family $g_t$ in $\operatorname{Aut}(\mathfrak{r}_{2}(\mathbb{C})\times \mathbb{C})$ such that $A_9 = \lim_{t \to \infty } g_t \bullet A_{13}$ (see example \ref{exdeg}). Conversely,
if $ \mathfrak{L}_{6}^{13}(\lambda) \xrightarrow{\text{\,\,deg\,\,}} \mathfrak{L}_{6}^{9}(\kappa)$ then
$\psi_{\alpha,\beta} ( \mathfrak{L}_{6}^{13}(\lambda) )  \xrightarrow{\text{\,\,deg\,\,}}  \psi_{\alpha,\beta} ( \mathfrak{L}_{6}^{9}(\kappa) )$,
and so, by letting $\alpha=0$ and $\beta=-1/\lambda$ we get $\mathfrak{n}_{3} \xrightarrow{\text{\,\,deg\,\,}} \psi_{\alpha,\beta} ( \mathfrak{L}_{6}^{9}(\kappa) ) $,
which follows from Proposition \ref{teorema} and Table \ref{tab:table2b} that $\psi_{0,-1/\lambda} ( \mathfrak{L}_{6}^{9}(\kappa) )$ is isomorphic to $\mathfrak{n}_{3}$, and consequently $\beta = -1/\kappa = -1/\lambda$.

The symbol $\operatorname{Der} + \mbox{invariant}$ in the intersection of row $\mathfrak{L}_{6}^{i}$ with column $\mathfrak{L}_{6}^{j}$ stands
that the dimensions of the algebras of derivations of $\mathfrak{L}_{6}^{i}$ and $\mathfrak{L}_{6}^{j}$ are equal but $\mathfrak{L}_{6}^{i}$ and $\mathfrak{L}_{6}^{j}$ are not isomorphic because of such invariant; hence there is no degeneration from $\mathfrak{L}_{6}^{i}$ to $\mathfrak{L}_{6}^{j}$.
Analogously, the symbol $\operatorname{Der}$ denotes that  $\mathfrak{L}_{6}^{i}$ cannot degenerate to $\mathfrak{L}_{6}^{j}$
since the dimension of algebra of derivations of $\mathfrak{L}_{6}^{i}$ is greater than to the dimension of algebra de derivations of $\mathfrak{L}_{6}^{j}$,
and the symbol $\psi$ represents that there is no degeneration from $\mathfrak{L}_{6}^{i}$ to $\mathfrak{L}_{6}^{j}$ because
it is easy give values of $\alpha$ and $\beta$ such that $\psi_{\alpha,\beta}(\mathfrak{L}_{6}^{i})$ does not degenerate to $\psi_{\alpha,\beta}(\mathfrak{L}_{6}^{j})$ (here it is important to be aware of Proposition \ref{teorema}; as we showed above).  Similarly $\phi$ and
$\rho$ are used. Where we have used a clover symbol, the invariants mentioned above do not give reasons why such a degeneration is impossible,
and so we give other reasons; which are explained below.

\begin{enumerate}
  \item [$\clubsuit$-1] $\mathfrak{L}_{6}^{5}$ and $\mathfrak{L}_{6}^{3}$ are not isomorphic: $\mathfrak{L}_{6}^{3}$ is a multiplicative hom-Lie algebra
  and $\mathfrak{L}_{6}^{5}$ is not.
  \item [$\clubsuit$-2] $\mathfrak{L}_{6}^{5}$ satisfies the identity $[A_{5}\cdot,\cdot] \equiv 0$ but $\mathfrak{L}_{6}^{1}$ does not.
  \item [$\clubsuit$-3] An easy computation  shows that $\wedn{der}^{2} (\mathfrak{L}_{6}^{4}) = 4$ and $\wedn{der}^{2} (\mathfrak{L}_{6}^{2}) = 3$;
  therefore $\mathfrak{L}_{6}^{4}$ does not degenerate to $\mathfrak{L}_{6}^{2}$ (see the definition of the invariant $\wedn{der}^{2}$ in equation (\ref{invf2})).
  \item [$\clubsuit$-4] $\mathfrak{L}_{6}^{2}$ and $\mathfrak{L}_{6}^{1}$ are not isomorphic: $\mathfrak{L}_{6}^{2}$ satisfies the  identity $[A_{2}\cdot,\cdot] \equiv 0$ but $\mathfrak{L}_{6}^{1}$ does not.
\end{enumerate}

\begin{example}\label{exdeg}
To show that $(\mathfrak{L}_{6}, A_{13}(\lambda))$  degenerates to $(\mathfrak{L}_{6}, A_{9}(\lambda))$, given that the underlying algebra
is the same, we will find a family $g_t$ in $\operatorname{GL}(3,\mathbb{C})$ such that $g_t \bullet \mathfrak{L}_{6} = \mathfrak{L}_{6}$;
i.e. $g_t \in \operatorname{Aut}(\mathfrak{L}_{6})$, and $g_t A_{13}(\lambda) g_t^{-1}$ tends to $A_9(\lambda)$ as $t$ tends to infinity.
Here, we take advantage of the Proposition \ref{autoLie3} in the following way. Let $g$ be an arbitrary automorphism of $\mathfrak{L}_{6}$, then we know
that $g$ has the following expression:
$$
g = \left(\begin{array}{ccc} 1 & 0 & 0 \\  x & a & 0 \\ y & 0 & b \\ \end{array}\right),\, a,b \in \mathbb{C}^{*}.
$$
Let $E:=gA_{13}(\lambda)g^{-1} - A_{9}(\lambda)$
$$
E = \left[ \begin {array}{ccc} -{\frac {x}{a}}&\frac{1}{a}&0
\\ \noalign{\medskip}-{\frac {{x}^{2}b+xba\lambda+\lambda\,y{a}^{2}+ab
}{ab}}&{\frac {x}{a}}&{\frac {\lambda\, \left( a-b \right) }{b}}
\\ \noalign{\medskip}{\frac {-xy+xb\lambda+\lambda\,ya}{a}}&{\frac {y-
b\lambda+a\lambda}{a}}&0\end {array} \right]
$$

The basic idea is to try that $E$  is as close as possible to being the zero map.
To this end, we can start letting some entries of $E$ to be zero; for instance,
by solving the equations ${E[2,1],E[3,1],E[3,2]}$ for $a$, $b$ and $y$. Here, we can introduce
a new variable $z$ such that $z^2 = 1+4x\lambda$ and so
\begin{eqnarray*}
  a & = & \frac{(z+1)^2(z-1)}{8\lambda^2}\\
  b & = & a \\
  y & = & \frac{(z+1)(z-1)}{4\lambda}
\end{eqnarray*}
and
$$
E = \left[ \begin {array}{ccc}
{\frac {-2\lambda}{z+1}}&\frac {8{\lambda}^{2}}{(z-1)(z+1)^2}&0\\
\noalign{\medskip}0&{\frac {2\lambda}{z+1}}&{\frac {2\lambda}{z-1}}\\
\noalign{\medskip}0&0&0\end {array} \right].
$$
If we let $z = 1+\exp(t)$, then we obtain a parametric family of automorphisms of $\mathfrak{L}_{6}$, $g_t$, and it
satisfies $g_t \bullet A_{13}(\lambda) \xrightarrow[t \to \infty]{} {A_{9}(\lambda)}$

\end{example}

The respective Hasse diagram of the family $\mathfrak{L}_{6}^{j}$, $j=0,...,13$ with respect to
the partial order defined by the degeneration relation is:

\begin{center}
\begin{tikzpicture}

\node[label=above:{$\mathfrak{L}_{6}^{13}(\lambda)$}](L613) at (-4,2) {};
\node(L613a) at (-5,2) {};
\node(L613b) at (-3,2) {};
\draw[thick] (L613a)edge(L613b);

\node(L69) at (-4,1) {};
\node[label=left:{$\mathfrak{L}_{6}^{9}(\lambda)$}](L69a) at (-5,1) {};
\node(L69b) at (-3,1) {};
\draw[thick] (L69a)edge(L69b);
\node[label=below:{$\mathfrak{L}_{6}^{6}(\lambda)$}](L66) at (-4,0) {};
\node(L66a) at (-5,0) {};
\node(L66b) at (-3,0) {};
\draw[thick] (L66a)edge(L66b);
\draw [->] (L69a)edge(L66a) (L69)edge(L66) (L69b) edge (L66b) (L613a)edge(L69a) (L613)edge(L69) (L613b) edge (L69b);


\node[circle,fill,inner sep=0pt,minimum size=3pt,label=above:{$\mathfrak{L}_6^{11}$}]    (L611) at (0,2) {};
\node[circle,fill,inner sep=0pt,minimum size=3pt,label=above:{$\mathfrak{L}_6^{12}$}]    (L612) at (2,2) {};
\node[circle,fill,inner sep=0pt,minimum size=3pt,label=above:{$\mathfrak{L}_6^{10}$}]    (L610) at (1,1) {};
\node[circle,fill,inner sep=0pt,minimum size=3pt,label=above:{$\mathfrak{L}_6^7$}]       (L67)  at (-1,1) {};
\node[circle,fill,inner sep=0pt,minimum size=3pt,label=above:{$\mathfrak{L}_6^8$}]       (L68)  at (3,1) {};
\node[circle,fill,inner sep=0pt,minimum size=3pt,label=left: {$\mathfrak{L}_6^5$}]       (L65)  at (-1,0) {};
\node[circle,fill,inner sep=0pt,minimum size=3pt,label=below:{$\mathfrak{L}_6^3$}]       (L63)  at (1,0) {};
\node[circle,fill,inner sep=0pt,minimum size=3pt,label=right:{$\mathfrak{L}_6^4$}]       (L64)  at (3,0) {};
\node[circle,fill,inner sep=0pt,minimum size=3pt,label=below:{$\mathfrak{L}_6^2$}]       (L62)  at (0,-1) {};
\node[circle,fill,inner sep=0pt,minimum size=3pt,label=below:{$\mathfrak{L}_6^1$}]       (L61)  at (2,-1) {};
\node[circle,fill,inner sep=0pt,minimum size=3pt,label=below:{$\mathfrak{L}_6^0$}]       (L60)  at (1,-2) {};

\draw [->] (L612) edge (L68) (L612) edge (L610) (L612) edge (L67);
\draw [->] (L611) edge (L67) (L611) edge (L610);

\draw [->] (L610) edge (L65) (L610) edge (L63);

\draw [->] (L68) edge (L64) (L68) edge (L63);
\draw [->] (L67) edge (L63) (L67) edge (L65);

\draw [->] (L65) edge (L62);
\draw [->] (L64) edge (L61);
\draw [->] (L63) edge (L62) (L63) edge (L61);

\draw [->] (L62) edge (L60);
\draw [->] (L61) edge (L60);

\end{tikzpicture}
\end{center}

\begin{landscape}
\begin{table}[p!]
\begin{center}
\begin{tabular}{|c||c|c|c||c|c|c|c||c|c|c|c||c|c||c|}
  \hline
$\xrightarrow{\text{\,\,deg\,\,}}$
& $\mathfrak{L}_{6}^{13}(\kappa)$ & $\mathfrak{L}_{6}^{12}$ & $\mathfrak{L}_{6}^{11}$ & $\mathfrak{L}_{6}^{10}$ & $\mathfrak{L}_{6}^{9}(\kappa)$
& $\mathfrak{L}_{6}^{8}$ & $\mathfrak{L}_{6}^{7}$ & $\mathfrak{L}_{6}^{6}(\kappa)$ & $\mathfrak{L}_{6}^{5}$ & $\mathfrak{L}_{6}^{4}$ & $\mathfrak{L}_{6}^{3}$
& $\mathfrak{L}_{6}^{2}$ & $\mathfrak{L}_{6}^{1}$ & $\mathfrak{L}_{6}^{0}$ \\
\hline
\hline
$\mathfrak{L}_{6}^{13}(\lambda)$ & $\lambda\overset{\checkmark}{=}\kappa$, $\psi$ & $\operatorname{Der} + \rho$ & $\operatorname{Der} + \rho$ & $\psi$ & $\lambda\overset{\checkmark}{=}\kappa$, $\psi$ & $\psi$ & $\psi$ & $\lambda=\kappa$, $\psi$ & $\psi$ & $\psi$ & $\psi$ & $\psi$ &$\psi$ & $\psi$ \\
$\mathfrak{L}_{6}^{12}$ & $\operatorname{Der} + \rho$ & $\checkmark$ & $\operatorname{Der} + \rho$ & $\checkmark$ & $\rho$ & $\checkmark$ & $\checkmark$ & $\rho$ & $\checkmark$ & $\checkmark$ & $\checkmark$ & $\checkmark$ & $\checkmark$ & $\checkmark$ \\
$\mathfrak{L}_{6}^{11}$ & $\operatorname{Der} + \rho$ & $\operatorname{Der} +\rho$ & $\checkmark$ & $\checkmark$ & $\rho$ & $\rho$ & $\checkmark$ & $\rho$ & $\checkmark$ & $\rho$ & $\checkmark$ & $\checkmark$ & $\checkmark$ & $\checkmark$ \\
\hline
$\mathfrak{L}_{6}^{10}$ & $\operatorname{Der}$ & $\operatorname{Der}$ & $\operatorname{Der}$ & $\checkmark$ & $\operatorname{Der} + \rho$ & $\operatorname{Der} +\rho$ & $\operatorname{Der} + \phi$ & $\rho$ & $\checkmark$ & $\rho$ &  $\checkmark$ & $\checkmark$ & $\checkmark$ & \checkmark \\
$\mathfrak{L}_{6}^{ 9}(\lambda)$ & $\operatorname{Der}$ & $\operatorname{Der}$ & $\operatorname{Der}$ & $\operatorname{Der} + \rho$ & $\lambda\overset{\checkmark}{=}\kappa$, $\psi$ & $\operatorname{Der} + \rho$ & $\operatorname{Der} + \rho$ & $\lambda\overset{\checkmark}{=}\kappa$, $\psi$ & $\psi$ & $\psi$ & $\psi$ & $\psi$ & $\psi$ & $\psi$ \\
$\mathfrak{L}_{6}^{ 8}$ & $\operatorname{Der}$ & $\operatorname{Der}$ & $\operatorname{Der}$ & $\operatorname{Der} + \rho$ & $\operatorname{Der} + \rho$ & $\checkmark$ & $\operatorname{Der} + \rho$ & $\rho$ & $\rho$ & $\checkmark$ &  $\checkmark$ & $\checkmark$ & $\checkmark$ & $\checkmark$ \\
$\mathfrak{L}_{6}^{ 7}$ & $\operatorname{Der}$ & $\operatorname{Der}$ & $\operatorname{Der}$ & $\operatorname{Der} + \phi$ & $\operatorname{Der} + \rho$ & $\operatorname{Der} + \rho$ & $\checkmark$ & $\rho$ & $\checkmark$ & $\rho$ & $\checkmark$ & $\checkmark$ & $\checkmark$ & $\checkmark$ \\
\hline
$\mathfrak{L}_{6}^{ 6}(\lambda)$ & $\operatorname{Der}$ & $\operatorname{Der}$ & $\operatorname{Der}$ & $\operatorname{Der}$ & $\operatorname{Der}$ & $\operatorname{Der}$ & $\operatorname{Der}$ & $\lambda\overset{\checkmark}{=}\kappa$, $\psi$ & $\operatorname{Der} + \rho$ & $\operatorname{Der} + \rho$ &  $\operatorname{Der} + \rho$ & $\psi$ & $\psi$ & $\psi$ \\
$\mathfrak{L}_{6}^{ 5}$ & $\operatorname{Der}$ & $\operatorname{Der}$ & $\operatorname{Der}$ & $\operatorname{Der}$ &$\operatorname{Der}$ & $\operatorname{Der}$ &$\operatorname{Der}$& $\operatorname{Der} + \rho$& $\checkmark$ &$\operatorname{Der} + \rho$ & $\clubsuit$-1 & $\checkmark$ & $\clubsuit$-2 & $\checkmark$ \\
$\mathfrak{L}_{6}^{ 4}$ & $\operatorname{Der}$ & $\operatorname{Der}$&$\operatorname{Der}$ & $\operatorname{Der}$ &$\operatorname{Der}$ & $\operatorname{Der}$ &$\operatorname{Der}$ & $\operatorname{Der} + \rho$  & $\operatorname{Der} + \rho$   &  $\checkmark$  &  $\operatorname{Der} + \rho$ & $\clubsuit$-3 & $\checkmark$ & $\checkmark$ \\
$\mathfrak{L}_{6}^{ 3}$ & $\operatorname{Der}$& $\operatorname{Der}$ & $\operatorname{Der}$ & $\operatorname{Der}$&$\operatorname{Der}$& $\operatorname{Der}$ &$\operatorname{Der}$ & $\operatorname{Der} + \rho$ & $\clubsuit$-1 & $\operatorname{Der} + \rho$  &$\checkmark$ & $\checkmark$ &$\checkmark$& $\checkmark$ \\
\hline
$\mathfrak{L}_{6}^{ 2}$ & $\operatorname{Der}$ & $\operatorname{Der}$ &$\operatorname{Der}$& $\operatorname{Der}$ &$\operatorname{Der}$ & $\operatorname{Der}$ &$\operatorname{Der}$ &$\operatorname{Der}$ &$\operatorname{Der}$ & $\operatorname{Der}$& $\operatorname{Der}$& $\checkmark$ & $\clubsuit$-4 & $\checkmark$\\
$\mathfrak{L}_{6}^{ 1}$ & $\operatorname{Der}$ & $\operatorname{Der}$ &$\operatorname{Der}$& $\operatorname{Der}$ &$\operatorname{Der}$ & $\operatorname{Der}$ &$\operatorname{Der}$ &$\operatorname{Der}$ &$\operatorname{Der}$ & $\operatorname{Der}$& $\operatorname{Der}$ & $\clubsuit$-4 &$\checkmark$ & $\checkmark$\\
\hline
$\mathfrak{L}_{6}^{ 0}$ & $\operatorname{Der}$ & $\operatorname{Der}$ &$\operatorname{Der}$& $\operatorname{Der}$ &$\operatorname{Der}$ & $\operatorname{Der}$ &$\operatorname{Der}$ &$\operatorname{Der}$ &$\operatorname{Der}$ & $\operatorname{Der}$& $\operatorname{Der}$ & $\operatorname{Der}$ &$\operatorname{Der}$ & $\checkmark$\\
\hline
\end{tabular}
\caption{Degenerations in the family $\mathfrak{L}_{6}^{i}$}
\label{tab:tableexe}
\end{center}
\end{table}
\end{landscape}

By using similar arguments we obtain the Hasse diagrams for the closure ordering in the remaining families:

\begin{multicols}{3}

\begin{enumerate}

\item[0.] Degenerations in the family $\mathfrak{L}_{0}^{i}$\newline
\centering
\begin{tikzpicture}
  \node[circle,fill,inner sep=0pt,minimum size=3pt,label=right:{$\mathfrak{L}_0^2$}] (L02) at (0,1) {};
  \node[circle,fill,inner sep=0pt,minimum size=3pt,label=left:{$\mathfrak{L}_0^1$}] (L01) at (0,0) {};
  \node[circle,fill,inner sep=0pt,minimum size=3pt,label=right:{$\mathfrak{L}_0^0$}] (L00) at (0,-1) {};
  \node(empty) at (0,-5) {};
\draw [->] (L02) edge (L01) (L01) edge (L00);
\end{tikzpicture}

\item[1.] Degenerations in the family $\mathfrak{L}_{1}^{i}$\newline
\centering
\begin{tikzpicture}
  \node[circle,fill,inner sep=0pt,minimum size=3pt,label=above:{$\mathfrak{L}_1^4$}](L14) at (+0,2) {};
  \node[circle,fill,inner sep=0pt,minimum size=3pt,label=below:{$\mathfrak{L}_1^6$}](L16) at (+0,1) {};
  \node[circle,fill,inner sep=0pt,minimum size=3pt,label=below:{$\mathfrak{L}_1^3$}](L13) at (-1,0) {};
  \node[circle,fill,inner sep=0pt,minimum size=3pt,label=below:{$\mathfrak{L}_1^5$}](L15) at (+1,0) {};
  \node[circle,fill,inner sep=0pt,minimum size=3pt,label=above:{$\mathfrak{L}_1^2$}](L12) at (0,-1) {};
  \node[circle,fill,inner sep=0pt,minimum size=3pt,label=right:{$\mathfrak{L}_1^1$}](L11) at (0,-2) {};
  \node[circle,fill,inner sep=0pt,minimum size=3pt,label=below:{$\mathfrak{L}_1^0$}](L10) at (0,-3) {};

\draw [->] (L14) edge (L16) (L16) edge (L13) (L16) edge (L15) (L13) edge (L12) (L15) edge (L12) (L12) edge (L11) (L11) edge (L10);
\end{tikzpicture}

\item[3.] Degenerations in the family $\mathfrak{L}_{3}^{i}$
\centering
\begin{tikzpicture}
  \node[circle,fill,inner sep=0pt,minimum size=3pt,label=above:{$\mathfrak{L}_3^3$}] (L33) at (0,1) {};
  \node[circle,fill,inner sep=0pt,minimum size=3pt,label=left:{$\mathfrak{L}_3^2$}] (L32) at (0,0) {};
  \node[circle,fill,inner sep=0pt,minimum size=3pt,label=right:{$\mathfrak{L}_3^1$}] (L31) at (0,-1) {};
  \node[circle,fill,inner sep=0pt,minimum size=3pt,label=below:{$\mathfrak{L}_3^0$}] (L30) at (0,-2) {};
\draw [->] (L33) edge (L32) (L32) edge (L31) (L31) edge (L30);
\end{tikzpicture}

\end{enumerate}
\end{multicols}

\begin{enumerate}

\item[2.] Degenerations in the family $\mathfrak{L}_{2}^{i}$
\centering
\begin{tikzpicture}
\node[label=above:{$\mathfrak{L}_{2}^{6}$}](L26) at (-4,1) {};
\node(L26a) at (-5,1) {};
\node(L26b) at (-3,1) {};
\node[label=below:{$\mathfrak{L}_{2}^{4}$}](L24) at (-4,0) {};
\node(L24a) at (-5,0) {};
\node(L24b) at (-3,0) {};
\draw[thick] (L26a)edge(L26b);
\draw[thick] (L24a)edge(L24b);
\draw [->] (L26) edge (L24) (L26a)edge(L24a) (L26b) edge (L24b);

\node[label=above:{$\mathfrak{L}_{2}^{5}$}](L25) at (-1,0) {};
\node(L25a) at (-2,0) {};
\node(L25b) at (0,0) {};
\node[label=below:{$\mathfrak{L}_{2}^{3}$}](L23) at (-1,-1) {};
\node(L23a) at (-2,-1) {};
\node(L23b) at (+0,-1) {};
\draw[thick] (L25a)edge(L25b);
\draw[thick] (L23a)edge(L23b);
\draw [->] (L25) edge (L23) (L25a)edge(L23a) (L25b) edge (L23b);


\node[circle,fill,inner sep=0pt,minimum size=3pt,label=above:{$\mathfrak{L}_2^2$}] (L22) at (1,0) {};
\node[circle,fill,inner sep=0pt,minimum size=3pt,label=right:{$\mathfrak{L}_2^1$}]  (L21) at (1,-1) {};
\node[circle,fill,inner sep=0pt,minimum size=3pt,label=below:{$\mathfrak{L}_2^0$}] (L20) at (1,-2) {};
\draw [->] (L22) edge (L21) (L21) edge (L20);
\end{tikzpicture}

\item[4.] Degenerations in the family $\mathfrak{L}_{4}^{i}$

{%
\centering
\begin{tikzpicture}
\node[label=above:{$\mathfrak{L}_{4}^{6}(\lambda)$}](L46) at (-4,1) {};
\node(L46a) at (-5,1) {};
\node(L46b) at (-3,1) {};
\node[label=below:{$\mathfrak{L}_{4}^{4}(\lambda)$}](L44) at (-4,0) {};
\node(L44a) at (-5,0) {};
\node(L44b) at (-3,0) {};

\draw[thick] (L46a)edge(L46b);
\draw[thick] (L44a)edge(L44b);
\draw [->] (L46a)edge(L44a) (L46)edge(L44) (L46b) edge (L44b);


\node[circle,fill,inner sep=0pt,minimum size=3pt,label=above:{$\mathfrak{L}_4^5$}] (L45) at (-1,1) {};
\node[circle,fill,inner sep=0pt,minimum size=3pt,label=above:{$\mathfrak{L}_4^3$}] (L43) at (-2,0) {};
\node[circle,fill,inner sep=0pt,minimum size=3pt,label=above:{$\mathfrak{L}_4^2$}] (L42) at (0,0) {};
\node[circle,fill,inner sep=0pt,minimum size=3pt,label=above:{$\mathfrak{L}_4^1$}] (L41) at (-1,-1) {};
\node[circle,fill,inner sep=0pt,minimum size=3pt,label=below:{$\mathfrak{L}_4^0$}] (L40) at (-1,-2) {};
\draw [->] (L45) edge (L43) (L45) edge (L42) (L43) edge (L41) (L42) edge (L41) (L41) edge (L40);
\end{tikzpicture}
}

\item[5.] Degenerations in the family $\mathfrak{L}_{5}^{i}(z)$
\centering
\begin{tikzpicture}
\node[label=above:{$\mathfrak{L}_{5}^{9}(\lambda)$}](L59) at (-3,1) {};
\node(L59a) at (-4,1) {};
\node(L59b) at (-2,1) {};
\node[label=below:{$\mathfrak{L}_{5}^{6}(\lambda)$}](L56) at (-3,0) {};
\node(L56a) at (-4,0) {};
\node(L56b) at (-2,0) {};

\draw[thick] (L59a)edge(L59b);
\draw[thick] (L56a)edge(L56b);
\draw [->] (L59a)edge(L56a) (L59)edge(L56) (L59b) edge (L56b);


\node[circle,fill,inner sep=0pt,minimum size=3pt,label=above:{$\mathfrak{L}_5^7$}] (L57) at (0,1) {};
\node[circle,fill,inner sep=0pt,minimum size=3pt,label=above:{$\mathfrak{L}_5^8$}] (L58) at (2,1) {};
\node[circle,fill,inner sep=0pt,minimum size=3pt,label=above:{$\mathfrak{L}_5^5$}] (L55) at (-1,0) {};
\node[circle,fill,inner sep=0pt,minimum size=3pt,label=above:{$\mathfrak{L}_5^3$}] (L53) at (1,0) {};
\node[circle,fill,inner sep=0pt,minimum size=3pt,label=above:{$\mathfrak{L}_5^4$}] (L54) at (3,0) {};
\node[circle,fill,inner sep=0pt,minimum size=3pt,label=below:{$\mathfrak{L}_5^2$}] (L52) at (0,-1) {};
\node[circle,fill,inner sep=0pt,minimum size=3pt,label=below:{$\mathfrak{L}_5^1$}] (L51) at (2,-1) {};
\node[circle,fill,inner sep=0pt,minimum size=3pt,label=below:{$\mathfrak{L}_5^0$}] (L50) at (1,-2) {};

\draw [->] (L57) edge (L55) (L57) edge (L53) (L58) edge (L53) (L58) edge (L54);
\draw [->] (L55) edge (L52) (L54) edge (L51) (L53) edge (L51) (L53) edge (L52) (L52) edge (L50) (L51) edge (L50);

\end{tikzpicture}

\end{enumerate}

\subsection{Degenerations $3$-dimensional hom-Lie algebras structures with nilpotent twisting map on $3$-dimensional Lie algebras}

The remainder of this paper will be devoted to explain how to obtain the classification
of the orbits closure for the hom-Lie algebras obtained above. Again,
we use, as much as possible, Theorem \ref{teorema} and Proposition \ref{EucZar}.
In fact, if a hom-Lie algebra $(\mathbb{C}^{n},\mu,A)$ degenerates to $(\mathbb{C}^{n},\lambda,B)$,
then the algebra $(\mathbb{C}^{n},\mu)$ degenerates to $(\mathbb{C}^{n},\lambda)$ (because of Proposition \ref{EucZar}).
In a similar way as in subsection \ref{famfixed}, to show that $(\mathbb{C}^{n},\mu,A)$ degenerates to $(\mathbb{C}^{n},\lambda,A)$, we can try
to find a family $g_t$ in $\operatorname{GL}(3,\mathbb{C})_{A}$, the stabilizer of $A$ under conjugation, such that
$g_t\bullet \mu$ tends to $\lambda$ as $t$ tends to infinity.  Based on a case-by-case analysis, all the degenerations
in such situation can be obtained by executing such idea.

\begin{example}\label{exdeg2}
We will show that $ \mathfrak{L}_{6}^{9}(\lambda)  = ( \mathfrak{r}_{2}(\mathbb{C}) \times \mathbb{C} , A(\lambda))$  degenerates to $\mathfrak{L}_{1}^{5}=(\mathfrak{n}_{3}(C),B)$. Since the twisting maps of each such hom-Lie algebras are conjugate, we will find a one-parameter family of transformations in the left coset $g \operatorname{GL}(3,\mathbb{C})_{A(\lambda)}$ where $g A(\lambda) g^{-1} = B$, to realize the degeneration
of $\mathfrak{L}_{6} = \mathfrak{r}_{2}(\mathbb{C})\times \mathbb{C}$ in $\mathfrak{L}_{1}=\mathfrak{h}_{3}(\mathbb{C})$.

Note that any such $g$ has the form
$$
 g :=  \left[ \begin {array}{ccc} a&0&0\\ \noalign{\medskip}x&a&a
\\ \noalign{\medskip}y&x&{\frac {\lambda\,x - a}{\lambda}}\end {array}
 \right],\, a\in \mathbb{C}^{*}
$$

Now, by the change of basis given by $g$ we have the Lie algebra $\mathfrak{r}_{2}(\mathbb{C}) \times \mathbb{C}$ in this new basis
has the form
\begin{eqnarray}\label{changelie}
{[e_1,e_2]}  = {\frac { \left( a-\lambda\,x \right) }{{a }^{2}}}{ e_2}+{\frac {x \left( a-\lambda\,x \right) }{{a}^{3}}}{e_3},\,
{[e_1,e_3]}  = {\frac {\lambda}{a}}{e_2}+{\frac {x\lambda}{{a}^{2}}}{e_3}.
\end{eqnarray}

We want to try that the above Lie algebra is as close as possible to being the Heisenberg Lie algebra.
To this end, we can start letting the structure constant $e_3^{*}([e_1,e_2]) = 1$ by solving the equation
${\frac {x \left( a-\lambda\,x \right) }{{a}^{3}}}=1$ for $x$. Here, we can introduce a new variable $z$ such that $z^2 = 1 - 4a\lambda$ and so
$x = {\frac { \left( -1+{z}^{2} \right)  \left( -1\pm z \right) }{8{\lambda}^{2}}}$ and (\ref{changelie}) has the form
\begin{eqnarray*}
{[{e_1},{e_2}]}={\frac {2\lambda}{1-z}}{e_2}+{\it e_3},\, {[{e_1},{e_3}]}={\frac {4{\lambda}^{2}}{1-{z}^{2}}}{e_2}+{\frac {2\lambda}{z+1}}{e_3}.
\end{eqnarray*}

If we let $z = 1+\exp(t)$, then we obtain a parametric family of transformations, $g_t$,
such that $g_t A(\lambda) g_t^{-1} = B$ and $g_t \bullet { \mathfrak{r}_{2}(\mathbb{C}) \times \mathbb{C}} \xrightarrow[t \to \infty]{} {\mathfrak{n}_{3}(\mathbb{C})}$.

\end{example}

To obtain the classification of orbits closure in our family of hom-Lie algebras,
we proceed in the same way as in the examples of Subsection \ref{famfixed}; we
use the invariants $\operatorname{Der}$, $\psi_{\alpha,\beta}$, $\psi_{\beta}$
and $\rho$ to determine impossible degenerations and the ideas illustrated in
Examples \ref{exdeg} and \ref{exdeg2}. Here, we must mention there are five impossible degenerations
that they cannot be explained by the above-mentioned invariants; we need to give an different argument.

\begin{proposition}[$\clubsuit$-5]
The hom-Lie algebras  $\mathfrak{L}_{4}^{3}$, $\mathfrak{L}_{5}^{4}$, $\mathfrak{L}_{5}^{5}$, $\mathfrak{L}_{6}^{4}$ and
$\mathfrak{L}_{6}^{5}$ cannot degenerate to $\mathfrak{L}_{1}^{2}$
\end{proposition}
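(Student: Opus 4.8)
The plan is to exhibit a single Zariski-closed, $\operatorname{GL}(3,\mathbb{C})$-invariant subset $\mathcal{J}\subseteq W=C^{2}\times C^{1}$ that contains all five source hom-Lie algebras but omits $\mathfrak{L}_{1}^{2}$; closedness and invariance then force $\overline{O(\mu,A)}^{\operatorname{Z}}\subseteq\mathcal{J}$ for each source, and since $\mathfrak{L}_{1}^{2}\notin\mathcal{J}$, none of them can degenerate to it. The reason the earlier tools are blind to this is structural: in all five cases the twisting map is a rank-one square-zero operator, exactly as $A_{2}$ is, and the underlying Lie degenerations $\mathfrak{r}_{3,-1},\mathfrak{r}_{3,z},\mathfrak{r}_{2}\times\mathbb{C}\xrightarrow{\text{\,\,deg\,\,}}\mathfrak{n}_{3}$ are all permitted by Proposition \ref{teorema}. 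Hence $\operatorname{Der}$, as well as the realization algebras $\psi_{\alpha,\beta},\phi_{\beta},\rho$, all behave compatibly with a hypothetical degeneration and cannot obstruct it.

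The separating invariant I would use is the condition that $\operatorname{Im}(A)$ be an ideal of the bracket. Concretely, set
\[
\mathcal{J}:=\left\{(\mu,A)\in W:\ \mu(x,Ay)\wedge Az=0\ \text{ for all }x,y,z\in\mathbb{C}^{3}\right\}.
\]
First I would check that $\mathcal{J}$ is the common zero locus of finitely many polynomials in the entries of $(\mu,A)$ (linear in $\mu$, quadratic in $A$), so it is Zariski closed, and that it is $\operatorname{GL}(3,\mathbb{C})$-invariant: under $g$ the defining trilinear expression transforms by $\Lambda^{2}g$, which is injective, so its vanishing is preserved. When $A$ has rank one with $\operatorname{Im}(A)=\langle v\rangle$, membership in $\mathcal{J}$ is equivalent to $\mu(x,v)\in\langle v\rangle$ for all $x$, that is, to the line $\operatorname{Im}(A)$ being an ideal of $(\mathbb{C}^{3},\mu)$.

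Next I would verify the two containments. For each source one reads $\operatorname{Im}(A)$ off the matrices in the relevant propositions: it is the coordinate line $\langle e_{2}\rangle$ for $\mathfrak{L}_{4}^{3},\mathfrak{L}_{5}^{4},\mathfrak{L}_{6}^{4}$ and $\langle e_{3}\rangle$ for $\mathfrak{L}_{5}^{5},\mathfrak{L}_{6}^{5}$. In every case this line is an eigendirection of $\operatorname{ad}(e_{1})$ killed by the remaining brackets (e.g. $[e_{1},e_{2}]=e_{2}$, $[e_{2},e_{2}]=[e_{3},e_{2}]=0$), hence an ideal of the underlying solvable algebra; thus all five lie in $\mathcal{J}$. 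For the target $\operatorname{Im}(A_{2})=\langle e_{1}\rangle$, but in $\mathfrak{n}_{3}$ one has $[e_{2},e_{1}]=-e_{3}\notin\langle e_{1}\rangle$, so $\langle e_{1}\rangle$ is not an ideal; taking $x=y=z=e_{2}$ gives $\mu(e_{2},A_{2}e_{2})\wedge A_{2}e_{2}=(-e_{3})\wedge e_{1}\neq0$, whence $\mathfrak{L}_{1}^{2}\notin\mathcal{J}$. The conclusion is then immediate from the closure inclusion.

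The main obstacle is conceptual rather than computational. The tempting invariant ``$\operatorname{Im}(A)\subseteq[\mathfrak{g},\mathfrak{g}]$'', which also separates four of the five from the target, is \emph{not} closed: along $\mathfrak{r}_{3,z}\xrightarrow{\text{\,\,deg\,\,}}\mathfrak{n}_{3}$ (relevant for $\mathfrak{L}_{4}^{3},\mathfrak{L}_{5}^{4},\mathfrak{L}_{5}^{5}$) the dimension of $[\mathfrak{g},\mathfrak{g}]$ drops from $2$ to $1$, and a line sitting inside the derived algebra can slip out in the limit, so no contradiction arises. The key insight is to use the \emph{image-direction} condition $\mu(\mathbb{C}^{3},\operatorname{Im}A)\subseteq\operatorname{Im}A$ (``$\operatorname{Im}A$ is an ideal'') instead of the containment ``$\operatorname{Im}A$ lies in the derived algebra'': the former is genuinely cut out by the closed relations above and is insensitive to rank drops of $\mu$. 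The remaining care is bookkeeping, namely confirming that the single closed set $\mathcal{J}$ does the job uniformly across the three distinct underlying Lie algebras and that it is logically independent of the invariants $\operatorname{Der},\psi_{\alpha,\beta},\phi_{\beta},\rho$ used elsewhere.
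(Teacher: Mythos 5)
Your proposal is correct, and it proves the proposition by a genuinely different mechanism than the paper. The paper first pushes everything through the $\operatorname{GL}(3,\mathbb{C})$-equivariant continuous map $\varpi(\mu,A)=(\mu(A-,-),A)$, observing that $\mathfrak{L}_{4}^{3}$, $\mathfrak{L}_{5}^{4}$, $\mathfrak{L}_{6}^{4}$ all land on one pair $(\lambda_1,B_1)$ while $\mathfrak{L}_{1}^{2}$ lands on $(\lambda_0,B_0)$, and then separates the orbit closures by upper semicontinuity of the nullity of the operator $T_{(\lambda,B)}(X)=(X\lambda(-,-),\,XB-BX)$, whose kernel has dimension $4$ on the source side but $3$ at the target (with $\mathfrak{L}_{5}^{5}$ and $\mathfrak{L}_{6}^{5}$ handled analogously). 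You instead cut out a single Zariski-closed, $\operatorname{GL}(3,\mathbb{C})$-invariant set $\mathcal{J}$ by the polynomial identity $\mu(x,Ay)\wedge Az\equiv 0$, which for rank-one $A$ says exactly that the line $\operatorname{Im}A$ is an ideal of $(\mathbb{C}^{3},\mu)$. I checked the pieces: invariance follows from your $\Lambda^{2}g$ computation since $g\bullet$ transforms the trilinear form by an invertible map; the rank-one reduction $\mu(x,Ay)\wedge Az=\alpha(y)\alpha(z)\,\mu(x,v)\wedge v$ is right; $\operatorname{Im}A$ equals $\langle e_2\rangle$ or $\langle e_3\rangle$ in the five sources and is indeed an ideal of $\mathfrak{r}_{3,-1}$, $\mathfrak{r}_{3,z}$ and $\mathfrak{r}_{2}\times\mathbb{C}$ respectively, while the witness $\mu(e_2,A_2e_2)\wedge A_2e_2=(-e_3)\wedge e_1\neq 0$ correctly excludes $\mathfrak{L}_{1}^{2}$; and closedness plus invariance give $\overline{O(\mu,A)}^{\operatorname{Z}}\subseteq\mathcal{J}$, which finishes the argument. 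What your route buys: it is a one-step closed-identity obstruction in the spirit of the paper's $\clubsuit$-2 and $\clubsuit$-4 arguments (identities of type $[A\cdot,\cdot]\equiv 0$), sharpened by the wedge so that ``$\operatorname{Im}A$ is an ideal'' becomes polynomial and insensitive to the drop in $\operatorname{dim}[\mathfrak{g},\mathfrak{g}]$ along $\mathfrak{r}_{3,z}\xrightarrow{\text{\,\,deg\,\,}}\mathfrak{n}_{3}$ (your caveat about the non-closedness of $\operatorname{Im}A\subseteq[\mathfrak{g},\mathfrak{g}]$ is well taken); moreover it handles all five sources and the target uniformly, with no kernel computations. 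What the paper's route buys: the semicontinuity-of-nullity template is the same machinery as its extended-derivation invariants, so it slots into a systematic toolkit that applies even when no clean closed identity is visible; the price is computing $\operatorname{dim}\operatorname{Ker}T$ at two points and splitting the five sources into two batches.
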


\begin{proof}
We consider the continuous map $\varpi:C^{2}\times C^{1} \rightarrow L^{2}\times C^{1}$ given by
\begin{eqnarray*}
  (\mu,A) & \mapsto& (\lambda:=\mu(A-,-), A).
\end{eqnarray*}
The map $\varpi$ sends $\mathfrak{L}_{4}^{3}$, $\mathfrak{L}_{5}^{4}$ and $\mathfrak{L}_{6}^{4}$ to
the same pair $(\lambda_1,B_1)$
\begin{eqnarray*}
  \lambda_1 &:=& \left\{ e_3 \cdot e_1 = -e_2\right.\\
   B_1 &:=&\left(
        \begin{array}{ccc}
          0 & 0 & 0 \\
          0 & 0 & 1 \\
          0 & 0 & 0 \\
        \end{array}
      \right),
\end{eqnarray*}
while $\varpi$ sends $\mathfrak{L}_{1}^{2}$ to $(\lambda_0,B_0)$
\begin{eqnarray*}
\lambda_0 &:=& \left\{ e_2 \cdot e_2 = e_3\right.\\
   B_0 &:=&\left(
        \begin{array}{ccc}
          0 & 1 & 0 \\
          0 & 0 & 0 \\
          0 & 0 & 0 \\
        \end{array}
      \right).
\end{eqnarray*}
If $\mathfrak{L}_{4}^{3}$, $\mathfrak{L}_{5}^{4}$, $\mathfrak{L}_{5}^{5}$, $\mathfrak{L}_{6}^{4}$ degenerate to $\mathfrak{L}_{1}^{2}$,
then $(\lambda_0,B_0)$ is in the closure of $\operatorname{GL}(3,\mathbb{C})$-orbit of $(\lambda_1,B_1)$, which cannot happen. In fact,
we consider the ($\operatorname{GL}(3,\mathbb{C})$-equivariant) continuous map
\begin{eqnarray*}
  T: L^{2}\times C^{1} \rightarrow \operatorname{End}(C^{1} , L^{2} \times C^{1})
\end{eqnarray*}
where $T(\lambda,B) := T_{(\lambda,B)}$ is the linear map from $C^{1}$ to $L^{2} \times C^{1}$ given by
\begin{eqnarray*}
  T_{(\lambda,B)}(X) &=& (X\lambda(-,-) , XB-BX).
\end{eqnarray*}
As in Equation (\ref{invf2}), we consider the invariant $\operatorname{Ker}T_{(-,-)}$. We have $\operatorname{Ker} T_{(\lambda_1,B_1)}$ is equal to $4$ and so $\operatorname{Ker} T_{g\cdot(\lambda_1,B_1)} $ is $4$ for any $g \in  \operatorname{GL}(3,\mathbb{C})$ and on the other hand $\operatorname{Ker} T_{(\lambda_0,B_0)} $ is equal to $3$. It follows that
$(\lambda_1,B_1)$ does not degenerate to $(\lambda_0,B_0)$ since $T_{g\cdot(\lambda_1,B_1)}$ cannot be near $T_{(\lambda_0,B_0)}$,
for any $g \in  \operatorname{GL}(3,\mathbb{C})$ (by the upper semi-continuity of the \textit{nullity function} on linear operators).

We can now proceed analogously to the previous case to prove that $\mathfrak{L}_{5}^{5}$ and $\mathfrak{L}_{6}^{5}$
do not degenerate to $\mathfrak{L}_{1}^{2}$ by considering the same invariant functions.
\end{proof}

\section*{Others classifications}

We must mention that there have been some attempts to classify complex hom-Lie algebras in dimension $3$. We have found several redundancies and mistakes in such works.

In \cite[Theorem 4.3]{ORS}, by using symbolic calculations in  {Wolfram} \textit{Mathematica}\textsuperscript{\textregistered}, it is given five non-isomorphic families of Hom-Lie algebras to parameterize $3$-dimensional hom-Lie algebras with nilpotent twisting map. As the authors already mentioned, the provided families in \cite[Theorem 4.3]{ORS} present redundancies. For example, we could take the subfamily of $\mathcal{H}_{7}^{3}$ given by

\begin{eqnarray*}
\mu_{(a,b,c)} &:=&
\left\{
\begin{array}{l}
[{e_1},{e_2}] = -{a}^{2}c{e_2}+ \left( {a}^{4}+ab \right) {e_3},
[{e_1},{e_3}] =  a{e_1}+b{e_3},\\
{[{e_2},{e_3}]} =   {e_1}+c{e_2}-{a}^{2}{e_3}.
\end{array}
\right.
\end{eqnarray*}
It is easy to see that $(\mu_{(a_0,b_0,c_0)},\alpha_3)$ is isomorphic to $(\mu_{(a_1,b_1,c_1)},\alpha_3)$ if and only if
$(a_1,b_1,c_1) = (a_0,b_0,c_0)$ or $(a_1,b_1,c_1) = (c_0, b_0 + (a_0 - c_0)(a_0^2+c_0^2) ,a_0)$. The algebra
$(\mathbb{C}^{3},\mu_{(a,b,c)})$ is a Lie algebra if and only if $a=0$ and $b=0$, or $c=0$ and $b=-a^3$. In theses
cases the hom-Lie algebra $(\mathbb{C}^{3},\mu_{(0,0,c)}, \alpha_3)$ is isomorphic to $(\mathbb{C}^{3},\mu_{(c,-c^3,0)}, \alpha_3)$
and also, it is isomorphic to our hom-Lie algebra $\mathfrak{L}_{6}^{13}(\frac{1}{c})$. We recall the $\alpha_3$ is the linear map defined by
$\alpha_3 e_1 = e_2$, $\alpha_3 e_2 = e_3$ and $\alpha_3 e_3 = 0$.

With respect to \cite{G-DSS-V}, there are also visible mistakes and redundancies in the classification obtained there.
By way of example, in \cite[Proposition 4.3]{G-DSS-V}, it is easily seen that the given twisting maps are far from being \textit{canonical forms} for the different hom-Lie structures on $\mathfrak{sl}(2,\mathbb{C})$. For instance, for any $x\in \mathbb{C}^{\star}$,
the hom-Lie algebra $(\mathfrak{sl}(2,\mathbb{C}),A(a,x))$ with $A(a,x)$ equal to
\begin{eqnarray*}
  \left(
    \begin{array}{c|cc}
      a & 0 & 0 \\
      \hline
      0 & a & x^2 \\
      0 & 0 & a \\
    \end{array}
  \right)
\end{eqnarray*}
is isomorphic to $(\mathfrak{sl}(2,\mathbb{C}),A(a,1))$ via the $g \in \operatorname{Aut}(\mathfrak{sl}(2,\mathbb{C}))$
defined by $gH = H$, $gE = \tfrac{1}{x}E$ and $gF = xF$. Similarly, in the second familiy, the hom-Lie algebra $(\mathfrak{sl}(2,\mathbb{C}),B(a,x))$
is isomorphic to $(\mathfrak{sl}(2,\mathbb{C}),B(a,1))$ via the same $g \in \operatorname{Aut}(\mathfrak{sl}(2,\mathbb{C}) )$ given above, where
$B(a,x)$ is
\begin{eqnarray*}
  \left(
    \begin{array}{c|cc}
      a  & 0  & x \\
      \hline
      2x & a  & 0 \\
      0  & 0  & a \\
    \end{array}
  \right).
\end{eqnarray*}
In the third case, a true canonical form could be
\begin{eqnarray*}
  \left(
    \begin{array}{c|cc}
      a   & x  & 1 \\
      \hline
      2   & a  & 0 \\
      2x  & 0  & a \\
    \end{array}
  \right), \mbox{ with } a,x\in\mathbb{C}, x\neq0,
\end{eqnarray*}
and in the last one, a canonical form could be $C(a,x,y)$ equal to
\begin{eqnarray*}
  \left(
    \begin{array}{c|cc}
       a   & x  & y \\
       \hline
       2y  & a  & 1 \\
       2x  & 0  & a \\
    \end{array}
  \right), \mbox{ with } a,x,y\in\mathbb{C}, x y\neq0,
\end{eqnarray*}
and $x \in \mathbb{R}_{>0}$ or $\mathfrak{Im}(x)>0$.

\begin{landscape}
\begin{table}[p!]
\begin{center}
\begin{tabular}{c : c : c : c : c : c : c : c : c}
  $\operatorname{Dim}\operatorname{Der}$ & $\mathfrak{L}_{0}$ & $\mathfrak{L}_{1}$ & $\mathfrak{L}_{2}$ & $\mathfrak{L}_{3}$ &
  $\mathfrak{L}_{4}$ & $\mathfrak{L}_{5}(z)$ & $\mathfrak{L}_{6}$ & $\mathfrak{L}_{7}$ \\
  \hdashline
  \hdashline
   0 &  & $\mathfrak{L}_{1}^{4}$ &  &  &  &  & $\mathfrak{L}_{6}^{13}(\lambda)$ , $\mathfrak{L}_{6}^{12}$, $\mathfrak{L}_{6}^{11}$ & $\mathfrak{L}_{7}^{2}$ \\
  \hdashline
   1 &  & $\mathfrak{L}_{1}^{6}$  & $\mathfrak{L}_{2}^{6}(\lambda)$  &  & $\mathfrak{L}_{4}^{6}(\lambda)$, $\mathfrak{L}_{4}^{5}$ & $\mathfrak{L}_{5}^{9}(z,\lambda)$,$\mathfrak{L}_{5}^{7}(z)$,$\mathfrak{L}_{5}^{8}(z)$ &
   $\mathfrak{L}_{6}^{9}(\lambda)$, $\mathfrak{L}_{6}^{10}$, $\mathfrak{L}_{6}^{8}$, $\mathfrak{L}_{6}^{7}$ & $\mathfrak{L}_{7}^{1}$ \\
  \hdashline
   2 &  & $\mathfrak{L}_{1}^{5}$, $\mathfrak{L}_{1}^{3}$   & $\mathfrak{L}_{2}^{4}(\lambda)$, $\mathfrak{L}_{2}^{5}(\lambda)$, $\mathfrak{L}_{2}^{2}$ &
           $\mathfrak{L}_{3}^{3}$ & $\mathfrak{L}_{4}^{4}(\lambda)$, $\mathfrak{L}_{4}^{3}$, $\mathfrak{L}_{4}^{2}$ &
           $\mathfrak{L}_{5}^{6}(z,\lambda)$, $\mathfrak{L}_{5}^{5}(z)$, $\mathfrak{L}_{5}^{4}(z)$, $\mathfrak{L}_{5}^{3}(z)$ &
            $\mathfrak{L}_{6}^{6}(\lambda)$, $\mathfrak{L}_{6}^{5}$, $\mathfrak{L}_{6}^{4}$, $\mathfrak{L}_{6}^{3}$ &  \\
  \hdashline
   3 & $\mathfrak{L}_{0}^{2}$ & $\mathfrak{L}_{1}^{2}$  & $\mathfrak{L}_{2}^{3}(\lambda)$,$\mathfrak{L}_{2}^{1}$ & $\mathfrak{L}_{3}^{2}$ &
       $\mathfrak{L}_{4}^{1}$  & $\mathfrak{L}_{5}^{2}(z)$, $\mathfrak{L}_{5}^{1}(z)$ &
       $\mathfrak{L}_{6}^{2}$, $\mathfrak{L}_{6}^{1}$ & $\mathfrak{L}_{7}^{0}$ \\
  \hdashline
   4 &  & $\mathfrak{L}_{1}^{1}$  & $\mathfrak{L}_{2}^{0}$ & $\mathfrak{L}_{3}^{1}$ & $\mathfrak{L}_{4}^{0}$ & $\mathfrak{L}_{5}^{0}(z)$ &
            $\mathfrak{L}_{6}^{0}$ &  \\
  \hdashline
   5 & $\mathfrak{L}_{0}^{1}$ &  &  &  &  &  &  &  \\
  \hdashline
   6 &  & $\mathfrak{L}_{1}^{0}$  &  & $\mathfrak{L}_{3}^{0}$ &  &  &  &  \\
  \hdashline
   9 & $\mathfrak{L}_{0}^{0}$ &  &  &  &  &  &  &  \\
  \end{tabular}
\caption{Dimension of the algebra of derivations of the hom-Lie algebras in Section \ref{homLie3}}
\label{tab:table1}
\end{center}
\end{table}
\end{landscape}

\begin{table}[h!]
\begin{center}
\begin{tabular}{| m{1cm} | m{6cm} | m{1cm} | }
\hline
                                       {} & \multicolumn{2}{ c| }{$\psi_{\alpha,\beta}$}  \\
\hline
\multirow{2}{*}{$
\begin{array}{l}
\mathfrak{L}_{1}^{0},\mathfrak{L}_{1}^{1},\\
\mathfrak{L}_{1}^{2},\mathfrak{L}_{1}^{5}
\end{array}$
}
& $\alpha$ & \multirow{2}{*}{$\mathfrak{n}_{3}$} \\

                                          & $\beta$  &                                                 \\
\hline
\multirow{10}{*}{$
\begin{array}{l}
\mathfrak{L}_{1}^{3},\mathfrak{L}_{1}^{4},\\
\mathfrak{L}_{1}^{6}
\end{array}$
}
                                          & $\alpha=\beta$ & \multirow{2}{*}{$\mathfrak{n}_{3}$} \\
                                          & $\alpha=0$     &                                                 \\
\cdashline{2-3}
                                          & ${\alpha=\beta}$ & \multirow{2}{*}{$\mathfrak{r}_{3} $} \\
                                          & ${\alpha\neq 0}$ &                                      \\
\cdashline{2-3}
                                          & ${\alpha\beta =0}$  & \multirow{2}{*}{$\mathfrak{r}_{2}\times\mathbb{C} $} \\
                                          & ${\alpha\neq\beta}$ & \\
\cdashline{2-3}
                                          & ${\alpha=-\beta}$ & \multirow{2}{*}{$ \mathfrak{r}_{3,-1}$} \\
                                          & ${\alpha\neq 0}$  & \\
\cdashline{2-3}
                                          & \multirow{2}{*}{{$\alpha \beta(\alpha^2-\beta^2)\neq 0$}} & \multirow{2}{*}{$\mathfrak{r}_{3,z} $} \\
                                          & ${}$ & \\
\hline

\multirow{2}{*}{$
\begin{array}{l}
\mathfrak{L}_{2}^{0},\mathfrak{L}_{2}^{1}\\
\mathfrak{L}_{2}^{2}
\end{array}$
}
& $\alpha$ & \multirow{2}{*}{$\mathfrak{r}_{3}$} \\
                                          & $\beta$  &                                                 \\
\hline
\multirow{2}{*}{$
\begin{array}{l}
\mathfrak{L}_{2}^{3}(\lambda),\\
\mathfrak{L}_{2}^{5}(\lambda)
\end{array}$
}
& $1 + \beta\lambda + \alpha\lambda \neq 0$  & \multirow{1}{*}{$\mathfrak{r}_{3}$} \\
\cdashline{2-3}
                                          & $1 + \beta\lambda + \alpha\lambda = 0$  &   \multirow{1}{*}{$\mathfrak{r}_{3,1}$}          \\
\hline
\multirow{11}{*}{$
\begin{array}{l}
\mathfrak{L}_{2}^{4}(\lambda),\\
\mathfrak{L}_{2}^{6}(\lambda)
\end{array}$
}
&$\alpha= { {(-1\mp\sqrt {2})}/{\lambda}}$   & \multirow{2}{*}{$\mathfrak{n}_3$} \\
&$\beta = { {(-1\pm\sqrt {2})}/{\lambda}}$   & \\
\cdashline{2-3}
&$\alpha=-{ {(s-1 \pm \sqrt {2\,{s}^{2}-2\,s})}/{s\lambda}}$   &  \multirow{2}{*}{$\mathfrak{r}_{3}$} \\
&$\beta= { {(-s+1 \pm \sqrt {2\,{s}^{2}-2\,s})}/{s\lambda}}$   & \\
\cdashline{2-3}
&$\alpha=-{ {(2\,s-1 \pm \sqrt {8\,{s}^{2}-4\,s+1})}/{2s\lambda}}$   &  \multirow{2}{*}{$\mathfrak{r}_{2}\times \mathbb{C}$} \\
&$ \beta= { {(-2\,s+1 \pm \sqrt {8\,{s}^{2}-4\,s+1})}/{2s\lambda}}$   & \\
\cdashline{2-3}
&$\alpha=-{{(2\,s \pm \sqrt {s \left( 8\,s+1 \right) })}/{2s\lambda}}$   &  \multirow{2}{*}{$\mathfrak{r}_{3,-1}$} \\
&$\beta=  {{(-2\,s \pm \sqrt {s \left( 8\,s+1 \right) })}/{2s\lambda}}$   & \\
\cdashline{2-3}
&$\alpha=-{\frac {2\,s_{{1}}s_{{2}}-s_{{1}}\pm\sqrt {s_{{1}}s_{{2}} \left( 8\,s_{{1}}s_{{2}}-4\,s_{{1}}+s_{{2}} \right) }}{2s_{{1}}s_{{2}}\lambda}}$   &  \multirow{2}{*}{$\mathfrak{r}_{3,z}$} \\
&$\beta={\frac {-2\,s_{{1}}s_{{2}}+s_{{1}}\pm\sqrt {s_{{1}}s_{{2}} \left( 8\,s_{{1}}s_{{2}}-4\,s_{{1}}+s_{{2}} \right) }}{2s_{{1}}s_{{2}}\lambda}}$   &  \\
&$s_{1}-s_{2}^2 \neq 0 $ & \\
\hline

\multirow{2}{*}{$
\begin{array}{l}
\mathfrak{L}_{3}^{0},\\
\mathfrak{L}_{3}^{1}
\end{array}$
}
& $\alpha$ & \multirow{2}{*}{$\mathfrak{r}_{3,1}$} \\
                                          & $\beta$  &                                                 \\
\hline
\multirow{2}{*}{$
\begin{array}{l}
\mathfrak{L}_{3}^{2},\\
\mathfrak{L}_{3}^{3}
\end{array}$
}
& $\alpha = -\beta$ & \multirow{1}{*}{$\mathfrak{r}_{3,1}$} \\
\cdashline{2-3}
& $\alpha \neq -\beta$  & \multirow{1}{*}{$\mathfrak{r}_{3}$}\\
\hline
\multirow{3}{*}{$
\begin{array}{l}
\mathfrak{L}_{4}^{0},\mathfrak{L}_{4}^{1},\\
\mathfrak{L}_{4}^{2},\mathfrak{L}_{4}^{3},\\
\mathfrak{L}_{4}^{5}
\end{array}$
}
& \multirow{3}{*}{$\alpha$, $\beta$} & \multirow{3}{*}{$\mathfrak{r}_{3,-1}$} \\
&                                    &                                       \\
&                                    &                                       \\
\hline
\multirow{11}{*}{$
\begin{array}{l}
\mathfrak{L}_{4}^{4}(\lambda),\\
\mathfrak{L}_{4}^{6}(\lambda)\\
\end{array}$
}
& ${\alpha = -\beta  }$                          & \multirow{2}{*}{$\mathfrak{n}_{3}$} \\
& ${\alpha = \pm \frac{\sqrt{-1}}{2 \lambda} }$  & \\
\cdashline{2-3}
& $\alpha={ {(1 \mp \sqrt{-1}s)}/{2s\lambda}},$ & \multirow{2}{*}{$\mathfrak{r}_{3}$} \\
& $\beta = { {(1 \pm \sqrt{-1}s)}/{2s\lambda}}$     & \\
\cdashline{2-3}
& $\alpha={ {(1 \mp \sqrt {1-4\,{s}^{2}})}/{4s\lambda}},$  & \multirow{2}{*}{$\mathfrak{r}_{2}\times \mathbb{C}$} \\
& $\beta=  { {( 1\pm\sqrt {1-4\,{s}^{2}})}/{4s\lambda}}$   & \\
\cdashline{2-3}
& $\alpha=\mp{ {(\sqrt {s-{s}^{2}})}/{2s\lambda}},$       & \multirow{2}{*}{$\mathfrak{r}_{3,-1}$} \\
& $\beta=\pm{ {(\sqrt {s-{s}^{2} })}/{2s\lambda}}$        & \\
\cdashline{2-3}
&  $\alpha={\frac {s_{{1}} \mp \sqrt {s_{{1}}{s_{{2}}}^{2} \left( 1-s_{{1}} \right) }}{2s_{{1}}s_{{2}}\lambda}}$ & \multirow{2}{*}{$\mathfrak{r}_{3,z}$} \\
&  $\beta={\frac {s_{{1}} \pm \sqrt {s_{{1}}{s_{{2}}}^{2} \left( 1-s_{{1}} \right) }}{2s_{{1}}s_{{2}}\lambda}}  $ & \\
&  $s_{1}-s_{2}^2 \neq 0$ & \\
\hline
\end{tabular}
\caption{$\psi_{\alpha,\beta}$-invariant of the hom-Lie algebras in Section \ref{homLie3}}
\label{tab:table2a}
\end{center}
\end{table}

\begin{table}[!h]
\begin{center}
\begin{tabular}{| m{2cm} | m{7cm} | m{1cm} | }
\hline
                                       {} & \multicolumn{2}{ c| }{$\psi_{\alpha,\beta}$}  \\
\hline
\multirow{4}{*}{$
\begin{array}{l}
\mathfrak{L}_{5}^{0}(z),\mathfrak{L}_{5}^{1}(z),\\
\mathfrak{L}_{5}^{2}(z),\mathfrak{L}_{5}^{3}(z),\\
\mathfrak{L}_{5}^{4}(z),\mathfrak{L}_{5}^{5}(z),\\
\mathfrak{L}_{5}^{7}(z),\mathfrak{L}_{5}^{8}(z)
\end{array}$
}
& \multirow{4}{*}{$\alpha$, $\beta$} & \multirow{4}{*}{$\mathfrak{r}_{3,z}$} \\
&                                    &                                       \\
&                                    &                                       \\
&                                    &                                       \\
\hline
\multirow{12}{*}
{$
\begin{array}{l}
\mathfrak{L}_{5}^{6}(z,\lambda),\\
\mathfrak{L}_{5}^{9}(z,\lambda)
\end{array}
$}
&$\alpha= { {(z+1 \mp \,\sqrt {{z}^{2}+6\,z+1})}/{ 2 \left( -1+z \right) \lambda}}$ & \multirow{2}{*}{$\mathfrak{n}_{3}$} \\
&$ \beta= { {(z+1 \pm \,\sqrt {{z}^{2}+6\,z+1})}/{ 2\left( -1+z \right) \lambda}}$                                    &                                       \\
\cdashline{2-3}
&$\alpha={\frac {sz+s-1 \pm \sqrt {s \left( s-2+6\,sz-2\,z+s{z}^{2} \right) }}{ 2\left( -1+z \right) \lambda\,s}}$ & \multirow{2}{*}{$\mathfrak{r}_{3}$} \\
&$ \beta={\frac {sz+s-1 \mp \sqrt {s \left( s-2+6\,sz-2\,z+s{z}^{2} \right) }}{ 2\left( -1+z \right) \lambda\,s}}$  &                                     \\
\cdashline{2-3}
&$\alpha={\frac {sz+s-1\pm\sqrt {{s}^{2}-2\,s+6\,z{s}^{2}+1-2\,sz+{z}^{2}{s}^{2}}}{2s\lambda\, \left( -1+z \right) }}$ & \multirow{2}{*}{$\mathfrak{r}_{2}\times \mathbb{C}$} \\
&$\beta={\frac {sz+s-1\mp\sqrt {{s}^{2}-2\,s+6\,z{s}^{2}+1-2\,sz+{z}^{2}{s}^{2}}}{2s\lambda\, \left( -1+z \right) }}$&                                       \\
\cdashline{2-3}
&$ \alpha={\frac {sz+s\pm\sqrt {s \left( {z}^{2}s+6\,sz+s-4 \right) }}{2s\lambda\, \left( -1+z \right) }}$& \multirow{2}{*}{$\mathfrak{r}_{3,-1}$} \\
&$\beta={\frac {sz+s\mp\sqrt {s \left( {z}^{2}s+6\,sz+s-4 \right) }}{2s\lambda\, \left( -1+z \right) }}$ &                                       \\
\cdashline{2-3}
&$\alpha={\frac {zs_{{1}}s_{{2}}-s_{{1}}+s_{{1}}s_{{2}}\pm\sqrt {f(s_1,s_2)}}
            {2s_{{1}}s_{{2}}\lambda\, \left( -1+z \right) }}$ & \multirow{4}{*}{$\mathfrak{r}_{3,\widetilde{z}}$} \\
&$\beta={\frac {zs_{{1}}s_{{2}}-s_{{1}}+s_{{1}}s_{{2}}\mp\sqrt {f(s_1,s_2)}}
            {2s_{{1}}s_{{2}}\lambda\, \left( -1+z \right) }}$  &                                       \\
& {\tiny $f(s_1,s_2)=s_{{1}}s_{{2}}( {z}^{2}s_{{1}}s_{{2}}-2zs_{{1}}+6zs_{{1}}s_{{2}}-2\,s_{{1}}+s_{{1}}s_{{2}}+s_{{2}})$}& \\
&$s_{{1}}-{s_{{2}}}^{2} \neq 0$&\\
\hline
\multirow{5}{*}{$
\begin{array}{l}
\mathfrak{L}_{6}^{0},\mathfrak{L}_{6}^{1},\\
\mathfrak{L}_{6}^{2},\mathfrak{L}_{6}^{3},\\
\mathfrak{L}_{6}^{4},\mathfrak{L}_{6}^{5},\\
\mathfrak{L}_{6}^{7},\mathfrak{L}_{6}^{8},\\
\mathfrak{L}_{6}^{10},\mathfrak{L}_{6}^{11}
\end{array}$
}
& \multirow{5}{*}{$\alpha$, $\beta$} & \multirow{5}{*}{$\mathfrak{r}_{2}\times \mathbb{C}$} \\
&                                    &                                       \\
&                                    &                                       \\
&                                    &                                       \\
&                                    &                                       \\
\hline
\multirow{11}{*}{$
\begin{array}{l}
\mathfrak{L}_{6}^{6}(\lambda)\\
\mathfrak{L}_{6}^{9}(\lambda)
\end{array}$
}
&$\alpha=-1/\lambda$, $\beta=0$ or & \multirow{2}{*}{$\mathfrak{n}_{3}$} \\
&$\alpha=0$, $\beta=-1/\lambda$.   &                                    \\
\cdashline{2-3}
&$\alpha={ {(-s+2 \mp \sqrt {{s}^{2}-4\,s})}/{2s\lambda}}$ & \multirow{2}{*}{$\mathfrak{r}_{3}$}    \\
&$ \beta={ {(-s+2 \pm \sqrt {{s}^{2}-4\,s})}/{2s\lambda}}$ &                                       \\
\cdashline{2-3}
&$\alpha=-{\frac {s-1}{s\lambda}},\beta=0$ or & \multirow{2}{*}{$\mathfrak{r}_{2}\times \mathbb{C}$}  \\
&$\alpha=0,\beta=-{\frac {s-1}{s\lambda}}$ &                                       \\
\cdashline{2-3}
&$\alpha={ {(-s\pm\sqrt {s \left( s-4 \right) })}/{2\lambda\,s}}$ & \multirow{2}{*}{$\mathfrak{r}_{3,-1}$}    \\
&$ \beta={ {(-s\mp\sqrt {s \left( s-4 \right) })}/{2\lambda\,s}}$  &                                       \\
\cdashline{2-3}
&$\alpha={\frac {-s_{{1}}s_{{2}}+s_{{1}}\pm\sqrt {s_{{1}}s_{{2}} \left( s_{{1}}s_{{2}}-2\,s_{{1}}+s_{{2}} \right) }}{2s_{{1}}s_{{2}}\lambda}}$& \multirow{2}{*}{$\mathfrak{r}_{3,z}$}    \\
&$\beta={\frac {-s_{{1}}s_{{2}}+s_{{1}}\mp\sqrt {s_{{1}}s_{{2}} \left( s_{{1}}s_{{2}}-2\,s_{{1}}+s_{{2}} \right) }}{2s_{{1}}s_{{2}}\lambda}}$ &                                       \\
&$s_{{1}}-{s_{{2}}}^{2} \neq 0$ &                                       \\
\hline
\multirow{2}{*}{$\mathfrak{L}_{6}^{12}$} & $\alpha\beta = 0$ & $\mathfrak{r}_{2}\times\mathbb{C}$ \\
\cdashline{2-3}
&$\alpha\beta \neq 0$& NoLie\\
\hline
\multirow{4}{*}{$\mathfrak{L}_{6}^{13}(\lambda)$} & $\alpha =0,\beta=-1/\lambda$&  $\mathfrak{n}_{3}$\\
\cdashline{2-3}
&$\alpha =0,\beta\neq-1/\lambda$ or &\multirow{2}{*}{$\mathfrak{r}_{2}\times\mathbb{C}$}\\
&$\beta=0$ &\\
\cdashline{2-3}
& $\alpha\beta \neq 0$& NoLie \\
\hline
\multirow{2}{*}{$
\begin{array}{l}
\mathfrak{L}_{7}^{0},\mathfrak{L}_{7}^{1},\\
\mathfrak{L}_{7}^{2}\\
\end{array}$
}
& \multirow{2}{*}{$\alpha$, $\beta$} & \multirow{2}{*}{$\mathfrak{so}(3,\mathbb{C})$} \\
&                                    &                                       \\
\hline
\end{tabular}
\caption{$\psi_{\alpha,\beta}$-invariant of the hom-Lie algebras in Section \ref{homLie3}}
\label{tab:table2b}
\end{center}
\end{table}

\begin{table}[!h]
\begin{center}
\begin{tabular}{| m{2.725cm} | m{3cm} | m{1cm} | }
\hline
                                       {} & \multicolumn{2}{ c| }{$\phi_{\beta}$}  \\
\hline
\multirow{8}{*}{$
\begin{array}{l}
\mathfrak{L}_{ 1}^{0 },\mathfrak{L}_{ 1}^{1 },\mathfrak{L}_{ 1}^{2 },\mathfrak{L}_{ 1}^{ 5},\\
\mathfrak{L}_{ 2}^{0 },\mathfrak{L}_{ 2}^{1 },\mathfrak{L}_{ 2}^{ 2},\\
\mathfrak{L}_{ 3}^{0 },\mathfrak{L}_{ 3}^{1 },\\
\mathfrak{L}_{ 4}^{0 },\mathfrak{L}_{ 4}^{1 },\mathfrak{L}_{4 }^{2 },\\
\mathfrak{L}_{ 5}^{0 },\mathfrak{L}_{ 5}^{1 },\mathfrak{L}_{5 }^{2 },\mathfrak{L}_{ 5}^{3 },\\
\mathfrak{L}_{ 6}^{0 },\mathfrak{L}_{ 6}^{1 },\mathfrak{L}_{ 6}^{2 },\mathfrak{L}_{ 6}^{3},\\
\mathfrak{L}_{ 6}^{5 },\mathfrak{L}_{ 6}^{7 },\mathfrak{L}_{ 6}^{10 },\mathfrak{L}_{ 6}^{11 },\\
\mathfrak{L}_{ 7}^{0 }.
\end{array}$
}
& \multirow{8}{*}{$\beta$} & \multirow{7}{*}{$\mathfrak{a}_{3}$} \\
&                                    &                                       \\
&                                    &                                       \\
&                                    &                                       \\
&                                    &                                       \\
&                                    &                                       \\
&                                    &                                       \\
&                                    &                                       \\
\hline
\multirow{4}{*}{$
\begin{array}{l}
\mathfrak{L}_{ 1 }^{ 3},\mathfrak{L}_{ 1 }^{4 },\mathfrak{L}_{ 1 }^{6 },\\
\mathfrak{L}_{ 4 }^{ 4}(\lambda),\mathfrak{L}_{ 4 }^{ 6 }(\lambda).
\end{array}$
}
& $\beta =  1$ & $\mathfrak{r}_{3,1}$ \\
\cdashline{2-3}
& $\beta =  0$ & $\mathfrak{r}_{2} \times \mathbb{C}$ \\
\cdashline{2-3}
& $\beta = -1$ & $\mathfrak{r}_{3,-1}$ \\
\cdashline{2-3}
& $\beta(\beta^2-1) \neq 0$ & $\mathfrak{r}_{3,z}$ \\
\hline
\multirow{2}{*}{$
\begin{array}{l}
\mathfrak{L}_{2}^{3}(\lambda),\mathfrak{L}_{2}^{ 5}(\lambda),\\
\mathfrak{L}_{3}^{2},\mathfrak{L}_{3}^{3}.
\end{array}$
}
&$\beta = -1$  & $\mathfrak{a}_{3}$\\
\cdashline{2-3}
&$\beta\neq -1$& $\mathfrak{n}_{3}$\\
\hline
\multirow{4}{*}{$
\begin{array}{l}
\mathfrak{L}_{2}^{4}(\lambda),\mathfrak{L}_{2}^{6}(\lambda),\\
\mathfrak{L}_{5}^{6}(z,\lambda),\mathfrak{L}_{5}^{9}(z,\lambda),\\
\mathfrak{L}_{6}^{6}(\lambda),\mathfrak{L}_{6}^{9}(\lambda).
\end{array}$
}
&$\beta = 1$ & $\mathfrak{r}_{3}$\\
\cdashline{2-3}
&$\beta = 0$ & $\mathfrak{r}_{2}\times \mathbb{C}$\\
\cdashline{2-3}
&$\beta = -1$& $\mathfrak{r}_{3,-1}$\\
\cdashline{2-3}
&$\beta(\beta^2-1)\neq0$& $\mathfrak{r}_{3,\widetilde{z}}$\\
\hline
\multirow{2}{*}{$
\begin{array}{l}
\mathfrak{L}_{4}^{3},\mathfrak{L}_{4}^{5},\\
\mathfrak{L}_{7}^{1}
\end{array}$
}
&$\beta = 1$& $\mathfrak{a}_{3}$ \\
\cdashline{2-3}
&$\beta\neq1$& $\mathfrak{n}_{3}$\\
\hline
\multirow{2}{*}
{
$\mathfrak{L}_{5}^{4},\mathfrak{L}_{5}^{8}.$
}
&$\beta=-z$& $\mathfrak{a}_{3}$\\
\cdashline{2-3}
&$\beta\neq-z$& $\mathfrak{n}_{3}$\\
\hline
\multirow{2}{*}
{
$\mathfrak{L}_{5}^{5},\mathfrak{L}_{5}^{7}$.
}
&$\beta=-1/z$& $\mathfrak{a}_{3}$\\
\cdashline{2-3}
&$\beta\neq-1/z$&$\mathfrak{n}_{3}$\\
\hline
\multirow{2}{*}
{
$\mathfrak{L}_{6}^{4},\mathfrak{L}_{6}^{8}.$
}
&$\beta =0$& $\mathfrak{a}_{3}$\\
\cdashline{2-3}
&$\beta\neq0$& $\mathfrak{n}_{3}$\\
\hline
\multirow{1}{*}
{
$\mathfrak{L}_{6}^{5},\mathfrak{L}_{6}^{7}.$
}
&$\beta$& $\mathfrak{n}_{3}$\\
\hline
\multirow{1}{*}
{
$\mathfrak{L}_{6}^{10},\mathfrak{L}_{6}^{11}.$
}
&$\beta$& $\mathfrak{r}_{2}\times\mathbb{C}$\\
\hline
\multirow{2}{*}
{
$\mathfrak{L}_{6}^{12},\mathfrak{L}_{6}^{13}.$
}
&$\beta =0$& $\mathfrak{r}_{2} \times \mathbb{C}$\\
\cdashline{2-3}
&$\beta\neq0$& NoLie\\
\hline
\multirow{2}{*}
{
$\mathfrak{L}_{7}^{2}.$
}
&$\beta =1$& $\mathfrak{a}_{3}$\\
\cdashline{2-3}
&$\beta\neq-1$& $\mathfrak{r}_{3,-1}$\\
\hline
\end{tabular}
\caption{$\phi_{\beta}$-invariant of the hom-Lie algebras in Section \ref{homLie3}}
\label{tab:table3}
\end{center}
\end{table}

\begin{table}[!!h]
\begin{center}
\begin{tabular}{| m{2.725cm} |  m{1cm} | }
\hline
                                        & $\rho   $  \\
\hline
\multirow{7}{*}{$
\begin{array}{l}
\mathfrak{L}_{1}^{0},\mathfrak{L}_{1}^{1}, \mathfrak{L}_{1}^{2}, \mathfrak{L}_{1}^{5},\\
\mathfrak{L}_{2}^{0},\mathfrak{L}_{2}^{1},\mathfrak{L}_{2}^{2},\\
\mathfrak{L}_{3}^{0},\mathfrak{L}_{3}^{1},\\
\mathfrak{L}_{4}^{0},\mathfrak{L}_{4}^{1},\mathfrak{L}_{4}^{2},\\
\mathfrak{L}_{5}^{0},\mathfrak{L}_{5}^{1},\mathfrak{L}_{5}^{2},\mathfrak{L}_{5}^{3},\\
\mathfrak{L}_{6}^{0},\mathfrak{L}_{6}^{1},\mathfrak{L}_{6}^{2},\mathfrak{L}_{6}^{3},\\
\mathfrak{L}_{6}^{5},\mathfrak{L}_{6}^{7},\mathfrak{L}_{6}^{10},\mathfrak{L}_{6}^{11},\\
\mathfrak{L}_{7}^{0}.
\end{array}$
}
& \multirow{7}{*}{$ \mathfrak{a}_{3}$} \\
& \\
& \\
& \\
& \\
& \\
& \\
& \\
\hline
\multirow{5}{*}{$
\begin{array}{l}
 \mathfrak{L}_{1}^{3},  \mathfrak{L}_{1}^{4},  \mathfrak{L}_{1}^{6}, \\
 \mathfrak{L}_{2}^{4},  \mathfrak{L}_{2}^{6},\\
 \mathfrak{L}_{4}^{4},   \mathfrak{L}_{4}^{6}, \\
 \mathfrak{L}_{5}^{6},  \mathfrak{L}_{5}^{9}, \\
  \mathfrak{L}_{6}^{6},  \mathfrak{L}_{6}^{9},  \mathfrak{L}_{6}^{13}.
\end{array}$
}
& \multirow{5}{*}{$ \mathfrak{r}_{2} \times \mathbb{C}$} \\
& \\
& \\
& \\
& \\
\hline
\multirow{5}{*}{$
\begin{array}{l}
 \mathfrak{L}_{2}^{3}, \mathfrak{L}_{2}^{5},  \\
  \mathfrak{L}_{3}^{2},  \mathfrak{L}_{3}^{3}, \\
   \mathfrak{L}_{4}^{3},  \mathfrak{L}_{4}^{5}, \\
    \mathfrak{L}_{5}^{4},  \mathfrak{L}_{5}^{5},  \mathfrak{L}_{5}^{7},  \mathfrak{L}_{5}^{8}, \\
     \mathfrak{L}_{6}^{4},  \mathfrak{L}_{6}^{8}, \mathfrak{L}_{6}^{12},\\
      \mathfrak{L}_{7}^{1},
\end{array}$
}
& \multirow{5}{*}{$ \mathfrak{n}_{3}$} \\
& \\
& \\
& \\
& \\
& \\
\hline\multirow{1}{*}{$
\begin{array}{l}
 \mathfrak{L}_{7}^{2}
\end{array}$
}
& \multirow{1}{*}{$ \mathfrak{r}_{3,-1}$} \\
\hline
\end{tabular}
\caption{$\rho$-invariant of the hom-Lie algebras in Section \ref{homLie3}}
\label{tab:table4}
\end{center}
\end{table}

\begin{landscape}
\begin{center}

\begin{figure}[h!]
\includegraphics[width=\paperwidth]{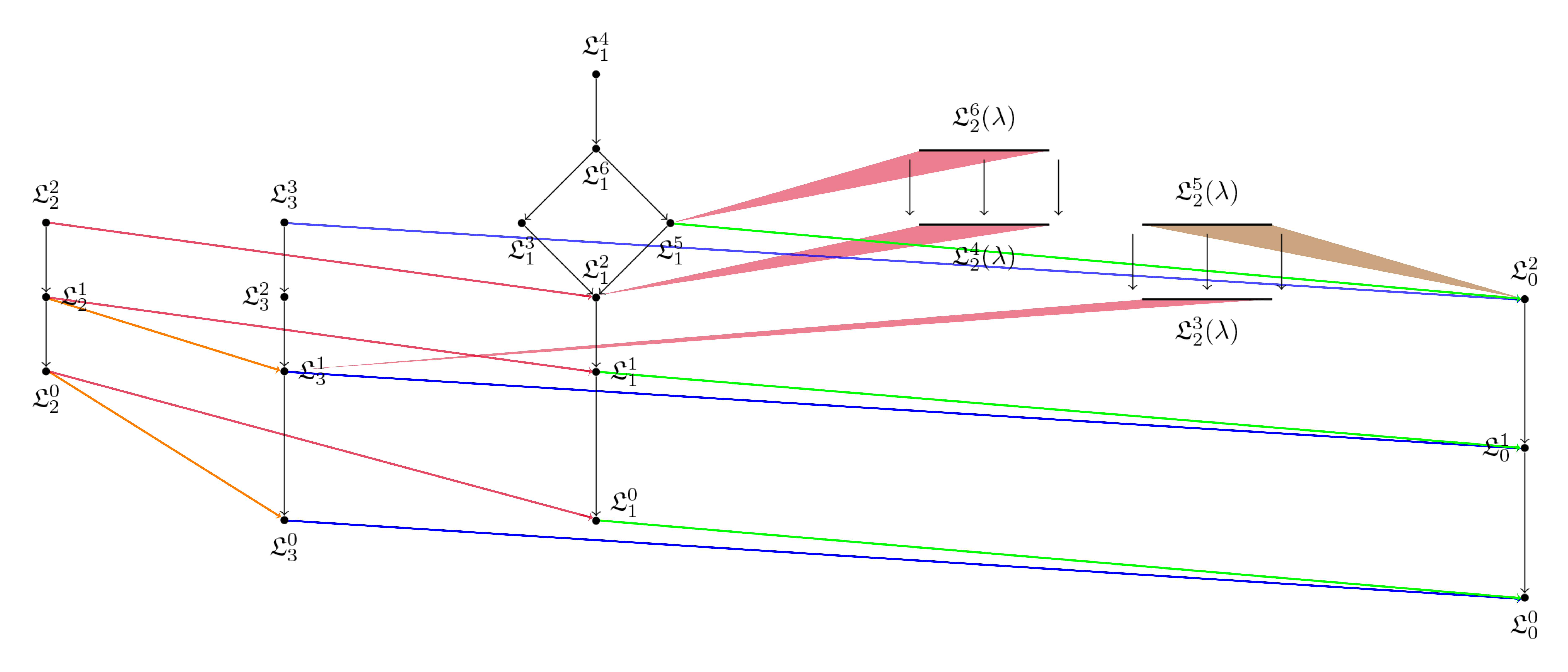}
\caption{Degenerations of hom-Lie algebras with underlying algebra isomorphic to $\mathfrak{r}_{3}(\mathbb{C})$, $\mathfrak{r}_{3,1}(\mathbb{C})$, $\mathfrak{h}_{3}(\mathbb{C})$
or $\mathfrak{a}_{3}$, and nilpotent twisting map.}
\end{figure}

\begin{figure}[h!]
\includegraphics[width=\paperwidth]{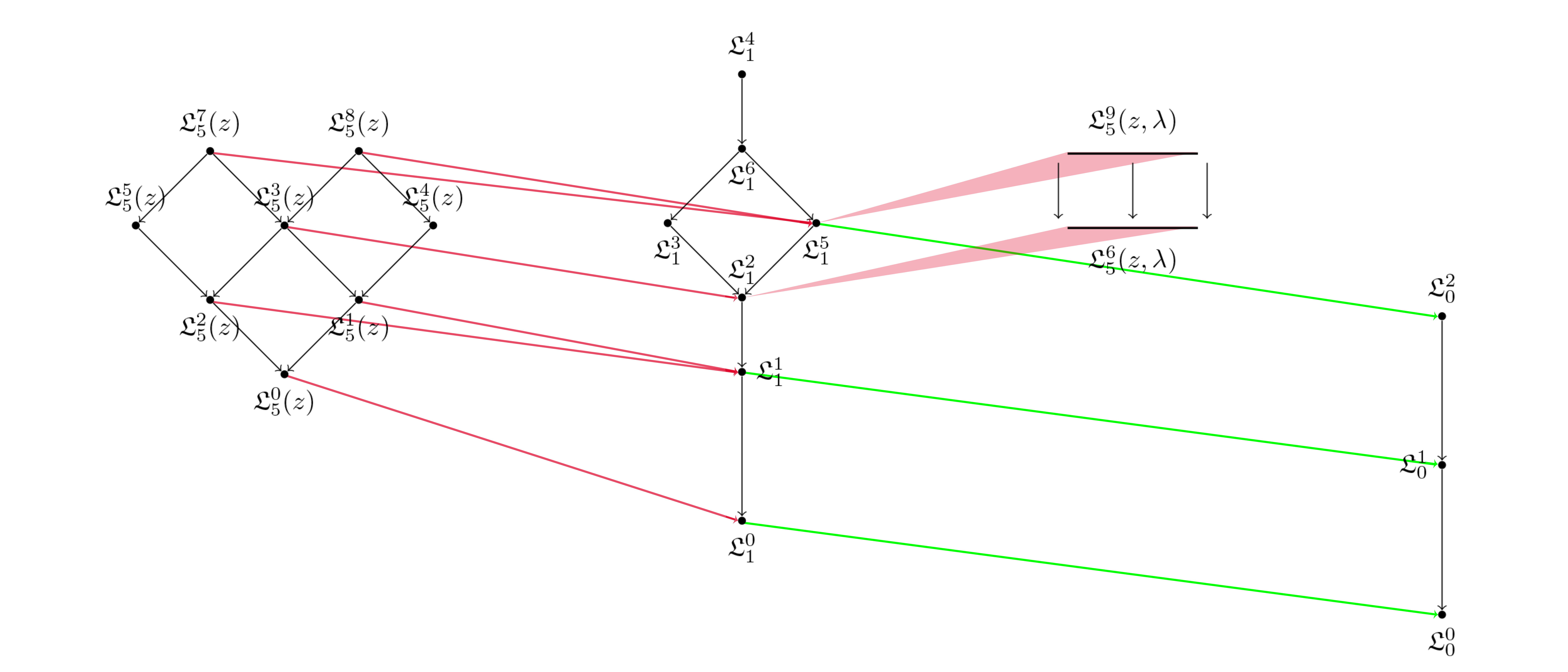}
\caption{Degenerations of hom-Lie algebras with underlying algebra isomorphic to $\mathfrak{r}_{3,z}(\mathbb{C})$, $\mathfrak{h}_{3}(\mathbb{C})$ or $\mathfrak{a}_{3}$, and nilpotent twisting map.}
\end{figure}

\begin{figure}[h!]
\includegraphics[width=\paperwidth]{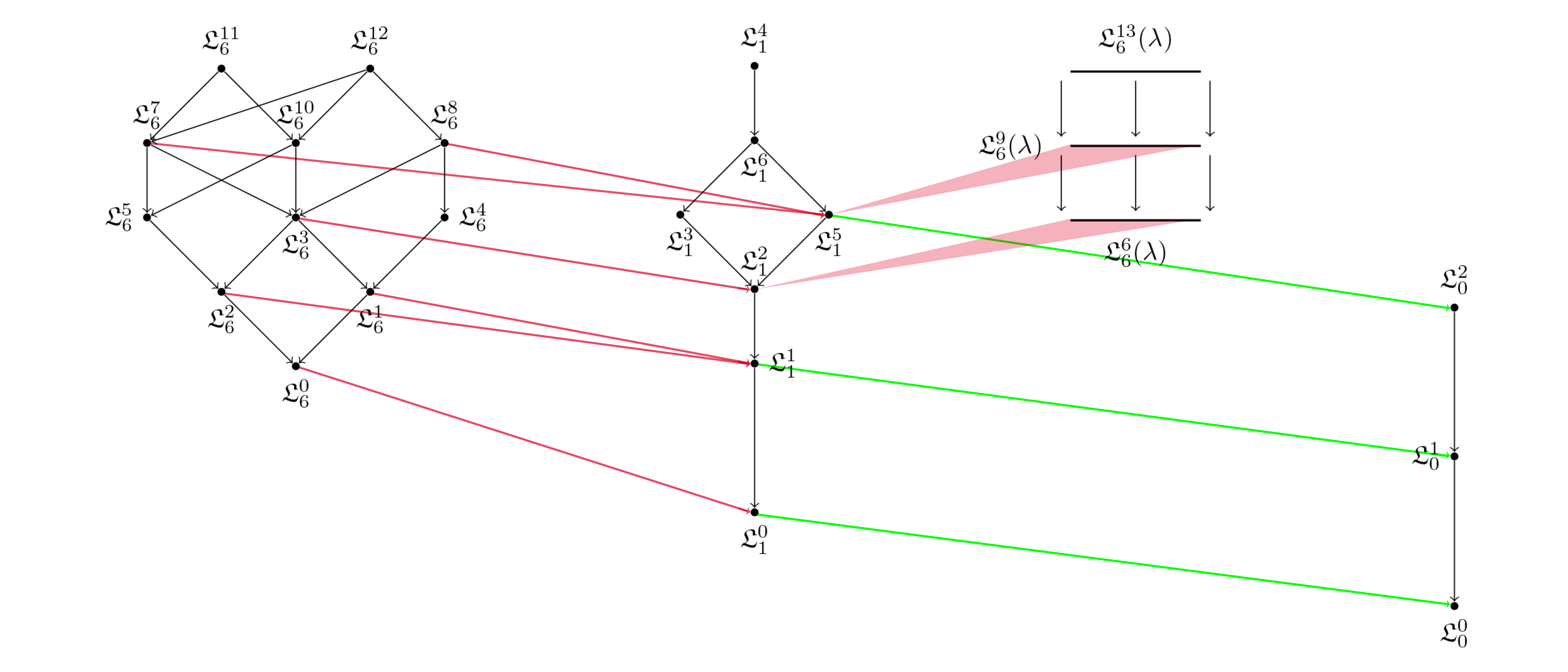}
\caption{Degenerations of hom-Lie algebras with underlying algebra isomorphic to $\mathfrak{r}_{2}(\mathbb{C}) \times \mathbb{C}$, $\mathfrak{h}_{3}(\mathbb{C})$ or $\mathfrak{a}_{3}$, and nilpotent twisting map.}
\end{figure}

\begin{figure}[h!]
\includegraphics[width=\paperwidth]{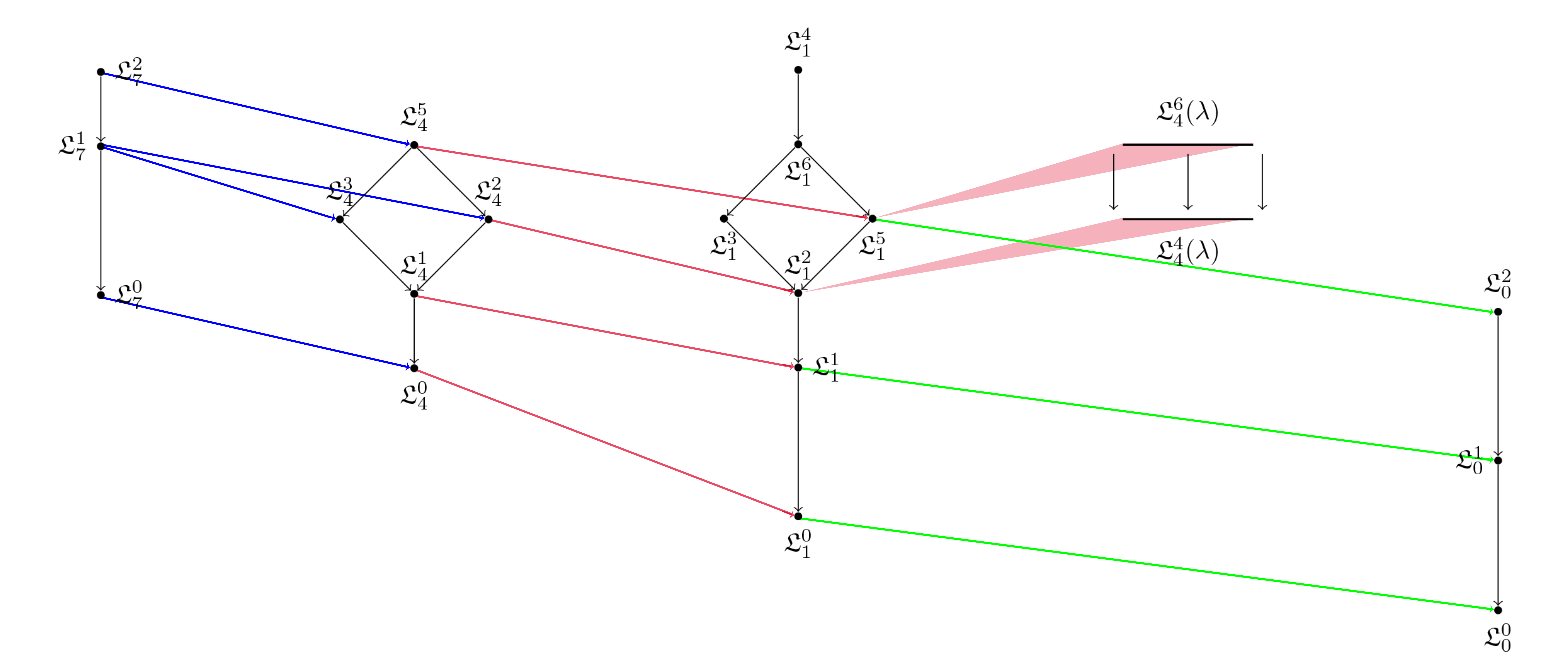}
\caption{Degenerations of hom-Lie algebras with underlying algebra isomorphic to a unimodular Lie algebra and nilpotent twisting map.}
\end{figure}

\end{center}
\end{landscape}

{
\clearpage

}

\includepdf[pages=-]{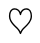}

\end{document}